\theoremstyle{plain}
\newtheorem{theorem}{Theorem}[section]
\newtheorem{proposition}[theorem]{Proposition}
\newtheorem{lemma}[theorem]{Lemma}
\newtheorem{corollary}[theorem]{Corollary}
\theoremstyle{definition}
\newtheorem{definition}[theorem]{Definition}
\theoremstyle{remark}
\newtheorem{remark}[theorem]{Remark}
\newtheorem{example}[theorem]{Example}
\newcommand{\ovl}{\overline}
\newcommand{\Kern}{\mathrm{Ker}}
\newcommand{\cone}{\mathrm{cone}}
\renewcommand{\lim}{\mathrm{lim}}
\newcommand{\colim}{\mathrm{colim}}
\newcommand{\per}{\mathsf{per}}
\newcommand{\tw}{\mathsf{tw}}
\newcommand{\Tw}{\mathsf{Tw}}
\newcommand{\Free}{\mathsf{Free}}
\newcommand{\Hom}{\mathrm{Hom}}
\newcommand{\RHom}{\mathrm{RHom}}
\newcommand{\Def}{\mathrm{Def}}
\newcommand{\embr}{\mathrm{embr}}
\newcommand{\Ob}{\mathrm{Ob}}
\newcommand{\Z}{\mathbb{Z}}
\newcommand{\N}{\mathbb{N}}
\newcommand{\AAA}{\mathfrak{a}}
\newcommand{\BBB}{\mathfrak{b}}
\newcommand{\CCC}{\mathfrak{c}}
\newcommand{\CC}{\mathbf{C}}
\newcommand{\Mod}{\ensuremath{\mathsf{Mod}} }
\newcommand{\lra}{\longrightarrow}
\title{The curvature problem for formal and infinitesimal deformations}
\author{Wendy Lowen} 
\address[Wendy Lowen]{Departement Wiskunde-Informatica, Universiteit Antwerpen, Middelheimcampus,
Middelheimlaan 1,
2020 Antwerp, Belgium}
\email{wendy.lowen@uantwerpen.be}
\author{Michel Van den Bergh} 
\address[Michel Van den Bergh]{Departement WNI, Universiteit Hasselt,
3590 Diepenbeek, Belgium}
\email{michel.vandenbergh@uhasselt.be}
\thanks{The author acknowledges the support of the European Union for the ERC grant No 257004-HHNcdMir and the support of the Research Foundation Flanders (FWO) under Grant No. G.0112.13N.}
\thanks{The second author is a director of research at the Fund for Scientific Research, Flanders}
\begin{document}
\maketitle

\begin{abstract}
We interpret all Maurer-Cartan elements in the formal Hochschild complex of a small dg category which is cohomologically bounded above in terms of torsion Morita deformations. This solves the \emph{curvature problem}, i.e. the phenomenon that such Maurer-Cartan elements naturally parameterize \emph{curved} $A_{\infty}$-deformations. In the infinitesimal setup, we show how $n+1$-th order curved deformations give rise to $n$-th order uncurved Morita deformations.
\end{abstract}

\section{Introduction}

Let $k$ be a field. For flexibility with the Maurer Cartan formalism, we assume that $k$ is of characteristic zero.
Let $A_0 = (A_0, \mu^{(0)})$ be a dg $k$-algebra with Hochschild complex $\CC(A_0)$. It is well known that the shifted Hochschild complex $\Sigma \CC(A_0)$ has the structure of a $B_{\infty}$-algebra, in particular it is a dg Lie algebra \cite{getzlerjones}.

Consider
$$\mathrm{MC}(A_0; k[[t]]) = \{ \phi \in \Sigma\CC(A_0)[[t]]^1 \,\,| \,\, [\mu^{(0)}, \phi t] + \frac{1}{2}[\phi t,\phi t] = 0 \}/ \sim,$$
the set of elements $\phi$ of shifted degree $1$ for which $\phi t \in \Sigma\CC(A_0)[[t]]$ is a solution of the Maurer Cartan equation up to gauge equivalence of the elements $\phi t$. 
If $A_0$ is an ordinary $k$-algebra, there is a bijection
\begin{equation}\label{eqalg}
\Def_{\mathrm{alg}}(A_0; k[[t]]) \cong \mathrm{MC}(A_0; k[[t]])
\end{equation}
for the set $\Def_{\mathrm{alg}}(A_0; k[[t]])$ of formal algebra deformations of $A_0$ up to equivalence of deformations. Under this bijection, a representative $\phi = \sum_{k = 0}^{\infty} \phi^{(k)}t^k$ on the right hand side corresponds to the formal deformation $A = A_0[[t]]$ endowed with the multiplication $\mu = \mu^{(0)} + \phi t$ on the left hand side. 
Let us now look at a dg $k$-algebra $A_0$. In this case, inspection of the Hochschild complex shows that $\phi \in \CC^2(A_0)[[t]]$ corresponds to
$$(\phi_n)_{n \geq 0} \in \prod_{n \geq 0} \Hom^{2 - n}_k(A^{\otimes n}, A)[[t]]$$
wih $\phi_n = \sum_{k = 0}^{\infty} \phi_n^{(k)}t^k$. The corresponding deformed structure $\mu = \mu^{(0)} + \phi t$ on $A = A_0[[t]]$ is naturally a \emph{curved} $A_{\infty}$-algebra ($cA_{\infty}$-algebra) structure, with the curvature given by $$\mu_0 = \phi_0 t \in (A_0[[t]])^2.$$
In particular, the differential $\mu_1 = \mu^{(0)}_1 + \phi_1t$ does not square to zero, but instead satisfies the identity
$$\mu_1^2 + \mu_2(\mu_0, -) + \mu_2(-, \mu_0) = 0.$$
This places the object $(A, \mu)$ somewhat outside the classical setup of homological algebra, as an object with no cohomology groups, and no classical derived category. We refer to the fact that $\mathrm{MC}(A_0; k[[t]])$ parameterizes $cA_{\infty}$-deformations rather than dg or $A_{\infty}$-deformations as the \emph{curvature problem}. To solve the curvature problem, we try to find an alternative type of deformations, which can be understood within the realm of classical homological algebra and for which we obtain a bijection replacing \eqref{eqalg}.

To find such a type of deformations, we look at the natural invariance properties of the Hochschild complex. Let $A_0$ and $B_0$ be dg $k$-algebras and let $M_0$ be a Morita $A_0$-$B_0$-bimodule, that is, a bimodule inducing an equivalence of categories $- \otimes^L M_0: D(A_0) \lra D(B_0)$. In \cite{keller}, from the bimodule $M_0$, Keller constructs an isomorphism 
\begin{equation}\label{keller}
\Gamma_{M_0}: \Sigma\CC(A_0) \lra \Sigma \CC(B_0)
\end{equation}
in the homotopy category of $B_{\infty}$-algebras. Interestingly, this isomorphism does not restrict to the kernels of the respective characteristic curvature projections 
$$c_{A_0}: \Sigma \CC(A_0) \lra \Sigma A_0: \phi = (\phi_n)_{n \geq 0} \longmapsto c_{A_0}(\phi) = \phi_0$$
and $c_{B_0}$. In other words, it is possible that $\phi \in \Sigma\CC(A_0)$ has $c_{A_0}(\phi) \neq 0$ but $\Gamma_{M_0}(\phi)$ has $c_{B_0}(\Gamma_{M_0}(\phi)) = 0$. This creates the possibility of getting rid of curvature, up to changing to a Morita equivalent dg algebra.

By definition, a formal Morita deformation of $A_0$ consists of dg algebra $B_0$, an $A_0$-$B_0$-Morita bimodule $M_0$, and a formal dg deformation $(B = B_0[[t]], \mu)$ of $\mu^{(0)}$. Using \eqref{keller}, we obtain a natural map from formal Morita deformations of $A_0$ to $\mathrm{MC}(A_0; k[[t]])$. To make this map injective, for technical reasons we have to consider Morita deformations up to \emph{torsion} Morita equivalence. The relevant notion of torsion derived category, which is used in order to ensure the validity of a derived version of Nakayama's Lemma in the formal deformation framework, is developed in \S \ref{parpartor}. We thus arrive at an injective map
\begin{equation}\label{thetaintro}
\Theta: \Def_{\mathrm{tMor}}(A_0; k[[t]]) \lra \mathrm{MC}(A_0; k[[t]])
\end{equation}
replacing \eqref{eqalg}. Our main result, stated more generally in Theorem \ref{thmmain} for small dg categories, is the following:

\begin{theorem} \label{thmintro}
Let $A_0$ be a dg $k$-algebra.
Consider $\phi = (\phi_n)_{n \geq 0} \in \mathrm{MC}(A_0; k[[t]])$.  If $\phi^{(0)}_0$ is nilpotent in $H^{\ast}A_0$, then $\phi$ is in the image of $\Theta$.
\end{theorem}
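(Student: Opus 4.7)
The strategy is to exchange the curved $A_\infty$-deformation $(A, \mu) = (A_0[[t]], \mu^{(0)} + \phi t)$ represented by $\phi$ for an honest uncurved formal dg deformation $B = B_0[[t]]$ of a dg algebra $B_0$ Morita equivalent to $A_0$, and then to use the naturality of Keller's isomorphism \eqref{keller} to verify that the resulting torsion Morita deformation is sent back to $\phi$ by $\Theta$. The mechanism that removes the curvature is the passage to interior locally nilpotent twisted objects $\Twilnil(A)$: an object is a pair $(X, \delta)$ with $\delta$ a twisting cochain solving a Maurer-Cartan-type equation in which the curvature $\mu_0 = \phi_0 t$ is absorbed, and the interior nilpotence condition on $\delta$ is precisely what forces $\Twilnil(A)$ to be an (uncurved) dg category. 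Because $\mu_0$ sits in positive $t$-adic filtration and $\phi_0^{(0)}$ is nilpotent in $H^{\ast}A_0$, trivializations of the curvature genuinely exist inside $\Twilnil(A)$.

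Having built $\Twilnil(A)$, the next step is to identify it as a formal deformation of a dg category Morita equivalent to $A_0$. Reduction modulo $t$ yields $\Twilnil(A_0)$, which for the uncurved algebra $A_0$ is just a variant of the category of twisted complexes and hence Morita equivalent to $A_0$ itself. One then selects a compact generator $P \in \Twilnil(A)$ whose image $P_0 = P/tP$ generates the torsion derived category of $A_0$ in the sense of \S\ref{parpartor}. Setting $B = \mathrm{End}_{\Twilnil(A)}(P)$ and $B_0 = \mathrm{End}_{\Twilnil(A_0)}(P_0)$, the derived Nakayama lemma in the torsion framework guarantees that $B$ is $t$-adically complete, flat over $k[[t]]$, and reduces to $B_0$. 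The object $P_0$ then serves as the Morita $A_0$-$B_0$-bimodule $M_0$, so that the triple $(B_0, M_0, B)$ defines the promised element of $\Def_{\mathrm{tMor}}(A_0; k[[t]])$, and a direct unwinding of the naturality of \eqref{keller} applied to $M_0 = P_0$ identifies the induced Maurer-Cartan class on the $A_0$ side with $\phi$.

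The main obstacle lies in controlling generation across the deformation: even defining compact generators inside $\Twilnil(A)$ is delicate because $A$ is curved and has no ordinary derived category. Here the torsion derived category developed in \S\ref{parpartor}, together with the $t$-adic completeness inherited from the interior nilpotence condition, supplies the derived Nakayama lemma that reduces generation at the deformed level to the classical statement modulo $t$. The hypothesis that $\phi_0^{(0)}$ is nilpotent in $H^{\ast}A_0$ enters at two decisive points: first, to ensure that curvature-trivializing twists actually live inside $\Twilnil$ rather than violating the interior nilpotence condition; and second, so that the lifted generator $P$ is compatible with the Morita structure at $t = 0$. Without this nilpotence the category $\Twilnil(A)$ need not be rich enough to detect $\phi$, and the whole construction collapses.
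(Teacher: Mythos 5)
Your overall strategy---replace the curved deformation by an uncurved dg deformation of the endomorphism algebra of a suitable twisted object, then transport the Maurer--Cartan class back along a Morita equivalence---is indeed the paper's strategy, but the step that actually does the work is missing from your write-up, and the category you propose to work in cannot supply it. The heart of the paper's proof is one explicit object: the two-sided cone $A_t = A\oplus\Sigma^{-1}A$ with connection $\delta_{A_t}=\begin{pmatrix} 0 & \phi_0\\ -t\,1_A & 0\end{pmatrix}$, whose curvature is shown to vanish (Propositions \ref{key} and \ref{propformdef}) by a direct computation: strict unitality kills all contributions $\eta_n$ with $n\geq 3$, the terms $c_0$ and $c_2$ cancel because $t\cdot\phi_0=\mu_0$, and regularity of $t$ on $\Hom^3$ forces the remaining entry $\eta_1(\phi_0)$ to be zero. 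This connection is \emph{not} upper triangular: it has a nonzero lower-left entry $-t\,1_A$, which is allowable not because of any nilpotence but because it is a multiple of the strict unit (Proposition \ref{propfg}). Your $\Twilnil(A)$ of ``interior locally nilpotent'' (i.e.\ upper triangular, Example \ref{exiln}) twisted objects cannot contain an uncurved object built from copies of $A$: for an upper triangular connection the curvature matrix is again upper triangular with the curvatures of the free summands on its diagonal (Example \ref{excone}), and $c_A=\phi_0 t\neq 0$ in general, so no upper-triangular twist removes it. The assertion that ``trivializations of the curvature genuinely exist inside $\Twilnil(A)$'' is therefore false as stated, and it is in any case precisely the nontrivial claim that the proof must establish by an explicit construction; the paper stresses that the allowability condition needed for the two-sided cone is strictly weaker than upper triangularity.

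You also misplace the role of the hypothesis that $\phi_0^{(0)}$ is nilpotent in $H^{\ast}A_0$. It plays no role in removing curvature: that step uses only that $\mu_0=\phi_0 t$ is divisible by the regular element $t$ of $k[[t]]$. The nilpotence is needed on the special fibre, to guarantee that the reduction $A'=\cone(\phi_0^{(0)})$ of $A_t$ modulo $t$ is still Morita equivalent to $A_0$ (Lemma \ref{lemnilp}, via the octahedral axiom); when $\phi_0^{(0)}$ is invertible, as for the graded field in \S\ref{pargraded}, $A'$ is null-homotopic and the construction degenerates. Finally, the torsion derived category and the derived Nakayama lemma are used in the paper to prove \emph{injectivity} of $\Theta$ (Theorem \ref{thmmain}(1), via Proposition \ref{proptorlift}), not to produce the preimage of $\phi$; for the surjectivity statement at hand one simply takes $B$ to be a cofibrant replacement of $\mathrm{End}(A_t)$, observes $k\otimes_R B\simeq \mathrm{End}(A')$, and invokes the Morita equivalence $A'\sim A_0$.
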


\begin{corollary}
Let $A_0$ be a dg $k$-algebra which is cohomologically bounded above. The map $\Theta$ is bijective.
\end{corollary}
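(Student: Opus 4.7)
The plan is to reduce immediately to Theorem~\ref{thmintro} by verifying that every Maurer--Cartan element satisfies its hypothesis. Since the map $\Theta$ in \eqref{thetaintro} is already known to be injective, only surjectivity needs to be addressed. So I would take an arbitrary $\phi = (\phi_n)_{n \geq 0} \in \mathrm{MC}(A_0; k[[t]])$ and aim to show that the class $[\phi_0^{(0)}] \in H^{\ast}A_0$ is nilpotent.

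The first step is to check that $\phi_0^{(0)}$ is actually a cocycle in $A_0$. Expanding the Maurer--Cartan equation $[\mu^{(0)}, \phi t] + \tfrac{1}{2}[\phi t, \phi t] = 0$ in powers of $t$, the coefficient of $t^1$ reads $[\mu^{(0)}, \phi^{(0)}] = 0$, so $\phi^{(0)}$ is a Hochschild cocycle. I then project onto the arity-$0$ component. The Hochschild differential $[\mu^{(0)}, -]$ applied to an arity-$0$ element $\phi_0^{(0)} \in A_0^2$ and projected back to arity $0$ reduces to the intrinsic differential $\mu^{(0)}_1$ of $A_0$: the only way to produce an arity-$0$ output from an arity-$0$ input is via $\mu^{(0)}_1 \circ \phi_0^{(0)}$, since the Gerstenhaber composition with any $\mu^{(0)}_n$ for $n \geq 2$ lands in arity $\geq n - 1 \geq 1$. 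Thus $\phi_0^{(0)} \in A_0^2$ is an honest cocycle and defines a class $[\phi_0^{(0)}] \in H^2(A_0)$.

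The second step is a pure degree count, using the boundedness hypothesis. There exists $N$ with $H^n A_0 = 0$ for $n > N$. The class $[\phi_0^{(0)}]$ sits in strictly positive degree $2$, so its $n$-th power lies in $H^{2n}A_0$, which vanishes as soon as $2n > N$. Hence $[\phi_0^{(0)}]$ is nilpotent in the graded algebra $H^{\ast}A_0$, and Theorem~\ref{thmintro} yields $\phi \in \mathrm{Im}(\Theta)$.

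The argument is short because Theorem~\ref{thmintro} does the heavy lifting; the only potential subtlety is the arity-$0$ bookkeeping in the first step, which comes down to the observation that inserting an arity-$0$ element into $\mu^{(0)}_n$ for $n \geq 2$ strictly raises arity, so the cocycle condition at arity $0$ coincides with being a cocycle for the underlying dg algebra.
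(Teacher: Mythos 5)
Your proposal is correct and follows essentially the same route as the paper: the corollary is the special case of Theorem~\ref{thmmain}(3), whose proof is ``immediate from (1) and (2)'' precisely because a class in $H^{2}A_0$ has its $n$-th power in $H^{2n}A_0$, which vanishes for large $n$ when cohomology is bounded above. Your two supporting checks --- that the arity-$0$ component $\phi_0^{(0)}$ is a genuine cocycle (using that $\mu^{(0)}_0=0$ and that braces with $\mu^{(0)}_n$, $n\geq 2$, raise arity) and the degree count --- are exactly the details the paper leaves implicit, and both are carried out correctly.
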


The necessity of the condition in the theorem is shown by the example of the graded field due to Keller, which was previously considered in \cite{kellerlowen}. For $A_0 = k[x, x^{-1}]$ with $\deg(x) = 2$, the element $\phi = x \in \mathrm{MC}(A_0; k[[t]])$ is shown in \S \ref{pargraded} not to be in the image of $\Theta$. On the other hand, part (2) of the theorem covers the geometric case. More precisely, if $X$ is a quasi-compact quasi-separated scheme, it was shown in \cite{bondalvandenbergh} that the derived category $D_{\mathrm{Qch}}(X)$ is compactly generated by a single perfect complex $M$, and thus equivalent to the derived category $D(A_0)$ of $A_0 = \RHom(M,M)$. The dg algebra $A_0$ is cohomologically bounded above.

Our approach to Theorem \ref{thmintro} is based upon the construction of uncurved twisted objects over a $k[[t]]$-linear $cA_{\infty}$-algebra $A$.  In \S \ref{parpartwist}, we introduce the $cA_{\infty}$-category $\Tw(A)$ of twisted objects over $A$. An object of $\Tw(A)$ is a formal expression $M = \oplus_{i \in I} \Sigma^{n_i} A$ with $n_i \in \Z$, endowed with a connection $\delta_M \in \Hom^1(M,M)$. Here, $\Hom$-modules are defined in the obvious way as modules of (limits of) infinite column finite matrices with entries in $A$. The $cA_{\infty}$-structure on $\Tw(A)$ is given by
\begin{equation}\label{eqembrin}
\embr_{\delta}(\mu) = \sum_{n = 0}^{\infty} \mu\{ \delta^{\otimes n}\} = \mu + \mu\{\delta\} + \mu\{\delta, \delta\} + \dots
\end{equation}
for the brace algebra structure on the shifted Hochschild object $\Sigma \CC(\Tw(A))$. Here, the notation $\embr_{\delta}(\mu)$ stands for ``$\mu$ embracing $\delta$''.
Convergence of \eqref{eqembrin} is understood in a ``pointwise $t$-adic'' way, and in order to ensure convergence, we require connections to be \emph{locally $\mu$-allowable} (see Definition \ref{defnilp}). This condition is weaker than the usual upper triangular condition used in the definition of classical (uncurved) twisted objects over a dg or $A_{\infty}$-algebra.

Let $(A_0, \mu^{(0)})$ be a strictly unital $k$-linear $A_{\infty}$-algebra, and consider, for $\phi \in \mathrm{MC}(A_0; k[[t]])$, the $cA_{\infty}$-structures $\mu = \mu^{(0)} + \phi t$ on $A = A_0[[t]]$ and $\eta = \embr_{\delta}(\mu)$ on $\Tw(A)$ as in \eqref{eqembrin}. 
We may suppose that $\mu$, and hence also $\eta$, is strictly unital. The curvature of $(A, \mu)$ is given by
\begin{equation}\label{eqfact}
\mu_0 = \mu^{(0)}_0 + \phi_0 t = \phi_0 t.
\end{equation}
Our main technical tool is the construction of the following ``two-sided cone'' in $\Tw(A)$, associated to the factorization \eqref{eqfact} of $\mu_0$. We define $A_t = A \oplus \Sigma^{-1}A \in \Tw(A)$ to be endowed with the connection
$$\delta_{A_t} = \begin{pmatrix} 0 & \phi_0 \\ t 1_A & 0 \end{pmatrix}.$$
In Propositions \ref{key}, \ref{propformdef}, we show that the curvature $(\eta_0)_{A_t}$ of $A_t$ vanishes. The conditions in Theorem \ref{thmintro} further ensure that we can make $B = \Hom(A_t, A_t)$ into a Morita deformation of $A_0$, which is in the pre-image of $\phi$ under $\Theta$. The vanishing of $(\eta_0)_{A_t}$ heavily depends upon regularity of $R = k[[t]]$. 

Let us now consider the infinitesimal setup $R = k[t]/t^{n+1}$. For $\phi = \sum_{k = 0}^{n-1} \phi^{(k)}t^k \in \Sigma \CC(A_0)[t]/t^n$, we consider $\phi t = \sum_{k = 0}^{n-1} \phi^{(k)}t^{k+1} \in \Sigma \CC(A_0)[t]/t^{n+1}$.
 Put 
$$\mathrm{MC}(A_0; k[t]/t^{n+1}) = \{ \phi \in (\Sigma \CC(A_0)[t]/t^n)^1 \,\,| \,\, [\mu^{(0)}, \phi t] + \frac{1}{2}[\phi t,\phi t] = 0 \}/ \sim,$$
the set of elements $\phi$ of shifted degree $1$ for which $\phi t \in \Sigma \CC(A_0)[t]/t^{n+1}$ is a solution of the Maurer Cartan equation up to gauge equivalence of the elements $\phi t$. Let $\Def_{\mathrm{Mor}}(A_0; k[t]/t^{n+1})$ be the set of $n$-th order Morita deformations of $A_0$ up to Morita equivalence.
We obtain a map
\begin{equation}
\Theta_n: \Def_{\mathrm{Mor}}(A_0; k[t]/t^{n+1}) \lra \mathrm{MC}(A_0; k[t]/t^{n+1}).
\end{equation}
It was already noted in \cite{kellerlowen} that in general, the analogue of Theorem \ref{thmintro} does not hold for $\Theta_n$ (see Example \ref{example}). However, in this paper we prove a more refined result, stated more generally for dg categories in Theorem \ref{thmmaininf}. The theorem essentially says that $n+1$-th order curved deformations give rise to $n$-th order uncurved Morita deformations, and explains how the formal object $A_t$ defined earlier on in fact arises as a limit of infinitesimal objects:

\begin{theorem}\label{thmintro2}
Let $A_0$ be a dg $k$-algebra which is cohomologically bounded above.
\begin{enumerate}
\item The map $\Theta_n$ is injective.
\item If $\phi  \in \mathrm{MC}(\Sigma \CC(\AAA_0)[t]/t^{n+1})$ is in the image of $\mathrm{MC}(\Sigma \CC(\AAA_0)[t]/t^{n+2})$, then $\phi$ is in the image of $\Theta_n$.
\end{enumerate}
\end{theorem}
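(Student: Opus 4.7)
The plan is to reduce both parts to adaptations of the formal-case construction of Theorem \ref{thmintro}, with part (2) making essential use of one extra order of lifting to compensate for the non-regularity of the Artinian base ring.

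For part (1), I would follow the injectivity strategy behind Theorem \ref{thmmain}. If $(B, M)$ and $(B', M')$ are two $n$-th order Morita deformations with $\Theta_n(B, M)=\Theta_n(B', M')$, then Keller's $B_\infty$-isomorphism \eqref{keller} applied to $M$ and $M'$ identifies the deformed Hochschild classes of $B$ and $B'$ after pulling back along $\Gamma_{M_0}$. Transporting this identification back through Keller's map produces a Morita bimodule over $k[t]/t^{n+1}$ intertwining $(B, M)$ and $(B', M')$. Because $k[t]/t^{n+1}$ is Artinian, the derived Nakayama arguments of \S\ref{parpartor} reduce to their classical counterparts, so the passage through torsion Morita equivalence required in the formal setting is unnecessary and the argument closes with plain Morita equivalence.

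For part (2), let $\tilde\phi \in \mathrm{MC}(\Sigma \CC(\AAA_0)[t]/t^{n+2})$ lift $\phi$, set $\tilde R = k[t]/t^{n+2}$ and $R = k[t]/t^{n+1}$, and form the $cA_\infty$-algebra $(\tilde A, \tilde\mu)$ over $\tilde R$ with $\tilde\mu = \mu^{(0)} + \tilde\phi t$, so $\tilde\mu_0 = \tilde\phi_0 t$. Construct the two-sided cone $\tilde A_t = \tilde A \oplus \Sigma^{-1} \tilde A \in \Tw(\tilde A)$ with connection
$$\delta_{\tilde A_t} = \begin{pmatrix} 0 & \tilde\phi_0 \\ t\cdot 1_{\tilde A} & 0 \end{pmatrix}$$
exactly as in the formal case, and run the finite analogue of the calculation in Propositions \ref{key} and \ref{propformdef}. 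The braces $\tilde\mu_j\{\delta_{\tilde A_t}^{\otimes j}\}$ converge automatically, and the Maurer--Cartan equation for $\tilde\phi t$ combined with the factorization $\tilde\mu_0 = \tilde\phi_0 \cdot t$ forces $(\tilde\eta_0)_{\tilde A_t}$ to lie in $t^{n+1}\Hom^2(\tilde A_t, \tilde A_t)$. Reducing modulo $t^{n+1}$ kills this error and yields an uncurved twisted object $A_t \in \Tw(A)$, where $A = \tilde A \otimes_{\tilde R} R$. Setting $B = \Hom_{\Tw(A)}(A_t, A_t)$ with $M = A_t$ as an $A_0$-$B$-bimodule gives an $n$-th order Morita deformation of $A_0$; flatness over $R$ follows because $A_t$ is a direct sum of shifts of $A$ and $A$ is flat, and $\Theta_n(B, M) = \phi$ follows by naturality of Keller's map applied to $M = A_t$ as in the formal case.

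The main obstacle is the curvature-vanishing step. In the formal case it depends on $t$ being a non-zero-divisor in $k[[t]]$; over the Artinian ring $\tilde R$ the corresponding identity fails, but the failure is concentrated in coefficient degree $n+1$ in $t$, which is precisely the degree annihilated by the reduction $\tilde R \twoheadrightarrow R$. Making this precise requires a careful bookkeeping of the Stasheff identities for $\tilde\mu$ weight by weight in $t$, and is the technical heart of the argument. The same bookkeeping explains why the lifting hypothesis is indispensable: without it one would be trying to enforce a cancellation modulo $t^{n+1}$ using only information modulo $t^{n+1}$, the obstruction to which is what drives Example \ref{example}.
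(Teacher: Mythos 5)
Your overall route coincides with the paper's: part (1) is the arrow-category/Nakayama argument from the proof of Theorem \ref{thmmain}(1), with the torsion machinery replaced by classical Nakayama over the Artinian base, and part (2) is exactly the mechanism of Proposition \ref{propinfdef} --- form the two-sided cone over the order-$(n+1)$ lift, observe that the Stasheff identities for $\bar{\mu}$ confine the only possibly non-zero curvature component $\bar{\eta}_1(\frac{c_{\bar{M}}}{t})$ to the coefficient of $t^{n+1}$, and kill it by reducing modulo $t^{n+1}$. The computation you defer as ``careful bookkeeping'' is carried out in the paper by comparing the $t$-coefficients of $\bar{\eta}_1(\frac{c_{\bar{M}}}{t})$ with those of $0 = \bar{\eta}_1(c_{\bar{M}})$; your description of why it works, and of why the extra order of lifting is indispensable, is accurate.

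There is, however, one genuine gap: you never use the hypothesis that $A_0$ is cohomologically bounded above, so as written your part (2) would ``prove'' a false statement. The hypothesis enters exactly at the step you assert without justification, namely that $B = \Hom(A_t,A_t)$ with bimodule $A_t$ ``gives an $n$-th order Morita deformation of $A_0$''. For this one needs $k\otimes_R A_t = A' = \cone(\phi^{(0)}_0)$ to be a compact generator of $D(A_0)$, equivalently that $B_0 = \mathrm{End}(A')$ is Morita equivalent to $A_0$ via $A'$; this holds when the degree-two class $\phi^{(0)}_0$ is nilpotent in $H^{*}A_0$ (Lemma \ref{lemnilp}, via the octahedral-axiom argument of \cite{kellerlowen}), and nilpotency is precisely what boundedness above provides. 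Without it the construction collapses: for the graded field $A_0 = k[x,x^{-1}]$ of \S \ref{pargraded}, the element $\phi = x$ satisfies the Maurer--Cartan equation over every $k[t]/t^{m}$ and hence admits an order-$(n+2)$ lift, yet $\cone(x)$ is null-homotopic, $B_0$ is quasi-equivalent to zero, and $x$ is not in the image of $\Theta_n$. A secondary point: ``flatness over $R$'' is not the relevant issue for the derived interpretation --- degreewise freeness of $A_t$ is automatic, but over the non-regular ring $k[t]/t^{n+1}$ cofibrancy of the possibly unbounded complex $\tilde{A}_t$ can fail (Remark \ref{remint}); this, too, is settled by the boundedness hypothesis.
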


In other words, if the $n$-th order $cA_{\infty}$-deformation $\mu = \mu^{(0)} + \phi t$ of $\mu^{(0)}$ extends to an $n+1$-th order deformation $\bar{\mu} = \mu^{(0)} + \phi t + \phi^{(n+1)}t^{n+1}$, then the deformation $\mu$ can be replaced by an $n$-th order Morita deformation $B$ of $A_0$. 
As before, $B$ is obtained as $B = \Hom(A_{(n)}, A_{(n)})$ for an object $A_{(n)} \in \Tw(A, \mu)$ for $A = A_0[t]/t^{n+1}$. This time, we have to take $A_{(n)} = A \oplus \Sigma^{-1}A$ with 
$$\delta_{A_{(n)}} = \begin{pmatrix} 0 & \phi_0 + \phi^{(n+1)}_0 t^n \\ t 1_A & 0 \end{pmatrix},$$
whose curvature vanishes by Proposition \ref{propinfdef}.
Consequently, the twisted object $A_t$ which is constructed in the formal setup can be interpreted as a $t$-adic limit of the objects $A_{(n)}$, and similarly for the corresponding (torsion) Morita deformations. 

Finally, let us comment upon the relation with other work. 
Although the goal in this paper is independent of the question as to the definition of suitable alternative ``derived categories'' in various contexts, our work is indirectly related to this question. The torsion derived category which we use in \S \ref{parpartor} seems closely related to the derived category of a formal deformation of an algebroid stack over a smooth complex algebraic variety, which is considered and proven to be compactly generated by Petit in \cite{petit}. Futher, for a $k[[t]]$-linear $cA_{\infty}$-algebra $A$ as before, one may expect that a good alternative derived category should be such that the object $A_t$ we construct constitutes a compact generator of this category. This seems to be in accordance with the work of Positselski in \cite{positselski3}, although details remain to be elucidated. Eventually, we expect our setup to be naturally part of a richer picture including a suitable version of MGM duality \cite{alonsojeremiaslipman}, \cite{kashiwarashapira}, \cite{portashaulyekutieli}, \cite{PSY2}, \cite{positselski3}, \cite{positselski:contra}, \cite{positselski:MGM}.

Throughout the paper, little attention is given to the signs in formulas. Picking a consistent sign convention and viewing elements as living in appropriate shifts of complexes like the Hochschild complex, the reader will be able to make the signs more precise where necessary (see for instance \cite[\S 2.1]{lowen9}).

\vspace{0,5cm}

\noindent \emph{Acknowledgement.} 
The authors are grateful to Anthony Blanc, Jim Stasheff and Amnon Yekutieli for their interesting comments and help with references and typo's based upon a previous version of the paper.

\section{Curved $A_{\infty}$-categories and twisted objects}\label{parpartwist}

Let $R$ be a commutative ground ring, endowed with an ideal $T \subseteq R$ and the corresponding $T$-adic topology.
In this section, we introduce a certain construction of uncurved twisted objects over an $R$-linear $cA_{\infty}$-category $\AAA$. In \S \ref{partwist}, we introduce the $cA_{\infty}$-category $\Tw(\AAA)$ of twisted objects over $\AAA$. An object of $\Tw(\AAA)$ is a formal expression $M = \oplus_{i \in I} \Sigma^{n_i} A_i$ with $A_i \in \AAA$ and $n_i \in \Z$, endowed with a connection $\delta_M \in \Hom^1(M,M)$. Here, $\Hom$-modules are defined in the obvious way as modules of ($T$-adic limits of) infinite column finite matrices. The $cA_{\infty}$-structure on $\Tw(\AAA)$ is given by
\begin{equation}\label{eqembrpar}
\embr_{\delta}(\mu) = \sum_{n = 0}^{\infty} \mu\{ \delta^{\otimes n}\} = \mu + \mu\{\delta\} + \mu\{\delta, \delta\} + \dots
\end{equation}
for the brace algebra structure on the shifted Hochschild object $\Sigma \CC(\Tw(\AAA))$. Here, the notation $\embr_{\delta}(\mu)$ stands for ``$\mu$ embracing $\delta$''.
Obviously, we have to make sure that \eqref{eqembrpar} converges in an appropriate way. For our purpose, we consider ``pointwise $T$-adic'' convergence in \S \ref{parcompl}. In order to ensure this convergence, we require connections to be \emph{locally $\mu$-allowable} in the sense of Definition \ref{defnilp}. Even for $T = 0$, this condition is weaker than the usual upper triangular condition used in the definition of classical (uncurved) twisted objects over a dg or $A_{\infty}$-category, as discussed in Example \ref{example}.

Consider $M,N \in \Tw(\AAA)$, $f \in \Hom^1(M,N)$ and $g \in \Hom^1(N,M))$. In \S \ref{partwocone}, we discuss the possiblility of constructing a ``two-sided cone'' $\cone(f,g) = M \oplus N$ endowed with the connection
$$\delta_{\cone(f,g)} = \begin{pmatrix} \delta_M & f \\ g & \delta_N \end{pmatrix}$$
in $\Tw(\AAA)$.
In general, the construction of the two-sided cone faces convergence issues, but in \S \ref{parremcurv} it is used in an effective way to remove curvature. Precisely, if the curvature $c_M$ of an object $M \in \Tw(\AAA)$ can be written as $c_M = r \cdot \frac{c_M}{r}$ for a regular element $r \in R$, then we show that $\cone(\frac{c_M}{r}, r)$ is uncurved (Proposition \ref{key}). In \S \ref{parclosed}, a related construction is given which allows to construct closed elements (Proposition \ref{propcone}).

\subsection{Curved $A_{\infty}$-categories and twisted objects}\label{partwist}
In this section we briefly recall the notion of twisted objects over a $cA_{\infty}$-category. This notion generalizes the classical notion for dg categories from \cite{bondalkapranov}, which was generalized to the setup of $A_{\infty}$-categories in \cite{lefevre}, to the setup of $cA_{\infty}$-categories. For a more detailed treatment we refer to \cite{dedekenlowen2} and the references therein.

Let $R$ be a commutative ground ring. Throughout, $R$-modules will be $\Z$-graded $R$-modules.
Let $\AAA$ be an $R$-quiver, i.e. $\AAA$ consists of a set of objects $\Ob(\AAA)$ and for all $A, A' \in \Ob(\AAA)$, an $R$-module $\AAA(A,A')$.
The Hochschild object of $\AAA$ is the product total  $R$-module $\CC(\AAA)$ of the double $R$-module
$$\CC^{p,q}(\AAA) = \prod_{A_0, \dots, A_p \in \AAA} \Hom^q_R(\AAA(A_{p-1}, A_p) \otimes_R \dots \otimes_R \AAA(A_0, A_1), \AAA(A_0, A_p)).$$
In particular, we have
$$\CC^{0,\ast} = \prod_{A \in \AAA} \AAA(A,A).$$
An element $\phi \in \CC^n(\AAA)$ is given by  $\phi = (\phi_p)_{p \geq 0}$ with $\phi_p \in \CC^{p, n-p}(\AAA)$.
The $R$-module $\CC(\AAA)$ is endowed with brace operations
$$\begin{aligned} \CC^{p, \ast}(\AAA) \otimes \bigotimes_{i = 1}^k \CC^{p_i, \ast}(\AAA) & \lra \CC^{p + p_1 + \dots + p_k - k, \ast}(\AAA) \\
(\phi, \phi_1, \dots, \phi_k) & \longmapsto \phi \{ \phi_1, \dots \phi_k \}
\end{aligned}$$
with
$$\phi \{ \phi_1, \dots \phi_k \} = \sum \phi(1 \otimes \dots \otimes \phi_1 \otimes \dots \otimes \phi_i \otimes 1 \otimes \dots \otimes \phi_n \otimes \dots).$$
The first brace operation is the dot product
$$\begin{aligned} \CC^{n, \ast}(\AAA) \otimes \CC^{m, \ast}(\AAA) & \lra \CC^{n + m -1, \ast}(\AAA) \\
(\phi, \psi) & \longmapsto\phi \bullet \psi = \phi\{ \psi\} = \phi(1 \otimes \psi) + \phi(\psi \otimes 1).
\end{aligned}$$
The brace operations turn $\CC(\AAA)$ into a brace algebra, see \cite{lowen9}.

A \emph{$cA_{\infty}$-structure} on $\AAA$ is an element $\mu = (\mu_n)_{n \geq 0} \in \CC^2(\AAA)$ with $\mu \bullet \mu = 0$.
We call $(\AAA, \mu)$ a \emph{$cA_{\infty}$-category}. The \emph{curvature} of $A \in \AAA$ is the element $$c_A = (\mu_0)_A \in \AAA(A,A)^2.$$
An element $f \in \AAA(A, A')$ is called \emph{closed} if $\mu_1(f) = 0$.

Let $\AAA$ be an $R$-quiver. A \emph{free object} over $\AAA$ is a formal expression $M = \oplus_{i \in I} \Sigma^{m_i} A_i$ with $I$ an arbitrary index set, $A_i \in \AAA$ and $m_i \in \Z$. For another free object $N = \oplus_{j \in J} \Sigma^{n_j} B_j$ over $\AAA$, we put
$$\Hom(M, N) = \prod_{i} \oplus_j \Sigma^{n_j - m_i}\AAA(A_i, B_j).$$
An element $f \in \Hom^l(M,N)$ is represented by a column finite 
matrix $f = (f_{ji})$ with $f_{ji} \in \AAA^{l +n_j - m_i}(A_i, B_j)$. We denote $f(i) = \sum_{j \in I} f_{ji} \in \oplus_j \AAA^{l +n_j - m_i}(A_i, B_j).$
Free objects over $\AAA$ constitute an $R$-quiver $\Free(\AAA)$ with $\Free(\AAA)(M,N) = \Hom(M, N)$. 

\subsection{Completions}\label{parcompl}
Let $R$ be a ring with an ideal $T \subseteq R$, and let $R$ be complete for the $T$-adic topology. Sometimes it is natural to work with completions with respect to this topology, or pointwise versions of this topology. We will make a part, and as we are dealing with completion we are actually using underlying uniform structures \cite{bourbaki2}. Let $\AAA$ be a complete $R$-quiver. For $M = \oplus_{i \in I} \Sigma^{m_i} A_i$ and $N = \oplus_{i \in I} \Sigma^{n_i} A_i$, we endow $\Free(\AAA)(M,N)$ with the pointwise $T$-adic uniformity, for which a sequence $(f_n)_n$ with $f_n \in \Free_{}(\AAA)(M,N)$ is Cauchy if and only if for every $i \in I$, for every $\alpha \in \N$, there is an $n_0 \in \N$ such that for all $p,q \geq n_0$ we have $f_p(i) - f_q(i) \in T^{\alpha}[\oplus_j \Sigma^{n_j - m_i}\AAA(A_i, B_j)]$. Let $\widehat{\Hom}(M,N) = \widehat{\Free}(\AAA)(M,N)$ be the corresponding completion. We thus obtain the quiver $\widehat{\Free}(\AAA)$.

\begin{remark}
Note that an even weaker notion of convergence is obtained by allowing convergence within every column $\oplus_j \AAA^{l +n_j - m_i}(A_i, B_j)$, considered as a submodule of $\prod_j \AAA^{l +n_j - m_i}(A_i, B_j)$, to be pointwise as well. Pointwise convergence for direct sum modules naturally leads to completions of ``decaying functions'' as considered in \cite{yekutieli4}. 
\end{remark}

Now consider the natural extension of an element $\phi \in \CC(\AAA)$ to $\CC(\Free_{}(\AAA))$. Consider 
$\phi: \Free_{}(\AAA)^{\otimes k} \lra \Free_{}(\AAA)$.
By $R$-multilinearity, $\phi$ is continuous in each variable, and we obtain a unique extension
$$\hat{\phi}: \widehat{\Free}_{}(\AAA)^{\otimes k} \lra \widehat{\Free}_{}(\AAA).$$

Now consider a sequence $(\phi_n)_n$ with $\phi_n \in \CC^k(\Free_{}(\AAA))$. We endow $\CC^k(\Free_{}(\AAA))$ with the pointwise uniformity of the codomain $\Free_{}(\AAA)$, for which a sequence $(\phi_n)_n$ is Cauchy if and only if $(\phi_n(f_k, \dots, f_1))_n$ is Cauchy in $\Free_{}(\AAA)$ for every $(f_k, \dots, f_1) \in \Free_{}(\AAA)^{\otimes k}$. We endow $\CC(\Free_{}(\AAA))$ with the product uniformity of the $\CC^k(\Free_{}(\AAA))$. We obtain the completions $\widehat{\CC}^k(\Free_{}(\AAA))$ and $\widehat{\CC}(\Free_{}(\AAA))$.
We have
$$\begin{aligned}
\widehat{\CC}^k(\Free_{}(\AAA)) & = \Hom_R(\Free_{}(\AAA)^{\otimes k} , \widehat{\Free}_{}(\AAA)) \\
& =  \Hom_R(\widehat{\Free}_{}(\AAA)^{\otimes k} , \widehat{\Free}_{}(\AAA)) \\
& = \CC^k(\widehat{\Free}_{}(\AAA))
\end{aligned}$$
and consequently 
\begin{equation}\label{compl}
\widehat{\CC}(\Free_{}(\AAA)) = \CC(\widehat{\Free}_{}(\AAA)).
\end{equation}
In particular, $\widehat{\CC}(\Free_{}(\AAA))$ is a brace algebra

On the other hand, we can endow $\Free_{}(\AAA)$ with the $T$-adic topology, and consider the completion ${\Free}^T_{}(\AAA)$, or we can endow $\CC(\Free_{}(\AAA))$ with the $T$-adic topology, and consider the completion $\CC^T(\Free_{}(\AAA))$. As sets, these completions are equal to the ones we have constructed, but our pointwise uniformities allow a weaker convergence.

\begin{lemma}\label{lemcompl}
With notations as above, the canonical continuous maps ${\Free}^T_{}(\AAA) \lra \widehat{\Free}_{}(\AAA)$ and $\CC^T(\Free_{}(\AAA)) \lra \widehat{\CC}(\Free_{}(\AAA))$ are bijective.
\end{lemma}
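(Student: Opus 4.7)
The plan is to reduce both bijectivity statements to the commutation of $T$-adic completion with products of $R$-modules. The crucial input is the identity
\[ T^{\alpha}\, \prod_{i \in I} Y_i \;=\; \prod_{i \in I} T^{\alpha} Y_i, \]
valid whenever the powers $T^{\alpha}$ are finitely generated as $R$-ideals, which in the paper's setting with $T = (t)$ principal is automatic.

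For the first map, I fix $M = \oplus_i \Sigma^{m_i} A_i$ and $N = \oplus_j \Sigma^{n_j} B_j$, and write $\Free(\AAA)(M,N) = \prod_i Y_i$ with $Y_i = \oplus_j \Sigma^{n_j - m_i}\AAA(A_i, B_j)$. Unpacking the pointwise $T$-adic uniformity, convergence there amounts to honest $T$-adic convergence inside each fixed column $Y_i$, with rates allowed to vary across columns, so $\widehat{\Free}(\AAA)(M,N) = \prod_i Y_i^{\wedge T}$ where $(-)^{\wedge T}$ denotes $T$-adic completion. The key identity above gives $\Free(\AAA)(M,N)/T^{\alpha} \Free(\AAA)(M,N) = \prod_i Y_i/T^{\alpha} Y_i$, and passing to $\lim_{\alpha}$ (which commutes with $\prod$) also yields $\prod_i Y_i^{\wedge T}$ for the $T$-adic completion ${\Free}^T(\AAA)(M,N)$. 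The canonical continuous map is this very identification, hence bijective.

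For the Hochschild statement, the argument follows the same pattern. Decomposing $\CC^k(\Free(\AAA))$ as a product over tuples of objects of modules of the form $\Hom^q_R\bigl(\Free(\AAA)^{\otimes k}(-), \prod_i Y_i\bigr) = \prod_i \Hom^q_R(\Free(\AAA)^{\otimes k}(-), Y_i)$, one pulls the completion through the outer product via the same key identity, and then inside $\Hom_R(X,-)$, which commutes with inverse limits. Invoking the first bijectivity to identify completion of the codomain with its pointwise version, both completions agree with $\CC^k(\widehat{\Free}(\AAA))$, which by the identity displayed just before the lemma equals $\widehat{\CC}^k(\Free(\AAA))$.

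The main obstacle is the key identity $T^{\alpha} \prod_i Y_i = \prod_i T^{\alpha} Y_i$ itself: given a family $(y_i)_i$ on the right, writing $y_i = \sum_{k=1}^r t_k h_{i,k}$ using a fixed finite generating set $t_1, \ldots, t_r$ of $T^{\alpha}$ allows the reassembly $(y_i)_i = \sum_k t_k (h_{i,k})_i \in T^{\alpha} \prod_i Y_i$, so the nontrivial inclusion holds as soon as $T^{\alpha}$ is finitely generated. Without such a hypothesis the $T$-adic topology on a product is strictly finer than the pointwise one and the lemma would need amendment; the paper's hypotheses on $R$ make this subtlety invisible.
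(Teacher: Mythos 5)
The paper states Lemma \ref{lemcompl} without any proof, so there is nothing of the authors' to compare your argument against; I can only assess it on its own terms. Your treatment of the first map is correct and is surely the intended argument: the pointwise completion of $\prod_i Y_i$ is $\prod_i Y_i^{\wedge T}$, the $T$-adic completion is $\lim_{\alpha}(\prod_i Y_i)/T^{\alpha}(\prod_i Y_i)$, and these agree because $T^{\alpha}\prod_i Y_i = \prod_i T^{\alpha}Y_i$ whenever $T^{\alpha}$ is finitely generated. Your caveat about non-finitely-generated $T$ is fair: \S\ref{parcompl} imposes no finiteness hypothesis on $T$, but every application in the paper has $T = (t)$ principal, so the hypothesis is harmless there.

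For the second map there is a genuine gap at the step ``one pulls the completion \dots inside $\Hom_R(X,-)$, which commutes with inverse limits.'' Commuting $\Hom_R(X,-)$ with limits gives $\Hom_R(X, Y^{\wedge T}) = \lim_{\alpha}\Hom_R(X, Y/T^{\alpha}Y)$; what you actually need, after pulling the completion through the outer products, is that this coincides with the $T$-adic completion $\lim_{\alpha}\Hom_R(X,Y)/T^{\alpha}\Hom_R(X,Y)$. The levelwise comparison maps $\Hom_R(X,Y)/T^{\alpha}\Hom_R(X,Y) \lra \Hom_R(X, Y/T^{\alpha}Y)$ are in general neither injective (the kernel is $\Hom_R(X,T^{\alpha}Y)/T^{\alpha}\Hom_R(X,Y)$, and $\Hom_R(X,T^{\alpha}Y)$ can strictly contain $T^{\alpha}\Hom_R(X,Y)$) nor surjective (an $R$-linear map into $Y/T^{\alpha}Y$ need not lift to $Y$). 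Injectivity can be rescued in the paper's main setting because $T=(t)$ with $t$ regular on $Y = \oplus_j \AAA_0(A_i,B_j)[[t]]$, so that $t^{\alpha}\colon Y \lra t^{\alpha}Y$ is an isomorphism and $\Hom_R(X,t^{\alpha}Y) = t^{\alpha}\Hom_R(X,Y)$; but this uses more than finite generation of $T^{\alpha}$ and fails for an arbitrary complete $R$-quiver with torsion. Surjectivity is the real issue: it amounts to showing that every $R$-multilinear map into $\widehat{\Free}(\AAA)$ is a $T$-adic limit of multilinear maps into $\Free(\AAA)$, i.e.\ that each reduction modulo $T^{\alpha}$ of such a map lifts from $\widehat{\Free}(\AAA)$ back to $\Free(\AAA)$. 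The obvious candidates (truncating power series) are not $R$-linear, and your argument does not produce $R$-linear ones. You need either an explicit lifting construction exploiting the concrete shape $Y = \oplus_j V_j[[t]]$, $Y^{\wedge T} = (\oplus_j V_j)[[t]]$, or an additional hypothesis; as it stands the second bijection is asserted rather than proved.
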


\begin{example}
Take $0 = T \subseteq R$. A cauchy sequence $(f_n)_n$ in $\Free_{}(\AAA)(M,N)$ is columnwise stationary. Hence, it converges pointwise to the column-finite matrix of stable columns. Thus, $\Free_{}(\AAA)$ is complete and by \eqref{compl}, so is $\CC(\Free_{}(\AAA))$.
\end{example}

\begin{example}
Take $R = k[[t]]$ and $T = (t) \subseteq R$. Let $\AAA_0$ be a $k$-linear quiver and put $\AAA = \AAA_0[[t]]$. We have $\Free_{}(\AAA) = \Free_{}(\AAA_0)[[t]]$ and $\widehat{\CC}(\Free_{}(\AAA)) = \CC(\Free_{}(\AAA_0))[[t]]$.
\end{example}

\begin{definition}\label{defallow}
Let $\AAA$ be a quiver and consider $\phi, \delta, \psi  \in \widehat{\CC}(\Free_{}(\AAA))$. 
\begin{enumerate}
\item We say that $\delta$ is $\phi$-\emph{allowable} if the series
$\sum_{n \in \N} \phi\{ \delta^{\otimes n} \}$ is convergent. In this case we denote the limit by $\embr_{\delta}(\phi)$.
\item We say that $(\delta, \psi)$ is $\phi$-\emph{allowable} if the series
$\sum_{n \in \N} \sum_{p = 0}^n \phi\{ \delta^{\otimes p} \}\{ \psi^{\otimes n - p}\}$ is convergent (after renumbering, i.e without brackets).
\end{enumerate}
\end{definition}

\begin{remark}
As explained in \cite[\S 3.2]{lowen9}, allowability can be equivalently expressed in terms of convergence of ``$\sum_{n \in \N} \delta^{\otimes n}$'' for an induced ``weak'' topology on the bar construction of the Hochschild complex, and the notion is inspired by the treatment in \cite{fukaya}. It also bears some resemblance to the definition of complete bar resolution from \cite{yekutieli}. 
\end{remark}

\begin{lemma}\label{lemplus}
Consider $\phi \in \widehat{\CC}(\Free_{}(\AAA))$ and $\delta, \psi  \in \widehat{\CC}^{0, \ast}(\Free_{}(\AAA))$ and suppose $(\delta, \psi)$ is $\phi$-allowable. 
\begin{enumerate}
\item $(\psi, \delta)$, $\delta$, $\psi$ and $\delta + \psi$ are $\phi$-allowable.
\item $\psi$ is $\embr_{\delta}(\phi)$-allowable and $\embr_{\psi}(\embr_{\delta}(\phi)) = \embr_{\delta + \psi}(\phi)$.
\end{enumerate}
\end{lemma}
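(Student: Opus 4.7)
The plan is to reduce everything to a combinatorial identity about brace operations on elements of Hochschild arity zero. For $\delta, \psi \in \widehat{\CC}^{0,*}(\Free(\AAA))$, such elements have no input slots, so $\delta\{\psi^{\otimes k}\} = 0$ and $\psi\{\delta^{\otimes k}\} = 0$ whenever $k \geq 1$. In the higher brace identity $\phi\{A\}\{B\} = \sum \phi\{B_0, a_1\{B_1\}, B_2, a_2\{B_3\}, \dots, a_m\{B_{2m-1}\}, B_{2m}\}$, specializing to $A = \delta^{\otimes p}$ and $B = \psi^{\otimes q}$ forces every inner block $B_{\mathrm{odd}}$ to be empty, leaving
\[
\phi\{\delta^{\otimes p}\}\{\psi^{\otimes q}\} \;=\; \sum_{w} \phi\{w\},
\]
the sum running over all words $w$ in the alphabet $\{\delta,\psi\}$ of length $p+q$ with exactly $p$ copies of $\delta$ and $q$ copies of $\psi$. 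In particular $\phi\{\delta^{\otimes p}\}\{\psi^{\otimes q}\} = \phi\{\psi^{\otimes q}\}\{\delta^{\otimes p}\}$. I read the clause ``after renumbering, i.e.\ without brackets'' in Definition \ref{defallow} as stating that once each $\phi\{\delta^{\otimes p}\}\{\psi^{\otimes q}\}$ is expanded via this identity, the grand series $\sum_{w} \phi\{w\}$ indexed over all finite words in $\{\delta,\psi\}$ converges unconditionally in the pointwise $T$-adic topology.

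For part (1), once the grand series converges, $(\psi,\delta)$-allowability is immediate by the symmetry above. Allowability of $\delta$ and of $\psi$ corresponds to the subseries of words using only $\delta$, resp.\ only $\psi$; since the pointwise $T$-adic uniformity is defined by nested subgroups, a convergent series has partial sums Cauchy in every $T^\alpha$, so deleting any subfamily of summands preserves the Cauchy property and yields a convergent subseries. Finally, $R$-multilinearity of the brace operations gives
\[
\phi\{(\delta+\psi)^{\otimes n}\} \;=\; \sum_{w \in \{\delta,\psi\}^n} \phi\{w\},
\]
so the partial sums of $\sum_n \phi\{(\delta+\psi)^{\otimes n}\}$ regroup partial sums of the grand series by word length, and thus converge.

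For part (2), continuity of the brace $-\{\psi^{\otimes q}\}$ in its outer argument (immediate from the pointwise nature of the topology) gives
\[
\embr_\delta(\phi)\{\psi^{\otimes q}\} \;=\; \Bigl(\sum_p \phi\{\delta^{\otimes p}\}\Bigr)\{\psi^{\otimes q}\} \;=\; \sum_p \phi\{\delta^{\otimes p}\}\{\psi^{\otimes q}\}.
\]
Summing over $q$ and invoking the grand-series identity shows that $\psi$ is $\embr_\delta(\phi)$-allowable with $\embr_\psi(\embr_\delta(\phi)) = \sum_w \phi\{w\}$. On the other hand, the multilinearity identity from part (1) yields $\embr_{\delta+\psi}(\phi) = \sum_n \phi\{(\delta+\psi)^{\otimes n}\} = \sum_w \phi\{w\}$, so the two sides of the claimed equality coincide.

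The main obstacle is the careful bookkeeping of pointwise $T$-adic convergence under these rearrangements and regroupings. The essential point is that upon evaluation at any tuple of morphisms, the family $(\phi\{w\}(f_1,\dots,f_k))_w$ takes values in the $T$-adically complete module $\AAA(-,-)$, and unconditional convergence of such a family is equivalent to: for every $\alpha$, only finitely many $w$ contribute modulo $T^\alpha$. This condition is manifestly invariant under rearrangement and grouping, which is what legitimises all the identifications above; continuity of each brace in each argument, built into the definition of the pointwise uniformity on $\widehat{\CC}(\Free(\AAA))$ in \S\ref{parcompl}, takes care of passing sums through braces.
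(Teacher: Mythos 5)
Your proof is correct and follows essentially the same route as the paper's: both rest on the observation that, since $\delta$ and $\psi$ have Hochschild arity zero, $\phi\{\delta^{\otimes p}\}\{\psi^{\otimes q}\}$ is the sum over all interleavings, giving $\phi\{(\delta+\psi)^{\otimes n\}}=\sum_{p+q=n}\phi\{\delta^{\otimes p}\}\{\psi^{\otimes q}\}$, after which (1) follows by passing to subseries and finite regroupings and (2) by interchanging the order of summation. The only difference is cosmetic: you expand one level further, down to individual words in $\{\delta,\psi\}$, whereas the paper stays at the level of the doubly indexed terms $\phi\{\delta^{\otimes p}\}\{\psi^{\otimes q}\}$; your regroupings all respect that coarser indexing, so the argument is valid under either reading of ``after renumbering, without brackets.''
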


\begin{proof}
(1)  $\phi$-allowability of $\delta$ and $\psi$ readily follows. Since $\delta, \psi  \in \CC^{0, \ast}(\Free_{}(\AAA))$, we have $\phi\{ (\delta + \psi)^{\otimes n} \} = \sum_{p = 0}^n \phi\{\delta^{\otimes p} \} \{ \psi^{\otimes{n-p}} \}$, whence $\phi$-allowablility of $\delta + \psi$ also follows. 
(2) We have $\sum_{n} \embr_{\delta}(\phi) \{ \psi^{\otimes k} \} = \sum_n (\sum_k \phi \{ \delta^{\otimes k} \}) \{\psi^{\otimes n} \} = \sum_n \sum_k (\phi \{ \delta^{\otimes k} \} \{\psi^{\otimes n} \})$. Convergence of this series (in $n$) easily follows from $\phi$-allowability of $(\delta, \psi)$.
\end{proof}

We are mainly interested in the case where $\phi \in \CC(\AAA)$ and $\delta, \psi  \in \CC^{0, \ast}(\Free_{}(\AAA))$. In this setup, we have the following building blocks of allowable elements.

\begin{definition}\label{defnilp}
Consider $\Phi \subseteq \CC(\AAA)$, $M = \oplus_{i \in I} \Sigma^{n_i}A_i  \in \Free_{}(\AAA)$ and $\delta, \psi \in \Hom(M, M)$. We say that 
\begin{enumerate}
\item[(i)] $\delta$ is \emph{locally $\Phi$-allowable}
\item[(ii)] $(\delta, \psi)$ is \emph{locally $\Phi$-allowable}
\end{enumerate}
if for every $\alpha \in \N$ and $i \in I$, there is an $m_0 \in \N$ such that for every $\phi \in \Phi$, for every $j \in I$, for every collection of objects $B_0, \dots, B_n, C_0, \dots, C_l \in \AAA$ with $B_n = A_i$ and $C_0 = A_j$, for every $(b_n, \dots, b_1) \in \AAA(B_{n-1}, B_n) \otimes \dots \otimes \AAA(B_0, B_1)$ and $(c_l, \dots, c_1) \in \AAA(C_{l-1}, C_l) \otimes \dots \otimes \AAA(C_0, C_1)$, we have
\begin{enumerate}
\item[(i)] for every $m \geq m_0$, the expression
\begin{equation}\label{eqnilp}
\sum_{k_{m-1}, \dots, k_1}\phi(c_l \otimes \dots \otimes c_{1} \otimes \delta_{jk_{m-1}} \otimes \dots \delta_{k_{d+1} k_d} \dots \otimes \delta_{k_1 i} \otimes b_{n} \otimes \dots \otimes b_1)
\end{equation}
is in $T^{\alpha}\AAA(B_0, C_l)$.
\item[(ii)] for every $p, q$ with $p + q = m \geq m_0$, the expression
\begin{equation}\label{eqnilp2}
\sum_{k_{m-1}, \dots, k_1}\phi(c_l \otimes \dots \otimes c_{1} \otimes \delta^{s_m}_{jk_{m-1}} \otimes \dots \delta^{s_{d+1}}_{k_{d+1} k_d} \dots \otimes \delta^{s_1}_{k_1 i} \otimes b_{n} \otimes \dots \otimes b_1)
\end{equation}
is in $T^{\alpha}\AAA(B_0, C_l)$, where the sum is taken over all expressions with $p$ of the $\delta^{s_i}$ equal to $\delta$ and $q$ of the $\delta^{s_i}$ equal to $\psi$.
\end{enumerate}
For $\Phi = \{ \phi \}$, we simply say locally $\phi$-nilpotent instead of locally $\Phi$-nilpotent.
\end{definition}

\begin{lemma}\label{lemnilp}
If $(\delta, \phi)$ is locally $\Phi$-nilpotent, then $\delta$, $\psi$ and $\delta + \psi$ are locally $\Phi$-nilpotent.
\end{lemma}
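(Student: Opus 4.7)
The lemma is a bookkeeping consequence of Definition \ref{defnilp}. For each $\alpha \in \N$ and $i \in I$, I would take the same threshold $m_0$ which witnesses condition (ii) for the pair $(\delta, \psi)$, and verify that the same $m_0$ witnesses condition (i) for each of $\delta$, $\psi$, and $\delta + \psi$.

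For $\delta$ individually, the choice $(p,q) = (m, 0)$ in (ii) leaves a single ``arrangement'' in which every $\delta^{s_i}$ equals $\delta$; this reproduces precisely the sum appearing in (i) for $\delta$, which therefore lies in $T^{\alpha}\AAA(B_0, C_l)$. Symmetrically, $(p,q) = (0, m)$ yields condition (i) for $\psi$. For $\delta + \psi$, the plan is to expand each factor $(\delta + \psi)_{k_{d+1} k_d} = \delta_{k_{d+1} k_d} + \psi_{k_{d+1} k_d}$ in the $m$-fold chain inside $\phi$ and use $R$-multilinearity of $\phi$ in each of its slots. Since $\delta$ and $\psi$ are column finite, the sum over $(k_{m-1}, \dots, k_1)$ is finite for each fixed $i \in I$, so the expression appearing in (i) for $\delta + \psi$ decomposes as a finite sum of $2^m$ terms indexed by subsets $S \subseteq \{1, \dots, m\}$ recording the positions at which $\delta$ rather than $\psi$ was chosen. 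Regrouping these terms according to $p := |S|$ and $q := m - p$ identifies the total with the sum over all pairs $p + q = m$ of the expressions appearing in (ii) for $(\delta, \psi)$. Each such summand lies in $T^{\alpha}\AAA(B_0, C_l)$ by hypothesis, hence so does their sum.

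No serious obstacle is expected: the argument is purely a rearrangement of finite sums matched against the hypothesis. The only point deserving attention is the observation that condition (ii) with one of $p, q$ equal to zero degenerates to condition (i), and that the $2^m$-fold multilinear expansion of $(\delta + \psi)^{\otimes m}$ inside $\phi$ packages correctly into the $p + q = m$ groups recorded in (ii). The proof here is the ``combinatorial shadow'' of the reasoning given for Lemma \ref{lemplus}(1) above, but phrased on the level of the finite individual expressions (\ref{eqnilp})--(\ref{eqnilp2}) rather than on the level of convergent series of braces.
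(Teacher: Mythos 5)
Your proposal is correct. The paper in fact states Lemma \ref{lemnilp} without any proof, treating it as an immediate consequence of Definition \ref{defnilp}, and your argument is precisely the intended routine verification: the cases $(p,q)=(m,0)$ and $(0,m)$ of condition (ii) degenerate to condition (i) for $\delta$ and $\psi$ respectively, and the multilinear expansion of the $(\delta+\psi)$-chains, regrouped by the number of $\delta$-entries, writes the expression \eqref{eqnilp} for $\delta+\psi$ as the sum over $p+q=m$ of the expressions \eqref{eqnilp2}, each of which lies in $T^{\alpha}\AAA(B_0,C_l)$ by hypothesis. This is exactly the finite-sum shadow of the brace identity used in the proof of Lemma \ref{lemplus}(1), as you observe (note only that the hypothesis should read ``$(\delta,\psi)$'' rather than ``$(\delta,\phi)$'', a typo in the statement).
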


\begin{lemma}
If $\delta$ is locally $\Phi$-allowable, and $\psi \in T^{1}\Hom(M,M)$, then $(\delta, \psi)$ is locally $\Phi$-allowable.
\end{lemma}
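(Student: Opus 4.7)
The plan is a case analysis on the number $q$ of $\psi$-factors. Fix $\alpha \in \N$ and $i \in I$; the bound $m_0$ will be produced at the end. By $R$-multilinearity of each $\phi \in \Phi$ in its slots, together with the assumption $\psi \in T^{1}\Hom(M,M)$ (so every entry $\psi_{kl}$ lies in $T$), any single summand of \eqref{eqnilp2} for a given ordering containing $q$ occurrences of $\psi$ lies in $T^{q}\AAA(B_0,C_l)$. Summing over the $\binom{m}{q}$ orderings preserves this bound, so as soon as $q \geq \alpha$ the expression \eqref{eqnilp2} lies in $T^{\alpha}\AAA$; this settles the many-$\psi$ regime without any use of the allowability of $\delta$.

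The substantive case is $q < \alpha$. For each ordering, the $q$ positions of $\psi$ carve the $p = m-q$ copies of $\delta$ into $q+1 \leq \alpha$ consecutive chunks whose sizes sum to $p$. If $m_0$ is chosen so that $p > \alpha N$ for a constant $N$ to be described, the pigeonhole principle guarantees that some chunk has length $\geq N$. I then fix the values of all summation indices $k_{\ast}$ that lie outside this chunk; the inner sum over the chunk's intermediate indices is literally of the form \eqref{eqnilp} for $\delta$, with the functional $\phi$, the side data $b,c$, and the endpoint index $j$ repackaged to absorb the fixed outer data, and with some initial index $i'$. Local $\Phi$-allowability of $\delta$ at $i'$ then places the inner sum in $T^{\alpha}\AAA$, and resuming the finite outer summation (by column-finiteness of $\delta$ and $\psi$ in $\Hom(M,M)$) and summing over the finitely many orderings preserves the bound.

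The main obstacle is that $i'$ is a summation variable unless the chosen chunk is the rightmost one, whereas the constant $m_{0}^{\delta}(\alpha, i')$ supplied by local $\Phi$-allowability of $\delta$ is allowed to depend on $i'$. To produce a uniform threshold $N$, I would iteratively construct a finite set $\tilde I \subseteq I$ of possible initial indices: starting from $\{i\}$, at each of at most $\alpha$ stages I enlarge by the indices reachable via finitely many non-zero entries of $\delta$ or $\psi$ within the current bound, and update the bound to the maximum of $m_0^{\delta}(\alpha,\cdot)$ over the enlarged set. Column-finiteness of $\delta, \psi \in \Hom(M,M)$ ensures every enlargement is finite, and because any large chunk in the $q < \alpha$ regime is preceded by strictly fewer than $\alpha$ small chunks, the iteration terminates after $\alpha$ stages with a finite $\tilde I$ and a finite $N := \max_{k \in \tilde I} m_0^{\delta}(\alpha, k)$. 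Setting $m_0 := \alpha N + \alpha$ then reconciles the two regimes and yields local $\Phi$-allowability of $(\delta, \psi)$.
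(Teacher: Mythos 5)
Your proof is correct and follows the same strategy as the paper's: split on whether the number $q$ of $\psi$-factors is at least $\alpha$ (in which case $\psi \in T^{1}\Hom(M,M)$ forces each term of \eqref{eqnilp2} into $T^{\alpha}\AAA$) or less than $\alpha$ (in which case pigeonhole produces a run of at least $m_0$ consecutive $\delta$'s to which local $\Phi$-allowability applies). The only difference is that the paper's proof simply sets $m_1 = \alpha m_0$ for the single threshold $m_0 = m_0(\alpha,i)$ and does not comment on the fact that the long $\delta$-run may begin at an index other than $i$, where the threshold could a priori be larger; your iterative construction of the finite set $\tilde{I}$ of reachable indices via column-finiteness supplies exactly the uniformization needed to close that step, in the same spirit as the finite sets $J_i$ appearing in the paper's proof of Proposition \ref{propnilpallow}.
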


\begin{proof}
Take $\alpha \in \N$ and $i \in I$. Since $\delta$ is locally $\Phi$-allowable, we obtain $m_0 \in \N$ as in Definition \ref{defnilp}. Put $m_1 = \alpha m_0$ and consider $p + q \geq m_1$. If $q \geq \alpha$, then expression \eqref{eqnilp2} is in $T^{\alpha} \AAA$ and we are done. If $q < \alpha$, then expression \eqref{eqnilp2} contains at least $m_0$ consecutive entries $\delta^{s_i}$. Hence, by $\Phi$-allowability of $\delta$, expression \eqref{eqnilp2} is in $T^{\alpha} \AAA$ as well.
\end{proof}

\begin{example}\label{exiln}
Put $0 = T \subseteq R$. Consider $M = \oplus_{i \in I} \Sigma^{n_i}A_i  \in \Free(\AAA)$ and $\delta \in \Hom(M, M)$. We say that $\delta$ is \emph{upper triangular} (called ``intrinsically locally nilpotent'' in \cite{dedekenlowen2}) if there is an $m_0 \in \N$ such that every sequence $(\delta_{jk_{m-1}}, \dots, \delta_{k_{d+1} k_d}, \dots, \delta_{k_1 i})$ of length $m \geq m_0$ contains an zero element. Obviously, if $\delta$ is upper triangular, it is locally $\CC(\AAA)$-allowable. If $\delta$ corresponds to an upper triangular $n \times n$ matrix, then $\delta$ is upper triangular by taking $m_0 = n$ for every $i \in I$. 
\end{example}

\begin{example}\label{exmun}
Consider $M = \oplus_{i \in I} \Sigma^{n_i}A_i  \in \Free(\AAA)$ and $\delta, \psi \in \Hom(M, M)$. If for $\Phi \subseteq \CC(\AAA)$, for every $\alpha \in \N$, there exists $n_0 \in \N$ such that for all $\phi \in \Phi$, we have $\phi_n \in T^{\alpha}\CC(\AAA)$ for $n \geq n_0$, then $(\delta, \psi)$ is locally $\Phi$-allowable, by choosing $m_0 = n_0$ for every $i \in I$.
\end{example}

\begin{proposition}\label{propnilpallow}
Consider $\phi \in \CC(\AAA)$, $\delta, \psi \in \CC^{0, \ast}(\Free_{}(\AAA))$. Suppose for every $M \in \Free_{}(\AAA)$, $\delta_M \in \Hom(M, M)$ is locally $\phi$-allowable (resp. $(\delta_M, \psi_M)$ is locally $\phi$-allowable). Then $\delta$ is $\phi$-allowable (resp. $(\delta, \psi)$ is $\phi$-allowable).
\end{proposition}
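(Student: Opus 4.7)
The plan is to unpack convergence in $\widehat{\CC}(\Free(\AAA))$ into a pointwise $T$-adic condition and then use local allowability cluster-by-cluster. Recall that the uniformity on $\widehat{\CC}(\Free(\AAA))$ is the product of pointwise uniformities on $\CC^k(\Free(\AAA))$ and that the uniformity on $\widehat{\Hom}(M,N)$ is the column-wise $T$-adic one. So it suffices, for each $k$, each composable tuple $(f_k, \dots, f_1)$ with $f_j \in \Hom(M_{j-1}, M_j)$, each column index $i \in I_0$ of $M_0 = \oplus_{i \in I_0} \Sigma^{n_i^{(0)}} A_i^{(0)}$, and each $\alpha \in \N$, to exhibit some $n_0$ such that for all $n \geq n_0$ the element $(\phi\{\delta^{\otimes n}\}(f_k, \dots, f_1))(i)$ lies in $T^{\alpha}\bigl(\oplus_{i_k \in I_k} \Sigma^{\ast} \AAA(A_i^{(0)}, A_{i_k}^{(k)})\bigr)$. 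Once this is achieved, the partial sums of $\sum_n \phi\{\delta^{\otimes n}\}(f_k, \dots, f_1)$ are automatically Cauchy column-wise, and we are done.

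Since each $\delta_{M_j}$ has external arity zero, the brace expansion reads
\[
\phi\{\delta^{\otimes n}\}(f_k, \dots, f_1) \;=\; \sum_{\substack{(m_0, \dots, m_k) \in \N^{k+1} \\ m_0 + \dots + m_k = n}} \phi\bigl(\delta_{M_k}^{\otimes m_k},\, f_k,\, \delta_{M_{k-1}}^{\otimes m_{k-1}},\, \dots,\, f_1,\, \delta_{M_0}^{\otimes m_0}\bigr),
\]
so the $n$ inserted $\delta$'s partition into $k+1$ clusters, the $j$-th cluster living on $M_j$. Column-finiteness of all $f_j$ and $\delta_{M_j}$ ensures that the $i$-column of each summand, when written out as a matrix product, is a finite sum over intermediate column indices. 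The goal is to show that for $n$ large every single such summand already lies in $T^\alpha$.

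The key step is an iterative construction of finite sets of ``accessible'' column indices with associated thresholds. Put $S_0 = \{i\}$. Inductively, given a finite $S_j \subseteq I_j$, set
\[
L_j \;=\; \max_{i' \in S_j} m_0(\delta_{M_j}, i', \alpha),
\]
where $m_0(\delta_{M_j}, i', \alpha)$ is the threshold produced by Definition \ref{defnilp} for the connection $\delta_{M_j}$, starting column $i'$, and exponent $\alpha$. Let $S_j' \subseteq I_j$ be the (finite) union over $i' \in S_j$ and $0 \leq s < L_j$ of the supports of $\delta_{M_j}^s(i')$, and let $S_{j+1} \subseteq I_{j+1}$ be the (finite) union over $i'' \in S_j'$ of the supports of $f_{j+1}(i'')$; all these sets are finite by induction. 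Put $n_0 = L_0 + \dots + L_k + 1$. For every decomposition $(m_0, \dots, m_k)$ with $\sum m_j = n \geq n_0$, pigeonhole forces some $m_{j} \geq L_{j}$; take $j^{\ast}$ minimal with this property. For $j < j^{\ast}$ the cluster is shorter than its threshold, so the intermediate column index in $M_{j^{\ast}}$ at which the $j^{\ast}$-th cluster begins necessarily lies in $S_{j^{\ast}}$. Local $\phi$-allowability of $\delta_{M_{j^{\ast}}}$ at this starting column, with the surrounding $\delta$'s and $f$'s playing the role of the bordering $\AAA$-morphisms $(b_n, \dots, b_1)$ and $(c_l, \dots, c_1)$ of Definition \ref{defnilp}, places the whole summand in $T^{\alpha}\AAA$. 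Since for each fixed $n$ there are only finitely many decompositions, and for each decomposition only finitely many intermediate index choices, the entire $n$-th term of the series lies in $T^{\alpha}$.

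For part (2) exactly the same argument applies, with ``cluster'' now meaning a string in which some positions carry $\delta$ and the rest carry $\psi$; Definition \ref{defnilp}(ii) is tailored to kill any such mixed chain once its total length exceeds its $m_0$-threshold, uniformly in the $\delta/\psi$-pattern, which is what one needs to run the reachable-set construction verbatim and conclude that $\sum_n \sum_{p+q=n} \phi\{\delta^{\otimes p}\}\{\psi^{\otimes q}\}$ converges after renumbering. The main subtlety in both parts is that the local allowability threshold depends on the starting column index in $M_j$; the trick to make this go through is the observation that confining earlier clusters below their thresholds, combined with column-finiteness of all matrices, keeps the set of accessible starting columns in $M_{j^{\ast}}$ finite.
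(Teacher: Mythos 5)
Your proof is correct and follows essentially the same route as the paper's: reduce to a single composable tuple, a single starting column and a fixed $\alpha$, then inductively build finite sets of reachable column indices together with thresholds (your $S_j, L_j$ are the paper's $J_i, m_i$), and conclude by pigeonhole that some cluster exceeds its threshold, with minimality of that cluster guaranteeing its starting column lies in the prepared finite set. If anything, your explicit choice of the minimal index $j^{\ast}$ makes a step precise that the paper leaves implicit in the phrase ``by construction''.
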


\begin{proof}
We prove (1). We check convergence of $\sum_{n \in \N} \phi\{ \delta^{\otimes n} \}$. It suffices to look at a collection of objects $M_0, \dots, M_n \in \Free_{}(\AAA)$ with $M_i = \oplus_{k \in K_i} \Sigma^{p_k}B_k$,  $\underline{f} = f_n \otimes \dots \otimes f_1\in \Hom(M_{n-1}, M_n) \otimes \dots \otimes \Hom(M_0, M_1)$ and $k \in K_0$ and $\alpha \in \N$.   We have to show that for some $r_0$, for $r \geq r_0$, $(\phi\{ \delta^{\otimes r} \}(\underline{f}))_{lk} \in T^{\alpha}\AAA$ for all $l \in K_m$. For $\delta_0 = \delta_{M_0} \in \Hom(M_0, M_0)$ and the given $k \in K_0$, let $m_0 = m^{k, \alpha}_0 \in \N$ be obtained from Definition \ref{defnilp}. Let $J_1 \subseteq K_1$ consist of all $l \in K_1$ for which there exists a sequence $(k_m = l, k_{m-1}, \dots, k_0 = k)$ with $m \leq m_0$ such that $((f_1)_{l k_{m-1}}, (\delta_0)_{k_{m-1} k_{m-2}}, \dots, (\delta_0)_{k_1 k})$ contains only non-zero elements. The set $J_1$ is finite. For $\delta_1 = \delta_{M_1} \in \Hom(M_1, M_1)$ and $k \in J_1$, let $m_1^k$ be obtained from Definition \ref{defnilp}. Put $m_1 = \max_{k \in J_1} m_1^k$. Proceeding in the same way, we inductively obtain finite sets $J_i \subseteq K_i$ and $m_i \in \N$ for all $0 \leq i \leq n$. Put $r_0 = \sum_{i = 0}^n m_i$. Now for $r \geq r_0$, $\phi\{ \delta^{\otimes r} \}(\underline{f})$ is a sum of expressions
\begin{equation}\label{eqdelt}
\phi( (\delta_n)^{\otimes t_n}, f_n, \dots, f_{i+1}, (\delta_i)^{\otimes t_i}, f_i, \dots, f_1, (\delta_0)^{\otimes t_0})
\end{equation}
with $\sum_{i = 0}^n t_i = r \geq r_0$ and hence $t_i \geq m_i$ for some $i$. By construction, the $lk$-entry of \eqref{eqdelt} is obtained as a finite sum of expressions of the form \eqref{eqnilp}, taking $\delta = \delta_i$. Hence, this entry is in $T^{\alpha}\AAA$ as desired.
\end{proof}

Let $(\AAA, \mu)$ be a $cA_{\infty}$-category. For an object $M \in \Free(\AAA)$, we consider the set $\Delta_M \subseteq \Hom^1(M,M)$ of locally $\mu$-allowable elements $\delta_M \in \Hom^1(M,M)$, and we consider the completion $\hat{\Delta}_M \subseteq \widehat{\Hom}^1(M,M)$. Elements of $\widehat{\Hom}^1(M,M)$ are called \emph{connections} on $M$ and elements of $\hat{\Delta}_M$ are called \emph{locally $\mu$-allowable connections}.

We now define the quiver ${\Free}_{\hat{\Delta}}(\AAA)$ with 
$$\Ob({\Free}_{\hat{\Delta}}(\AAA)) = \{ (M, \delta_M) \,\, |\,\, M \in \Free(\AAA), \delta_M \in \hat{\Delta}_M \}$$
and ${\Free}_{\hat{\Delta}}(\AAA)((M, \delta_M), (N, \delta_N)) = {\Free}(\AAA)(M,N)$. 
For convenience, we continue to denote objects in ${\Free}_{\hat{\Delta}}(\AAA)$ by $M, N, \dots$. An element $\phi \in \CC(\AAA)$ is trivially extended to an element $\phi \in \CC(\Free_{\hat{\Delta}}(\AAA))$ by mimicking matrix multiplication.
Clearly the results we obtained so far in this section do not depend on the duplication of objects in going from $\Free(\AAA)$ to ${\Free}_{\hat{\Delta}}(\AAA)$, and can readily be stated for ${\Free}_{\hat{\Delta}}(\AAA)$ instead.
Finally, we define the quiver of \emph{twisted objects} over $\AAA$ as
$$\Tw(\AAA) = \widehat{\Free}_{\hat{\Delta}}(\AAA)$$
with $\Ob(\Tw(\AAA)) = \Ob({\Free}_{\hat{\Delta}}(\AAA)$ and $\Tw(\AAA)(M,N) =\widehat{\Free}(\AAA)(M,N)$.
This quiver is endowed with a canonical element $\delta \in \CC^{0,1}(\Tw(\AAA))$ with $\delta_{(M, \delta_M)} = \delta_M$.

In the sequel, we often pretend that connections on $M$ are in $\Hom(M,M)$ rather than in $\widehat{\Hom}(M,M)$. It is easily seen that all the claims we make can actually be extended to the more general connections in $\widehat{\Hom}(M,M)$ by continuity. Further, the connections which will be most important to uss later on, are connections in $\Hom(M,M)$.

\begin{proposition}
There is a $cA_{\infty}$-structure on $\Tw(\AAA)$ given by
\begin{equation}\label{eqembr}
\mathrm{embr}_{\delta}(\mu) = \mu + \mu\{\delta\} + \mu\{ \delta, \delta\} + \dots + \mu\{\delta^{\otimes n} \} + \dots
\end{equation}
For $(M, \delta_M) \in \Tw(\AAA)$, the curvature of $M$ is the element
\begin{equation}
c_M = (\mathrm{embr}_{\delta}(\mu)_0)_M  =  (\mu_0)_M + \mu_1(\delta_M) + \mu_2(\delta_M, \delta_M) + \dots + \mu_n(\delta_M^{\otimes n}) + \dots
\end{equation}
\end{proposition}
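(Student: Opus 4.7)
The proof splits into three tasks: convergence of the defining series, verification of the Maurer--Cartan equation $\embr_{\delta}(\mu) \bullet \embr_{\delta}(\mu) = 0$, and reading off the curvature.

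For convergence, by construction of $\Tw(\AAA)$ each connection $\delta_M$ is a $T$-adic limit of locally $\mu$-allowable connections in $\Delta_M$, and by the continuity argument in the paragraph preceding the statement it suffices to treat $\delta_M \in \Delta_M$. Proposition \ref{propnilpallow} applied with $\phi = \mu$ then ensures that $\delta \in \CC^{0,1}(\Free_{\hat{\Delta}}(\AAA))$ is $\mu$-allowable, so the series $\embr_{\delta}(\mu) = \sum_{n \geq 0} \mu\{\delta^{\otimes n}\}$ converges in $\widehat{\CC}(\Free_{\hat{\Delta}}(\AAA)) = \CC(\Tw(\AAA))$ via the identification \eqref{compl}. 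The curvature formula then drops out immediately: since $\delta \in \CC^{0, \ast}$, each brace $\mu_n\{\delta^{\otimes n}\} = \mu_n(\delta^{\otimes n})$ is itself of arity zero, so extracting the arity-zero component of $\embr_{\delta}(\mu)$ at $M$ gives precisely $(\mu_0)_M + \mu_1(\delta_M) + \mu_2(\delta_M,\delta_M) + \cdots$.

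The substantive step is the Maurer--Cartan equation. My strategy is to establish the identity
$$\embr_{\delta}(\mu) \bullet \embr_{\delta}(\mu) = \embr_{\delta}(\mu \bullet \mu),$$
so that $\mu \bullet \mu = 0$ yields the result. This rests on the higher pre-Jacobi identity of the brace algebra $\CC(\AAA)$, combined with the arity-zero nature of $\delta$: since nothing can be inserted into $\delta$, the only pre-Jacobi terms that survive in $\mu\{\delta^{\otimes n}\}\{\mu\{\delta^{\otimes m}\}\}$ are those that insert $\mu\{\delta^{\otimes m}\}$ directly between the $\delta$'s, giving $\sum_{i=0}^{n} \mu\{\delta^{\otimes i}, \mu\{\delta^{\otimes m}\}, \delta^{\otimes n-i}\}$. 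Summing over $n, m \geq 0$ and reindexing by the total number $k = n+m$ of $\delta$'s and the position of the inner $\mu$ reassembles the result as $\sum_{k \geq 0} (\mu \bullet \mu)\{\delta^{\otimes k}\} = \embr_{\delta}(\mu \bullet \mu)$.

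The main obstacle lies precisely in this rearrangement: at finite truncation the identity reduces to a formal combinatorial manipulation, but to interchange the double sum in the completed setting one must bound the $T$-adic size of all rearranged partial sums uniformly. This is a double-series variant of the reasoning in Lemma \ref{lemplus} and Proposition \ref{propnilpallow}, applied to sequences of $\delta$'s interrupted by a single $\mu$-insertion. Local $\mu$-allowability of $\delta_M$ furnishes, for each target precision $T^{\alpha}$, a uniform bound on the number of $\delta$'s that can appear consecutively before the contribution becomes negligible, and this bound controls both sums simultaneously, legitimising the rearrangement.
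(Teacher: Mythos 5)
Your proposal is correct and follows essentially the same route as the paper: convergence via Proposition \ref{propnilpallow} and the key identity $\mathrm{embr}_{\delta}(\mu) \bullet \mathrm{embr}_{\delta}(\mu) = \mathrm{embr}_{\delta}(\mu \bullet \mu)$, which the paper dispatches with ``a computation reveals.'' Your elaboration of that computation---the pre-Jacobi identity combined with the arity-zero nature of $\delta$, and the allowability-based control of the double-sum rearrangement---is a sound filling-in of exactly the step the paper leaves implicit.
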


\begin{proof}
Convergence of $\embr_{\delta}(\mu)$ follows from Proposition \ref{propnilpallow}. A computation reveals that $\mathrm{embr}_{\delta}(\mu) \bullet \mathrm{embr}_{\delta}(\mu) = \mathrm{embr}_{\delta}(\mu \bullet \mu)$ whence $\embr_{\delta}(\mu)$ defines a $cA_{\infty}$-structure on $\Tw(\AAA)$.
\end{proof}

Consider a small full subcategory $\BBB \subseteq \Tw(\AAA)$ (endowed with the induced $cA_{\infty}$-structure). There is a natural fully faithful morphism of quivers
$$\varphi: \Free(\BBB) \lra \Free(\AAA): \oplus_{i \in I} \Sigma^{n_i} (\oplus_{j \in J_i} \Sigma^{m_j} A_{ij}) \longmapsto \oplus_{i \in I, j \in J_i} \Sigma^{n_i + m_j} A_{ij}.$$ 
Consider a collection of objects $(B_i = \oplus_{j \in J_i} \Sigma^{m_j} A_{ij}, \delta_{B_i}) \in \Tw(\AAA)$. For $N = \oplus_{i \in I} \Sigma^{n_i} B_i \in \Free(\BBB)$, we can
endow the image $\varphi(N)$ with the block diagonal connection
$$\delta_{\oplus} = \begin{pmatrix} {\dots} & 0 & 0 \\ 0 & (-1)^{n_i} \delta_{B_i} & 0\\ 0 & 0 & {\dots} \end{pmatrix}$$
for which we have $(\varphi(N), \delta_{\oplus}) \in \Tw(\AAA)$. 

\begin{example}\label{exshiftsum}
Every object $M = \oplus_{j \in J} \Sigma^{m_j} A_j$ in $\Tw(\AAA)$ has an $n$-shift $\Sigma^n M = \oplus_{j \in J} \Sigma^{m_j + n} A_j$ in $\Tw(\AAA)$ and every collection of objects $M_i = \oplus_{j \in J_i} \Sigma^{m_j} A_{ij} \in \Tw(\AAA)$ has a \emph{direct sum} $\oplus_{i \in I} \oplus_{j \in J_i} \Sigma^{m_j} A_{ij} \in \Tw(\AAA)$.
\end{example}

Now suppose we have $(N = \oplus_{i \in I} \Sigma^{n_i} B_i, \psi_N) \in \Tw(\BBB)$. Then we obtain another connection 
$$\varphi(\psi_N) \in \Hom^1(\varphi(N), \varphi(N)).$$ 

Let $\Tw'(\BBB) \subseteq \Tw(\BBB)$ be the full $cA_{\infty}$-subcategory of twisted objects $(N, \psi_N)$ for which $(\delta_{\oplus}, \varphi(\psi_N))$ is locally $\mu$-allowable.

\begin{proposition}\label{propcomp}
The natural morphism of quivers
$$\varphi: \Tw'(\BBB) \lra \Tw(\AAA): (N = \oplus_{i \in I} \Sigma^{n_i} B_i, \psi_N) \longmapsto (\varphi(N), \delta_{\oplus} + \varphi(\psi_N))$$
is an embedding of $\Tw'(\BBB)$ as a fully faithful $cA_{\infty}$-subcategory of $\Tw(\AAA)$ (with induced $cA_{\infty}$-structure).
\end{proposition}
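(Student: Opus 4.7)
The plan is to verify three things in order: (a) the target of $\varphi$ actually lies in $\Tw(\AAA)$, (b) $\varphi$ is fully faithful at the level of hom-modules, and (c) $\varphi$ intertwines the two $cA_{\infty}$-structures, where $\BBB \subseteq \Tw(\AAA)$ carries its induced structure.

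For (a), the defining condition of $\Tw'(\BBB)$ is precisely that $(\delta_{\oplus}, \varphi(\psi_N))$ is locally $\mu$-allowable. Applying Lemma \ref{lemnilp} to this pair yields that $\delta_{\oplus} + \varphi(\psi_N)$ is locally $\mu$-allowable, so $(\varphi(N), \delta_{\oplus} + \varphi(\psi_N))$ does belong to $\Tw(\AAA)$. For (b), a column-finite matrix representing an element of $\Free(\BBB)(N,N')$, whose entries are themselves column-finite matrices with entries in $\AAA$, reassembles canonically into a single column-finite $\AAA$-matrix representing an element of $\Free(\AAA)(\varphi(N), \varphi(N'))$. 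This bijection is compatible with the pointwise $T$-adic uniformities on both sides (the uniformity is detected entry-by-entry over the same underlying indexing set $\{A_{ij}\}$), so it passes to the completions and gives the required identification of hom-modules.

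The substance lies in (c). Write $\mu_{\BBB}$ for the induced $cA_{\infty}$-structure $\embr_{\delta}(\mu)|_{\BBB}$ on $\BBB$. The structure on $\Tw(\BBB)$ at $(N, \psi_N)$ is then $\embr_{\psi_N}(\mu_{\BBB})$, while the structure on $\Tw(\AAA)$ at the image object is $\embr_{\delta_{\oplus} + \varphi(\psi_N)}(\mu)$. By Lemma \ref{lemplus}(2) applied to the pair $(\delta_{\oplus}, \varphi(\psi_N))$, the latter equals $\embr_{\varphi(\psi_N)}(\embr_{\delta_{\oplus}}(\mu))$. It therefore suffices to identify $\embr_{\delta_{\oplus}}(\mu)$ restricted to $\varphi(\BBB)$ with $\varphi_{\ast}(\mu_{\BBB})$. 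This is where the block-diagonal nature of $\delta_{\oplus}$ becomes decisive: evaluated on a tensor of morphisms between the $\varphi(B_i)$'s, the brace sum $\sum_n \mu\{\delta_{\oplus}^{\otimes n}\}$ decomposes according to which block each $\delta_{\oplus}$-insertion hits, and since only the diagonal entries survive, the sum factors, within each block $B_i$, into $\sum_n \mu\{\delta_{B_i}^{\otimes n}\}$; these are by definition the components of $\mu_{\BBB}$ at the $B_i$'s. The signs match thanks to the $(-1)^{n_i}$ appearing in the definition of $\delta_{\oplus}$.

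The main obstacle is step (c): keeping track of which tensor factors land in which block and verifying the combinatorial matching of brace sums with their signs. Once this block decomposition is in place, applying $\embr_{\varphi(\psi_N)}$ to both sides yields $\varphi_{\ast}(\embr_{\psi_N}(\mu_{\BBB})) = \embr_{\delta_{\oplus} + \varphi(\psi_N)}(\mu)$, so that $\varphi$ is a morphism of $cA_{\infty}$-categories; combined with (b), this shows $\varphi$ realises $\Tw'(\BBB)$ as a fully faithful $cA_{\infty}$-subcategory of $\Tw(\AAA)$.
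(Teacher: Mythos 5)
Your proposal is correct and follows essentially the same route as the paper: membership in $\Tw(\AAA)$ via allowability of the pair $(\delta_{\oplus},\varphi(\psi_N))$, identification of the induced structure on the image of $\varphi$ with $\embr_{\delta_{\oplus}}(\mu)$ by the block-diagonal decomposition, and the key identity $\embr_{\varphi(\psi_N)}(\embr_{\delta_{\oplus}}(\mu)) = \embr_{\delta_{\oplus}+\varphi(\psi_N)}(\mu)$ from Lemma \ref{lemplus}(2). The extra detail you supply on reassembling column-finite matrices and on the blockwise factorization of the brace sums is exactly what the paper leaves implicit.
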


\begin{proof}
If $(\delta_{\oplus}, \varphi(\psi_N))$ is locally $\mu$-allowable, we have $(\varphi(N), \delta_{\oplus} + \varphi(\psi_N)) \in \Tw(\AAA)$. 
Denoting connections in $\Tw(\AAA)$ by $\delta$, and connections in $\Tw(\BBB)$ by $\psi$, the $cA_{\infty}$-structure on (subcategories of) $\Tw(\AAA)$ is given by $\embr_{\delta}(\mu)$ and the $cA_{\infty}$-structure on (subcategories of) $\Tw(\BBB)$ is given by $\embr_{\psi}(\embr_{\delta}(\mu))$. Here $\embr_{\delta}(\mu)$ is to be interpreted as the natural extension of $\embr_{\delta}(\mu)$ on $\BBB$ to $\Tw(\BBB)$. Using $\varphi$, this structure translates into $\embr_{\delta_{\oplus}}(\mu)$, and $\embr_{\psi}(\embr_{\delta}(\mu))$ translates into $$\embr_{\varphi(\psi)}(\embr_{\delta_{\oplus}}(\mu)).$$ By Lemma \ref{lemplus}, the latter structure equals $\embr_{\delta_{\oplus} + \varphi(\psi)}(\mu)$, which is the restriction of $\embr_{\delta}(\mu)$ to the image of $\varphi$.
\end{proof}

\begin{example}\label{excone}
Consider $(M, \delta_M), (N, \delta_N) \in \Tw(\AAA)$ and $f \in \Hom^1(M,N)$. On $M \oplus N$, consider the connections
$$\delta_{\oplus} = \begin{pmatrix} \delta_M & 0 \\ 0 & \delta_N \end{pmatrix}; \hspace{1cm} \delta_f = \begin{pmatrix} 0 & f \\ 0 & 0 \end{pmatrix}.$$
The couple $(\delta_{\oplus}, \delta_f)$ is localy $\mu$-nilpotent. Indeed, for given $i \in I$ (assocated to $M \oplus N$) and $\alpha \in \N$, let $m_0$ be such that for $m \geq m_0$, expression \eqref{eqnilp} is in $T^{\alpha}\AAA$ for $\delta_{\oplus}$. Take $m_0' = m_0 + 2$. As soon as a term in expression \eqref{eqnilp2} contains at least 2 factors coming from $\delta_f$, the term vanishes. 
We thus obtain $\cone(f) = M \oplus N$ endowed with the connection
$$\delta_{\cone(f)} = \begin{pmatrix} \delta_M & f \\ 0 & \delta_N \end{pmatrix}$$
in $\Tw(\AAA)$.
Put $\BBB = \{ M, N\} \subseteq \Tw(\AAA)$, endowed with $cA_{\infty}$-structure $\eta = \embr_{\delta}(\mu)$. In $\Tw(\BBB)$, we consider $\cone_{\BBB}(f) = M \oplus N$ endowed with connection
$\psi_f =  \begin{pmatrix} 0 & f \\ 0 & 0 \end{pmatrix}$. We have $\varphi(\cone_{\BBB}(f)) = \cone(f)$. The curvature of $\cone_{\BBB}(f)$ is
$$c = (\eta_0)_{M \oplus N} + \eta_1(\psi_f) = \begin{pmatrix} (\eta_0)_M & \eta_1(f) \\ 0 & (\eta_0)_N \end{pmatrix}.$$
According to Proposition \ref{propcomp}, the curvature of $\cone(f) \in \Tw(\AAA)$ is 
$$\varphi(c) = \begin{pmatrix} c_M & \embr_{\delta}(\mu)(f) \\ 0 & c_N \end{pmatrix}.$$
\end{example}

\begin{remark}
Let $\Tw(\AAA)_{\infty} \subseteq \Tw(\AAA)$ be the full subcategory of \emph{uncurved} objects, i.e. objects $M$ with $c_M = 0$. By Examples \ref{exshiftsum} and \ref{excone}, the category $H^0(\Tw(\AAA)_{\infty})$ naturally becomes a triangulated category. Suitable subcategories of $\Tw(\AAA)_{\infty}$ can be used to model familiar triangulated categories. For instance, for an $A_{\infty}$-category $\AAA$ and $T = 0$, the full subcategory $\Tw_{ut}(\AAA)_{\infty}$ of uncurved upper triangular objects in the sense of Example \ref{exiln} models the derived category of $\AAA$, that is, $H^0(\Tw_{ut}(\AAA)_{\infty}) \cong D(\AAA)$ (see also \cite{lowen9}, \cite{dedekenlowen2}).
\end{remark}

\subsection{Two-sided cones} \label{partwocone}

Let $(\AAA, \mu)$ be a $cA_{\infty}$-category.
In this section, we discuss a cone-like construction in $\Tw(\AAA)$. A similar construction was considered in \cite{kellerlowennicolas} in the context of cdg algebras and modules.
Consider $(M, \delta_M), (N, \delta_N) \in \Tw(\AAA)$ and $f \in \Hom^1(M,N)$, $g \in \Hom^1(N,M)$. On $M \oplus N$, consider the connections
$$\delta_{\oplus} = \begin{pmatrix} \delta_M & 0 \\ 0 & \delta_N \end{pmatrix}; \hspace{1cm} \delta_{f,g} = \begin{pmatrix} 0 & f \\ g & 0 \end{pmatrix}.$$
If $(\delta_{\oplus}, \delta_{f,g})$ is locally $\mu$-nilpotent, we define the twisted object $\cone(f,g) = M \oplus N$ endowed with the connection
$$\delta_{\cone(f,g)} = \begin{pmatrix} \delta_M & f \\ g & \delta_N \end{pmatrix}$$
in $\Tw(\AAA)$.

Let us investigate when $(\delta_{\oplus}, \delta_{f,g})$ is locally $\mu$-nilpotent. In order to evaluate expression \eqref{eqnilp2}, we look at sequences
\begin{equation}\label{eqseq}
(\delta_{\oplus}^{\otimes n_l} , \delta_{f,g}, \delta_{\oplus}^{\otimes n_{l -1}}, \delta_{f,g}, \dots, \delta_{f,g}, \delta_{\oplus}^{\otimes n_1}),
\end{equation}
where $\delta_{\oplus}^{\otimes k}$ stands for $k$ consecutive entries $\delta_{\oplus}$ in the sequence.
Letting $M$ correspond to $0$ and $N$ to $1$, we systematically look at the four blocks $00$, $01$, $10$ and $11$ for elements in $\Hom(M \oplus N, M \oplus N)$. 
For a sequence $(s_p, s_{p-1}, \dots, s_1)$ like \eqref{eqseq}, we are interested in sequences
$((s_p)_{k_p k_{p-1}}, (s_{p-1})_{k_{p-1}, k_{p-2}}, \dots, (s_1)_{k_1, k_0})$ for $k_i \in \{0, 1\}$, which do not contain zero blocks. For \eqref{eqseq}, the only such sequences are
\begin{equation}\label{eqkey}
(\delta_M^{\otimes n_l}, f, \delta_N^{\otimes n_{l-1}}, g, \delta_M^{\otimes n_{l-2}}, \dots, g, \delta_M^{\otimes n_{1}})
\end{equation}
and the variants starting and or ending with a power of $\delta_N$.

\begin{proposition}\label{propfg}
Suppose for $\alpha \in \N$, there is an $m_0 \in \N$ such that $\mu_p(h_p, \dots, h_1) \in T^{\alpha}\AAA$ as soon as there are $m_0$ different indices $i$ for which $h_i = g$. Then $(\delta_{\oplus}, \delta_{f,g})$ is locally $\mu$-allowable. In particular, if $g \in T^1\AAA$, then $(\delta_{\oplus}, \delta_{f,g})$ is locally $\mu$-allowable.
\end{proposition}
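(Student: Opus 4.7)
The plan is to verify Definition \ref{defnilp}(ii) for $(\delta_{\oplus}, \delta_{f,g})$ on $M \oplus N$. I would fix $\alpha \in \N$ and a source index $i$ of $M \oplus N$, without loss of generality in the $M$-component, and let $m_g = m_g(\alpha)$ be the integer supplied by the hypothesis. The starting observation, already made in the discussion preceding the proposition, is that a non-vanishing matrix-entry contribution to \eqref{eqnilp2} is indexed by a chain of the shape \eqref{eqkey}: the $f$'s and $g$'s strictly alternate, and the blocks of $\delta_{\oplus}$'s are pure (all $\delta_M$ or all $\delta_N$). If $q_f, q_g$ denote the counts of $f$'s and $g$'s in such a chain, then $|q_f - q_g| \le 1$, hence $q_g \ge (q-1)/2$.

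I would split the argument into two cases. First, if $q \ge 2 m_g + 1$, then every non-vanishing contribution contains at least $m_g$ factors of $g$, and the hypothesis immediately yields that the corresponding $\mu_p$-value lies in $T^{\alpha}\AAA$. Second, if $q \le 2 m_g$, then $p$ is large and there are at most $2 m_g + 1$ pure $\delta_M$- or $\delta_N$-blocks. Here I would exploit the local $\mu$-allowability of $\delta_M$ and $\delta_N$, which holds since $M, N \in \Tw(\AAA)$: by Definition \ref{defnilp}(i) applied to $\delta_M$ (respectively $\delta_N$), absorbing all remaining factors $f, g, \delta_M, \delta_N$ of the chain into the surrounding $b$'s and $c$'s, one sees that as soon as any pure block has length exceeding the local-allowability threshold $m(i', \alpha)$ at its right-hand index $i'$, the entire contribution lies in $T^{\alpha}\AAA$.

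The remaining task is to produce a single uniform $m_0'$ forcing some block to exceed its threshold. I would construct inductively, for $r = 0, 1, \ldots, 2 m_g$, a finite set $I_r$ of indices that can arise as the right-hand index of the $(r+1)$-st pure block reading from the right, together with a uniform bound $L_r = \max_{i' \in I_r} m(i', \alpha)$. One sets $I_0 = \{i\}$; having built $I_r$ and $L_r$, one takes $I_{r+1}$ to be the (finite) set of indices reachable from some $i' \in I_r$ by a $\delta_M$- or $\delta_N$-chain of length less than $L_r$ followed by one $f$- or $g$-step, finiteness following from column-finiteness of the matrices of $\delta_M, \delta_N, f$, and $g$. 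Setting $m_0' = (2 m_g + 1)\max_{0 \le r \le 2 m_g} L_r + 2 m_g + 1$, pigeonhole then forces at least one pure block to be long enough to trigger its own local $\mu$-allowability bound, concluding Case $2$. The hard part will be maintaining the finiteness of the sets $I_r$ and matching each block to the correct starting index; this is what column-finiteness and the flexibility of Definition \ref{defnilp}(i) (which allows arbitrary data surrounding the $\delta$-block) are meant to resolve. For the ``in particular'' clause, $R$-multilinearity of $\mu_p$ gives $\mu_p(h_p, \ldots, h_1) \in T^{\alpha}\AAA$ as soon as $\alpha$ of the $h_i$ equal $g \in T^1 \AAA$, so the hypothesis holds with $m_g(\alpha) = \alpha$.
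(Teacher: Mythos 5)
Your proof is correct and takes essentially the same approach as the paper, whose entire proof of this proposition is the one sentence ``Using \eqref{eqkey}, his is proven in a similar way as Proposition \ref{propnilpallow}.'' Your case split on the number of $\delta_{f,g}$-factors --- many $g$'s handled by the hypothesis via the alternation forced by \eqref{eqkey}, few $g$'s handled by the pigeonhole argument over finitely many pure $\delta_M$/$\delta_N$-blocks with the inductive finite-reachable-index-set construction --- together with the multilinearity argument for the ``in particular'' clause, is precisely the intended expansion of that sentence.
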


\begin{proof}
Using \eqref{eqkey}, his is proven in a similar way as Proposition \ref{propnilpallow}.
\end{proof}
Suppose $(\delta_{\oplus}, \delta_{f,g})$ is locally $\mu$-allowable. 
Put $\BBB = \{ M, N\} \subseteq \Tw(\AAA)$, endowed with $cA_{\infty}$-structure $\eta = \embr_{\delta}(\mu)$. In $\Tw(\BBB)$, we consider $\cone_{\BBB}(f,g) = M \oplus N$ endowed with connection
$\psi_{f,g} =  \begin{pmatrix} 0 & f \\ g & 0 \end{pmatrix}$. We have $\varphi(\cone_{\BBB}(f,g)) = \cone(f,g)$. The curvature of $\cone_{\BBB}(f,g)$ is $c = \sum_{n \in \N} \eta_n( \psi_{f,g}^{\otimes n} )$. We have
\begin{equation}\label{eqeven}
\eta_n(\psi^{\otimes n}_{f,g}) = \begin{pmatrix} \eta_n(f,g,\dots,g) & 0 \\ 0 & \eta_n(g,f,\dots,f) \end{pmatrix}  \hspace{1cm} n \,\, \text{even,}
\end{equation}
\begin{equation}\label{eqodd}
\eta_n(\psi^{\otimes n}_{f,g}) = \begin{pmatrix} 0 & \eta_n(f,g, \dots,f) \\ \eta_n(g,f, \dots,g) & 0 \end{pmatrix} \hspace{1cm} n \,\, \text{odd}.
\end{equation}
In particular, 
\begin{equation}\label{eq00}
\eta_0(\psi^{\otimes 0}_{f,g}) = (\eta_0)_{M \oplus N} = \begin{pmatrix} (\eta_0)_M & 0 \\ 0 & (\eta_0)_N \end{pmatrix}.
\end{equation}
We thus obtain:
$$\begin{aligned}
c & = \begin{pmatrix} (\eta_0)_M & 0 \\ 0 & (\eta_0)_N \end{pmatrix} + \begin{pmatrix} 0 & \eta_1(f) \\ \eta_1(g) & 0 \end{pmatrix} + \begin{pmatrix} \eta_2(f,g) & 0 \\ 0 & \eta_2(g,f) \end{pmatrix}\\
& + \begin{pmatrix} 0 & \eta_3(f,g,f) \\ \eta_3(g,f,g) & 0 \end{pmatrix} + \begin{pmatrix} \eta_4(f,g,f,g) & 0 \\ 0 & \eta_4(g,f,g,f) \end{pmatrix} + \dots\\
& = \begin{pmatrix} (\eta_0)_M + \eta_2(f,g) + \eta_4(f,g,f,g) + \dots & \eta_1(f) + \eta_3(f,g,f) + \dots \\ \eta_1(g) + \eta_3(g,f,g) + \dots  & (\eta_0)_N + \eta_2(g,f) + \eta_4(g,f,g,f) + \dots \end{pmatrix}.
\end{aligned}$$
According to Proposition \ref{propcomp}, the curvature of $\cone(f) \in \Tw(\AAA)$ is $\varphi(c)$. For $g = 0$, we recover Example \ref{excone}.

The condition imposed in Proposition \ref{propfg} in order to obtain local $\mu$-allowablity of $(\delta_{\oplus}, \delta_{f,g})$ is quite restrictive in general. For instance in case $M = N$, then for $g \in \Hom^1(M,M)$, it is not sufficient that $g$ is upper triangular in order to fulfil this property. On the other hand, if $\mu$ is such that for certain $n_0 \in \N$ we have $\mu_n = 0$ for $n \geq n_0$, then every $g \in \Hom^1(M, N)$ satisfies the condition.

\subsection{Removing curvature}\label{parremcurv}

Next we describe a procedure for building uncurved objects starting from curved objects, based upon \S \ref{partwocone}. 

Let $(\AAA, \mu)$ be a small $R$-linear $cA_{\infty}$-category. A \emph{strict unit} for $A \in \AAA$ is an element $1_A \in \AAA^0(A,A)$ with
\begin{enumerate}
\item[(U1)] $\mu_1(1_A) = 0$;
\item[(U2)] $\mu_2(1_A, a) = a$ and $\mu_2(1_A, b) = b$;
\item[(Un)] $\mu_n(a_{n-1}, \dots, 1_A, \dots, a_1) = 0$
\end{enumerate}
for all $n \geq 3$ and for all $a, b, a_1, \dots, a_{n-1} \in \AAA$ for which the expressions make sense.
We call $\AAA$ \emph{strictly unital} if every object $A \in \AAA$ has a strict unit $1_A$. 
Consider $(\Tw(\AAA), \eta = \embr_{\delta}(\mu))$. If $\AAA$ is strictly unital, then so is $\Tw(\AAA)$ and the strict unit $1_M$ for $(M = \oplus_{i \in I} \Sigma^{n_i} A_i, \delta_M)$ corresponds to the diagonal matrix with $(1_M)_{ii} = 1_{A_i} \in \AAA^0(A_i, A_i)$ (see for instance \cite{dedekenlowen2}).

Now suppose $\AAA$ is strictly unital and consider $(M, \delta_M) \in \Tw(\AAA)$.
Suppose $c_M = (\eta_0)_M$ is divisible by a certain element $r \in R$, i.e. there exists an element $\frac{c_M}{r} \in \Hom^2(M,M)$ with $r\frac{c_M}{r} = c_M$. Consider $M_r = M \oplus \Sigma^{-1}M$ and
$$\delta_{M_r} = \begin{pmatrix} \delta_M & \frac{c_M}{r} \\ -r1_M & - \delta_M \end{pmatrix} \in \Hom^1({M}_r, {M}_r).$$

\begin{proposition}\label{key}
\begin{enumerate}
\item On $M_r$, $(\delta_{\oplus}, \delta_{M_r} - \delta_{\oplus})$ is locally $\mu$-nilpotent and we have $(M_r, \delta_{M_r}) = \cone(\frac{c_M}{r},r1_M) \in \Tw(\AAA)$. 
\item We have $$c_{{M}_r} = 0 \iff \eta_1(\frac{c_M}{r}) = 0.$$
\item The condition that $\eta_1(\frac{c_M}{r}) = 0$ is fulfilled if $r$ is regular with respect to the $R$-module $\Hom^3(M,M)$, i.e. if for $m \in \Hom^3(M,M)$ we have that $rm = 0$ implies $m = 0$. In particular it is fulfilled if $r \in R$ is a regular element and $\Hom^3(M,M)$ is a flat $R$-module, and it is always fulfilled for $r = 1 \in R$.
\end{enumerate}
\end{proposition}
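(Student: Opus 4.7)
My plan is to treat all three assertions as a direct calculation within the two-sided cone formalism of \S \ref{partwocone}, combined with strict unitality and the curved $A_\infty$-relation. Throughout, set $f = c_M/r \in \Hom^2(M,M)$ and $g = -r\,1_M \in \Hom^1(M,M)$, so that $\delta_{M_r} = \delta_{\oplus} + \delta_{f,g}$ with $\delta_{f,g}$ exactly as in that subsection; then tautologically $(M_r, \delta_{M_r}) = \cone(f,g) \in \Tw(\AAA)$ as soon as local $\mu$-nilpotency of $(\delta_\oplus, \delta_{f,g})$ is established.

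For (1), I would verify \eqref{eqnilp2} directly using strict unitality, rather than relying on the sufficient condition $g \in T^1 \AAA$ from Proposition \ref{propfg} (which can fail when $r \notin T$). Each matrix entry of $g$ is a scalar multiple of $1_M$, so by (U1) and (Un), every $\mu_p$ with $p = 1$ or $p \geq 3$ evaluated on a sequence containing such an entry vanishes. As soon as $m \geq 3$, the total arity $l + m + n$ of the ambient $\mu$ is at least $3$; invoking the enumeration \eqref{eqkey} of non-vanishing matrix-entry paths, each term of \eqref{eqnilp2} with $q \geq 1$ carries at least one $g$-entry and therefore dies. The remaining case $q = 0$ reduces to local $\mu$-allowability of $\delta_\oplus$, which is inherited from that of $\delta_M \in \hat\Delta_M$. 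Taking $m_0$ to be the maximum of $3$ and the bound for $\delta_M$ then suffices.

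For (2), the plan is to specialise the explicit block-by-block formula for $c_{\cone(f,g)}$ derived at the end of \S \ref{partwocone}. Strict unitality of $\Tw(\AAA)$ kills every term $\eta_n(\ldots, g, \ldots)$ with $n \geq 3$ together with $\eta_1(g) = -r\,\eta_1(1_M) = 0$, leaving only $\eta_2(f,g)$, $\eta_2(g,f)$, $\eta_1(f)$, and the diagonal curvatures $(\eta_0)_M,(\eta_0)_N$. By (U2), $\eta_2(f,g) = -r f = -c_M$ and $\eta_2(g,f) = -c_M$, so both diagonal blocks cancel against $(\eta_0)_M$ and $(\eta_0)_N$ (the sign match between the minus signs chosen in $\delta_{M_r}$ and the shift convention of Example \ref{exshiftsum} is precisely what makes this cancellation work). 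The bottom-left block collapses to $\eta_1(g) = 0$, and the top-right to $\eta_1(c_M/r)$, yielding the stated equivalence.

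For (3), I would read off the curved $A_\infty$-equation $\eta \bullet \eta = 0$ in Hochschild degree $0$, which reduces to $\eta_1(\eta_0) = 0$ and hence $\eta_1(c_M) = 0$ at $M$. Since $\eta_1$ is $R$-linear and $r \cdot (c_M/r) = c_M$, this yields $r \cdot \eta_1(c_M/r) = 0$ in $\Hom^3(M,M)$; regularity of $r$ on that module then forces $\eta_1(c_M/r) = 0$. Flatness of $\Hom^3(M,M)$ together with regularity of $r$ in $R$ produces such regularity by tensoring the exact sequence $0 \to R \xrightarrow{\,r\,} R$, and the case $r = 1$ is trivial. The main obstacle I anticipate is the sign bookkeeping in (2), specifically the cancellation in the $\Sigma^{-1}M$-block, which hinges on the compatibility between the sign conventions built into $\delta_{M_r}$ and those governing curvature under shifts; every other step is a clean consequence of strict unitality and the $cA_\infty$-relation.
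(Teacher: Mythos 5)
Parts (2) and (3) of your proposal are correct and coincide with the paper's own argument: the same cancellation $c_0+c_2=0$ via (U2) (using that the curvature of $\Sigma^{-1}M$ equals $c_M$), the same identification of the sole surviving block with $\eta_1(\frac{c_M}{r})$, and the same derivation of $r\,\eta_1(\frac{c_M}{r})=\eta_1(c_M)=((\eta\bullet\eta)_0)_M=0$ followed by regularity.

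Part (1), however, contains a genuine error. You dismiss Proposition \ref{propfg} on the grounds that its criterion ``$g\in T^1\AAA$'' fails for $r\notin T$, but that is only the ``in particular'' clause; the primary hypothesis of Proposition \ref{propfg} --- that $\mu_p(h_p,\dots,h_1)\in T^{\alpha}\AAA$ whenever $m_0$ of the arguments equal $g$ --- is exactly what is needed here, and the paper's entire proof of (1) consists of checking it with $m_0=3$: three occurrences of $g=-r1_M$ force $p\geq 3$, and then (Un) kills the term. Your substitute argument asserts that every term of \eqref{eqnilp2} with $q\geq 1$ carries a $g$-entry, and this is false. By the alternation structure \eqref{eqkey}, the numbers of $f$- and $g$-entries along a non-vanishing path differ by at most one, so for $q\geq 2$ every such path indeed contains a $g$; but for $q=1$ the single $\delta_{f,g}$-entry may be $f=\frac{c_M}{r}$, which is not a multiple of the unit, and the resulting terms $\mu(\dots,\delta_{\oplus}^{\otimes a},f,\delta_{\oplus}^{\otimes b},\dots)$ survive strict unitality. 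These are not covered by your ``$q=0$'' case either: one must argue that $a$ or $b$ exceeds the allowability bound for $\delta_{\oplus}$ (a pigeonhole on run-lengths, with the index-tracking of the proof of Proposition \ref{propnilpallow}, since the bound $m_0$ in Definition \ref{defnilp} depends on the starting index, which changes across the $f$-entry). This is precisely the content of the proof of Proposition \ref{propfg}, so the clean fix is to verify its general hypothesis rather than to rederive it; as written, your case analysis for (1) has a hole.
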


\begin{proof}
Clearly, $g = -r1_M$ satisfies the condition in Proposition \ref{propfg} by (Un) for $n \geq 3$ (take $m_0 = 3$). This already shows (1).
Put $\BBB = \{ M \} \subseteq \Tw(\AAA)$ with the $cA_{\infty}$-structure $\eta = \embr_{\delta}(\mu)$. Consider $N = M \oplus \Sigma^{-1}M \in \Tw(\BBB)$ with
$$\psi_{N} = \begin{pmatrix} 0 & \frac{c_M}{r} \\ -r1_M & 0 \end{pmatrix}.$$
Then under the map $\varphi$ from Proposition \ref{propcomp}, we have $\varphi(N, \delta_N) = (M_r, \delta_{M_r})$.  Let $\eta$ denote the $cA_{\infty}$-structure on $\BBB$, with $(\eta_0)_M = c_M$. Put $c_n = \eta_n(\psi_N^{\otimes n})$. By Proposition \ref{propcomp}, it suffices to show for $c = \mathrm{embr}_{\psi_N}(\eta)_0 = \sum_{n \in \N} c_n$ that $c = 0$.
We compute $c$ using \eqref{eqeven} and \eqref{eqodd}. By (Un), $c_n = 0$ for $n \geq 3$.
Further, we have
$$c_0 = \begin{pmatrix} c_M & 0 \\ 0 & c_M \end{pmatrix},$$
using (U1), we have
$$c_1 = \begin{pmatrix} 0 & \eta_1(\frac{c_M}{r}) \\ -r\eta_1(1_M) & 0 \end{pmatrix} = \begin{pmatrix} 0 & \eta_1(\frac{c_M}{r}) \\ 0 & 0 \end{pmatrix}$$
and using (U2) we have
$$c_2 = \begin{pmatrix} -r\eta_2(\frac{c_M}{r},1_M) & 0 \\ 0 & -r\eta_2(1_M, \frac{c_M}{r}) \end{pmatrix} = \begin{pmatrix} -r\frac{c_M}{r} & 0 \\ 0 & -r\frac{c_M}{r}   \end{pmatrix}.$$
By definition of $\frac{c_M}{r}$, we thus have $c_0 + c_2 = 0$ and $c = c_1$, proving (2).

For part (3) of the claim, we first note that
$$\eta_1(c_M) = \eta_1((\eta_0)_M) = ((\eta \bullet \eta)_0)_M = 0$$
since $\eta$ is a $cA_{\infty}$-structure. 
So $$0 = \eta_1(c_M) = \eta_1(r\frac{c_M}{r}) = r\eta_1(\frac{c_M}{r}) \in \Hom^3(M, M).$$ If $r$ is regular with respect to $\Hom^3(M, M)$, it follows that $\eta_1(\frac{c_M}{r}) = 0$, which finishes the proof.
\end{proof}

\subsection{Constructing closed morphisms}\label{parclosed}
The construction in \S \ref{parremcurv} has a parallel story for morphisms of $cA_{\infty}$-category, which will be used later on in \S \ref{parinfty}. Let $\AAA$ be a small, strictly unital $R$-linear $cA_{\infty}$-category and consider $(M, \delta_M)$, $(N, \delta_N) \in (\Tw(\AAA), \eta = \embr_{\delta}(\mu))$ and $f \in \Hom(M,N)$. Suppose $c_M$, $c_N$ and $\eta_1(f)$ are divisible by $r \in R$ and consider the resulting objects $(M_r, \delta_{M_r})$, $(N_r, \delta_{N_r}) \in \Tw(\AAA)$. 
Consider the morphism
$$f_r = \begin{pmatrix} f & \frac{\eta_1(f)}{r} \\ 0 & -f \end{pmatrix} \in \Hom(M_r, N_r).$$

\begin{lemma}\label{lemconee}
There is a canonical isomorphism $\sigma \in \Hom(\cone(f_r), \cone(f)_r)$ given by the permutation of the middle two factors $$\sigma \in \Hom(N \oplus \Sigma^{-1}N \oplus \Sigma M \oplus M, N \oplus \Sigma M \oplus \Sigma^{-1} N \oplus M)$$ fitting into an $\eta_2$-commutative diagram
$$\xymatrix{{M_r = M \oplus \Sigma^{-1} M} \ar[d]_{f_r} \ar[r] & {\Sigma^{-1} M} \ar[d]^{-f} \\
{N_r = N \oplus \Sigma^{-1}N} \ar[r] \ar[d] & {\Sigma^{-1} N} \ar[d] \\ 
{\cone(f_r) \cong_{\sigma} \cone(f)_r} \ar[r] & {\Sigma^{-1} \cone(f)}. }$$
\end{lemma}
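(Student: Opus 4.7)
Both claims are verified by direct bookkeeping with $4\times 4$ block matrices; the essential structural input is that the four nonzero blocks of $\sigma$ are strict units, so that in any brace expression involving them only $\mu_1$ and $\mu_2$ contribute, by (U1) and (Un) for $n\geq 3$.

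I first unwind both sides as four-fold direct sums. By Example~\ref{excone} applied to $f_r\in \Hom(M_r,N_r)$,
$$\cone(f_r) \;=\; N_r \oplus \Sigma M_r \;=\; N \oplus \Sigma^{-1}N \oplus \Sigma M \oplus M,$$
and by Proposition~\ref{key} applied to $\cone(f)=N\oplus \Sigma M$ --- whose curvature
$$c_{\cone(f)} \;=\; \begin{pmatrix} c_N & \eta_1(f) \\ 0 & -c_M \end{pmatrix}$$
is entrywise divisible by $r$ by the hypothesis of the lemma ---
$$\cone(f)_r \;=\; \cone(f) \oplus \Sigma^{-1}\cone(f) \;=\; N \oplus \Sigma M \oplus \Sigma^{-1}N \oplus M.$$
Local $\mu$-allowability of each construction follows from Proposition~\ref{propfg}, since the ``$g$''-entry in every cone involved is $\pm r\cdot 1$. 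Writing both connections as $4\times 4$ matrices, with entries drawn from $\delta_M,\delta_N,f,\tfrac{c_M}{r},\tfrac{c_N}{r},\tfrac{\eta_1(f)}{r}$ and the $\pm r 1$'s, I observe by inspection that they are related precisely by the simultaneous transposition of the middle row and the middle column. Taking $\sigma$ to be the permutation matrix implementing this transposition yields $\sigma\cdot \delta_{\cone(f_r)} = \delta_{\cone(f)_r}\cdot \sigma$. By the unit collapse, $\eta_1(\sigma)$ reduces to $\mu_2(\delta,\sigma)-\mu_2(\sigma,\delta)$, which vanishes by this equality; so $\sigma$ is closed. Its reverse permutation is an explicit closed inverse, making $\sigma$ an isomorphism in $\Tw(\AAA)$.

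For the $\eta_2$-commutative diagram, I interpret each horizontal arrow as the canonical ``second-summand'' projection of a two-sided cone: $M_r \twoheadrightarrow \Sigma^{-1}M$, $N_r \twoheadrightarrow \Sigma^{-1}N$, and (after precomposing with $\sigma$) $\cone(f)_r \twoheadrightarrow \Sigma^{-1}\cone(f)$. The top square reduces, via the block form of $f_r$, to the identity $(0\;\;1_N)\cdot f_r = (-f)\cdot(0\;\;1_M)$, which is immediate since the bottom-right block of $f_r$ is $-f$. The bottom square reduces to comparing two $4\times 2$ block compositions of inclusions and projections; again the unit collapse kills all higher brace terms and the surviving $\mu_2$-compositions match on the nose.

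The only real nuisance is the sign bookkeeping forced by the $(-1)^{n_i}$ convention for shifts (Example~\ref{exshiftsum}), which propagates through the $-\delta_M$ in $\delta_{\Sigma M_r}$, the $-f$ in $f_r$, and the various $\pm r 1$ entries; once a consistent convention is fixed, every claim becomes a routine matrix verification.
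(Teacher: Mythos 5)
Your proposal is correct and is essentially the paper's own argument: the paper dismisses this lemma with ``This is a direct verification,'' and your $4\times 4$ block-matrix bookkeeping, the unit-collapse reduction to $\mu_1$ and $\mu_2$, and the identification of $\sigma$ as the transposition of the middle two summands is precisely that verification carried out. The residual sign discrepancies you flag (which may force $\sigma$ to be a \emph{signed} permutation) are covered by the paper's stated policy of suppressing signs, so nothing further is needed.
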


\begin{proof}
This is a direct verification.
\end{proof}

\begin{proposition}\label{propcone}
Suppose $r$ is regular with respect to the $R$-modules $\Hom(M,M)$, $\Hom(N,N)$ and $\Hom(M,N)$. Then $c_{M_r} = 0$, $c_{N_r} = 0$, $\eta_1(f_r) = 0$ and $c_{\cone(f)_r} = 0$.
\end{proposition}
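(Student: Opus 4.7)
The plan is to reduce all four claims to Proposition \ref{key}, combining it with the two-sided cone construction of Example \ref{excone} and the comparison isomorphism of Lemma \ref{lemconee} to treat the statements about $M_r$, $N_r$, $f_r$ and $\cone(f)_r$ in parallel.

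The vanishings $c_{M_r}=0$ and $c_{N_r}=0$ are immediate from Proposition \ref{key}(3) applied to $M$ and $N$: the regularity of $r$ with respect to $\Hom^3(M,M)$ and $\Hom^3(N,N)$ is exactly what that part of Proposition \ref{key} requires.

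Next I apply Proposition \ref{key} to $\cone(f)\in\Tw(\AAA)$. By Example \ref{excone}, $c_{\cone(f)}=\bigl(\begin{smallmatrix} c_M & \eta_1(f)\\ 0 & c_N \end{smallmatrix}\bigr)$, each entry of which is divisible by $r$ by hypothesis, yielding the natural candidate $c_{\cone(f)}/r=\bigl(\begin{smallmatrix} c_M/r & \eta_1(f)/r\\ 0 & c_N/r \end{smallmatrix}\bigr)$. By Proposition \ref{key}(2), proving $c_{\cone(f)_r}=0$ reduces to showing $\eta_1(c_{\cone(f)}/r)=0$. As in the proof of Proposition \ref{key}(3), the identity $\eta\bullet\eta=0$ gives $r\cdot\eta_1(c_{\cone(f)}/r)=\eta_1(c_{\cone(f)})=0$ in $\Hom^3(\cone(f),\cone(f))=\Hom^3(M,M)\oplus\Hom^3(M,N)\oplus\Hom^3(N,M)\oplus\Hom^3(N,N)$. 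The three regularity hypotheses in the statement force the $\Hom(M,M)$, $\Hom(M,N)$ and $\Hom(N,N)$ blocks of $\eta_1(c_{\cone(f)}/r)$ to vanish. For the remaining $\Hom(N,M)$ block I invoke a block-triangularity observation: both $\delta_{\cone(f)}$ and $c_{\cone(f)}/r$ are upper triangular with respect to the decomposition $\cone(f)=M\oplus N$ (i.e., their $(N,M)$ entries vanish), and since each $\mu_{n+1}\{\delta_{\cone(f)}^{\otimes n}\}(c_{\cone(f)}/r)$ is assembled entrywise from block entries of these upper-triangular matrices, any contribution to the $(N,M)$ block would have to pass through a vanishing $(N,M)$ entry of one of the inputs. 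Hence the $(N,M)$ block is automatically zero, so $\eta_1(c_{\cone(f)}/r)=0$ and $c_{\cone(f)_r}=0$.

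Finally, Lemma \ref{lemconee} supplies a strict isomorphism $\sigma:\cone(f_r)\to\cone(f)_r$ in $\Tw(\AAA)$, along which curvatures correspond; hence also $c_{\cone(f_r)}=0$. Applying the cone curvature formula of Example \ref{excone} to $\cone(f_r)=M_r\oplus N_r$ yields $c_{\cone(f_r)}=\bigl(\begin{smallmatrix} c_{M_r} & \eta_1(f_r)\\ 0 & c_{N_r} \end{smallmatrix}\bigr)$, and reading off the $(M_r,N_r)$ block gives the last remaining vanishing $\eta_1(f_r)=0$. The sole non-routine step is the block-triangularity observation, which is exactly what lets one make do with three rather than four regularity hypotheses.
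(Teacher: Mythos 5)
Your proposal is correct, but the core of the argument runs in the opposite direction from the paper's. The paper first proves $\embr_{\delta}(\eta)_1(f_r)=0$ by brute force: it expands the differential of $f_r$ in $\Tw(\AAA)$ as a sum of three matrix terms (all higher $\eta_k$ dying by strict unitality), observes that the only surviving entry is $\gamma = \eta_1(\tfrac{\eta_1(f)}{r}) + \eta_2(\tfrac{c_N}{r},f)+\eta_2(f,\tfrac{c_M}{r})$, and kills it via $r\gamma = (\eta\bullet\eta)_1(f)=0$ together with regularity on $\Hom(M,N)$; the vanishing of $c_{\cone(f_r)}$ and then of $c_{\cone(f)_r}$ are deduced afterwards. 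You instead treat $\cone(f)$ as a single object of $\Tw(\AAA)$, apply Proposition \ref{key}(2) to it, and use the Bianchi identity $\eta_1(c_{\cone(f)})=0$ at the level of $\cone(f)$; you then transport the conclusion through $\sigma$ and read off $\eta_1(f_r)=0$ from the block form of $c_{\cone(f_r)}$. This is more uniform and avoids the explicit unitality computation, but it forces you to confront the fact that regularity is only assumed on three of the four Hom-blocks of $\Hom^3(\cone(f),\cone(f))$; your block-triangularity observation for the $(N,M)$-block is the correct fix (all nonzero block entries of $\delta_{\cone(f)}$ and of $c_{\cone(f)}/r$ go $M\to M$, $M\to N$ or $N\to N$, so no composable chain can produce an $N\to M$ component), and it plays exactly the role that the identical cancellation $-rf+rf=0$ in the lower-left entry plays in the paper's explicit computation. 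The paper's route has the mild advantage of exhibiting the obstruction concretely as $\tfrac{1}{r}(\eta\bullet\eta)_1(f)$, which makes it transparent why regularity on $\Hom(M,N)$ is precisely the hypothesis needed; yours has the advantage of reusing Proposition \ref{key} verbatim on the object $\cone(f)$ rather than redoing its proof for a morphism.
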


\begin{proof}
By proposition \ref{key}, we have $c_{M_r} = 0$, $c_{N_r} = 0$ and by Lemma \ref{lemconee} and Example \ref{excone}, we have
$$c_{\cone(f)_r} \cong c_{\cone(f_r)} = \begin{pmatrix} c_{M_r} & \eta_1(f_r) \\ 0 & c_{N_r} \end{pmatrix}.$$
Hence, it suffices to show that $c_{\cone(f_r)} = 0$. 
Now let $\BBB = \{M, N\} \subseteq \Tw(\AAA)$ be endowed with $\eta = \embr_{\delta}(\mu)$ and consider $M_r, N_r \in \Tw(\BBB)$. Further, let $\BBB_r = \{ M_r, N_r\} \subseteq \Tw(\BBB)$ be endowed with $\embr_{\delta}(\eta)$ and consider $\cone(f_r) \in \Tw(\BBB_r)$.
We thus have
$$c_{\cone(f_r)} = \begin{pmatrix} c_{M_r} & \mathrm{embr}_{\delta}(\eta)_1(f_r) \\ 0 & c_{N_r} \end{pmatrix}.$$
Since $c_{M_r} = 0$ and $c_{N_r} = 0$, it remains to prove that $\mathrm{embr}_{\delta}(\eta)_1(f_r) = 0$.
We calculate
$$\begin{aligned}
\mathrm{embr}_{\delta}(\eta)_1(f_r) & = \eta_1(\begin{pmatrix} f & \frac{\eta_1(f)}{r} \\ 0 & -f \end{pmatrix})\\
& + \eta_2(\begin{pmatrix} 0 & \frac{c_N}{r} \\ -r1_N & 0 \end{pmatrix} , \begin{pmatrix} f & \frac{\eta_1(f)}{r} \\ 0 & -f \end{pmatrix}) + \eta_2(\begin{pmatrix} f & \frac{\eta_1(f)}{r} \\ 0 & -f \end{pmatrix}, \begin{pmatrix} 0 & \frac{c_M}{r} \\ -r1_M & 0 \end{pmatrix}) \\
& + \eta_3(\delta_N, \delta_N, f_r) + \eta_3(\delta_N, f_r, \delta_M) + \eta_3(f_r, \delta_M, \delta_M) + \dots\\
& = \begin{pmatrix} \eta_1(f) & \eta_1(\frac{\eta_1(f)}{r}) \\ 0 & -\eta_1(f) \end{pmatrix}\\
& + \begin{pmatrix} 0 & -\eta_2(\frac{c_N}{r}, f) \\ -rf & -r\frac{\eta_1(f)}{r} \end{pmatrix} + \begin{pmatrix} -r\frac{\eta_1(f)}{r} & \eta_2(f, \frac{c_M}{r}) \\ rf & 0 \end{pmatrix}
\end{aligned}$$
where all terms involving $\eta_k$ with $k \geq 3$ vanish since $\eta$ is strictly unital.
The resulting matrix only features a possibly non-zero upper right entry given by
$$\gamma = \eta_1(\frac{\eta_1(f)}{r}) + \eta_2(\frac{(\eta_0)_N}{r}, f) + \eta_2(f, \frac{(\eta_0)_M}{r}).$$
Now we know that 
$$0 = (\eta \bullet \eta)_1(f) = \eta_1(\eta_1(f)) + \eta_2((\eta_0)_N, f) + \eta_2(f, (\eta_0)_M).$$
Since clearly $r\gamma = (\eta \bullet \eta)_1(f) = 0$, by regularity of $r$ we conclude that $\gamma = 0$ as desired.
\end{proof}

\section{Deformations of twisted objects}\label{parpardef}

Let $k$ be a commutative ground ring. We apply the results of \S \ref{parremcurv} to formal deformations ($R = k[[t]]$) in \S \ref{parformdef}. The more subtle case of $n$-th order infinitesimal deformations ($R = k[t]/t^{n+1}$) is treated in \S \ref{parinfdef}. More precisely, in both cases we describe constructions of uncurved deformations of twisted objects relative to a $cA_{\infty}$-deformation of an $A_{\infty}$-category. In the infinitesimal case, in order for our approach to work, the deformation of the original category is of one degree higher than the uncurved twisted object we construct. This throws some light on how the formal construction can be viewed as a $t$-adic limit of the infinitesimal constructions. In \S \ref{parinfty}, we investigate the derived interpretation of our constructions, in the special case where the original deformation is itself $A_{\infty}$ rather than $cA_{\infty}$. Finally, we briefly discuss the case of linear deformations (of which classical algebra deformations are a special case) in \S \ref{parlin}.

\subsection{Formal deformations}\label{parformdef}

Let $\AAA_0$ be a small $k$-quiver. Put $\AAA = k[[t]] \hat{\otimes}_k \AAA_0 = \AAA_0[[t]]$. The complex $\CC(\AAA_0)[[t]]$ inherits a natural brace algebra structure from $\CC(\AAA_0)$, for which the canonical map
$$\CC(\AAA_0)[[t]] \lra \CC(\AAA_0[[t]])$$
becomes an isomorphism of brace algebras. Using this isomorphism, we write a $cA_{\infty}$-structure $\mu$ on $\AAA = \AAA_0[[t]]$ as
$$\mu = \mu^{(0)} + \mu^{(1)}t + \mu^{(2)}t^2 + \dots + \mu^{(n)}t^n + \dots$$
for $\mu^{(k)} \in \CC^2(\AAA_0)$. Hence, the condition $\mu \bullet \mu = 0$ translates into relations between the $\mu^{(k)}$, of which the first relation is $\mu^{(0)} \bullet \mu^{(0)} = 0$.

For $M, N \in \Free(\AAA)$, we have
$$\widehat{\Hom}(M,N) = \Hom_0(M,N)[[t]]$$
and we write a connection on $M \in \Free(\AAA)$ as
$$\delta = \delta^{(0)} + \delta^{(1)}t + \delta^{(2)}t^2 + \dots + \delta^{(n)}t^n + \dots$$
for $\delta^{(k)} \in \Hom_0^1(M,M)$.
Hence, uncurvedness of $\delta$ translates into relations between the $\mu^{(k)}$ and $\delta^{(l)}$, of which the first relation is $\embr_{\delta^{(0)}}(\mu^{(0)}) = 0$.

Now consider $M \in \Free(\AAA_0)$ and connections $\delta, \psi \in \Hom_0^1(M,M)[[t]]$. Consider $\delta^{(0)}, \psi^{(0)} \in \Hom_0^1(M, M) \subseteq \Hom_0^1(M,M)[[t]]$.

\begin{lemma}\label{lemlem}
\begin{enumerate}
\item If for all $n \in \N$, $\delta^{(0)}$ (resp. $(\delta^{(0)}, \psi^{(0)})$) is locally $\mu^{(n)}$-allowable for the discrete topology, then $\delta$ (resp. $(\delta, \psi)$) is locally $\mu$-allowable for the $t$-adic topology.
\item Put $\Phi = \{ \mu^{(n)} \}_{n \in \N}$. If $\delta^{(0)}$ (resp. $(\delta^{(0)}, \psi^{(0)})$) is locally $\Phi$-allowable for the discrete topology, then $\delta^{(0)}$ (resp. $(\delta^{(0)}, \psi^{(0)})$) is locally $\mu$-allowable for the discrete topology.
\end{enumerate}
\end{lemma}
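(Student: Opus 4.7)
My plan is to address the two parts separately, exploiting the different roles played by the topology in hypothesis versus conclusion.

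For part (2), the argument should be essentially immediate. Since $\delta^{(0)}$ and $\psi^{(0)}$ carry no $t$-dependence, expanding $\mu = \sum_n \mu^{(n)} t^n$ inside expression \eqref{eqnilp} (resp.\ \eqref{eqnilp2}) yields a series $\sum_n t^n E_n$, where $E_n$ is the corresponding expression with $\mu^{(n)}$ in place of $\mu$. Local $\Phi$-allowability in the discrete topology ($T = 0$) provides, for each $\alpha$ and $i$, a single $m_0$ uniform over $\phi = \mu^{(n)} \in \Phi$ such that every $E_n$ vanishes as soon as $m \geq m_0$. The whole expression is then identically zero, which trivially lies in $T^{\alpha}\AAA$ for the discrete topology, yielding the conclusion.

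For part (1), I would expand both $\mu = \sum_n \mu^{(n)} t^n$ and $\delta = \sum_k \delta^{(k)} t^k$ (and $\psi = \sum_k \psi^{(k)} t^k$ in the couple case) inside expression \eqref{eqnilp} (resp.\ \eqref{eqnilp2}) and reduce to showing that the $t^s$-coefficient vanishes for every $s < \alpha$ once $m$ is sufficiently large. Such a coefficient is a sum over tuples $(s', r_1, \ldots, r_{m-1})$ with $s' + \sum_i r_i = s$; since $s < \alpha$ and the $r_i$ are nonnegative, at most $\alpha - 1$ of the $r_i$ can be strictly positive. Thus for each fixed tuple, all but at most $\alpha - 1$ of the tensor positions are occupied by $\delta^{(0)}$ (resp.\ by $\delta^{(0)}$ or $\psi^{(0)}$). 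The strategy is now to mimic Proposition \ref{propnilpallow}: process the tensor from one end, use column-finiteness of the connection matrices to reduce at each ``bad'' position (where some $\delta^{(r_i)}$ with $r_i \geq 1$ occurs) to a finite set of relevant intermediate indices, and invoke local $\mu^{(s')}$-allowability of $\delta^{(0)}$ (resp.\ of the couple $(\delta^{(0)}, \psi^{(0)})$) at each sufficiently long run of $\delta^{(0)}$-factors between consecutive bad positions to force vanishing of the partial sum. Taking the maximum of the resulting bounds over the finitely many $s, s' < \alpha$ and over the (at most $\alpha - 1$) possible lengths of chains of bad positions should yield a single $m_0$ that works uniformly.

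The main obstacle is the combinatorial bookkeeping in part (1): unlike the clean uniform argument of part (2), I must carefully track how the total $t$-degree gets distributed between $\mu^{(s')}$ and the individual $\delta^{(k)}$'s, and verify that the inductive procedure produces an $m_0$ independent of the bad-position configuration. The key observation making the induction terminate is precisely that the cap $s < \alpha$ bounds \emph{a priori} the number of non-$\delta^{(0)}$ tensor entries one ever needs to consider, so the argument should run parallel to Proposition \ref{propnilpallow} with this extra parameter incorporated.
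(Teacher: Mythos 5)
Your argument is correct and follows essentially the same route as the paper's proof: for (1) the paper likewise expands $\mu$, $\delta$ and the test elements in powers of $t$, observes that any term contributing below $t^{\alpha}$ has fewer than $\alpha$ entries $\delta^{(\kappa)}$ with $\kappa \geq 1$, pigeonholes a run of at least $\max_{k \leq \alpha} m_{0,k}$ consecutive $\delta^{(0)}$'s, and kills the inner sum over that run using the discrete-topology hypothesis for the finitely many relevant $\mu^{(k)}$. Your additional care with the finite sets of reachable intermediate indices (in the style of Proposition \ref{propnilpallow}) and your explicit treatment of part (2), which the paper's proof leaves to the reader, only make the write-up more complete.
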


\begin{proof}
We prove the statement concerning $\delta$ in (1). Consider expression \eqref{eqnilp} for $\phi = \mu  = \sum_{k \in \N} \mu^{(k)} t^k$. This expression is equal to
\begin{equation}\label{eqnilpform}
\sum_{k \in \N} \sum_{k_{m-1}, \dots, k_1}\mu^{(k)}(c_l \otimes \dots \otimes c_{1} \otimes \delta_{jk_{m-1}} \otimes \dots \delta_{k_{d+1} k_d} \dots \otimes \delta_{k_1 i} \otimes b_{n} \otimes \dots \otimes b_1) t^k
\end{equation}
Let $n_0$ be given as in Definition \ref{defnilp}. Consider the elements $\delta^{(0)}_{k_{d+1}, k_d} \in \AAA_0(A_{k_d}, A_{k_{d+1}})$ as wel as $b^{(0)}_i \in \AAA_0(B_{i-1}, B_i)$ and $c^{(0)}_i \in \AAA_0(C_{i-1}, C_i)$. For every $k \in \N$, by assumption there is an $m_{0,k} \in \N$ such that
\begin{equation}\label{eqnilpform2}
\sum_{k_{m-1}, \dots, k_1}\mu^{(k)}(c^{(0)}_l \otimes \dots \otimes c^{(0)}_{1} \otimes \delta^{(0)}_{jk_{m-1}} \otimes \dots \delta^{(0)}_{k_{d+1} k_d} \dots \otimes \delta^{(0)}_{k_1 i} \otimes b^{(0)}_{n} \otimes \dots \otimes b^{(0)}_1)
\end{equation}
is equal to zero for every $m \geq m_{0,k}$.
Expression \eqref{eqnilpform} is equal to $\sum_{k \in \N} y_k$ with
$$y_k =  \sum_{\kappa_{1}, \dots, \kappa_{{n+m+l}}}\sum_{k_{m-1}, \dots k_1} x^{k_{m-1}, \dots k_1}_{\kappa_{1}, \dots, \kappa_{{n+m+l}}} t^{k + \sum_{i = 1}^{n +m +l} \kappa_{i}}$$
and $x^{k_{m-1}, \dots k_1}_{\kappa_{1}, \dots, \kappa_{{n+m+l}}}$ equal to
$$
\mu^{(k)}(c^{(\kappa_{n+m+l})}_l \otimes \dots \otimes c^{(\kappa_{n+m+1})}_{1} \otimes \delta^{(\kappa_{n+m})}_{jk_{m-1}} \otimes \dots \otimes \delta^{(\kappa_{n+1})}_{k_1 i} \otimes b^{(\kappa_n)}_{n} \otimes \dots \otimes b^{(\kappa_1)}_1).$$
Put $m_0 = n_0\cdot \max\{m_{0, k} \,|\, 0 \leq k \leq n_0\}$. For $m \geq m_0$, in $x^{k_{m-1}, \dots k_1}_{\kappa_{1}, \dots, \kappa_{{n+m+l}}}$ we have either at least $m_{0,k}$ consecutive entries coming from $\delta^{(0)}$, or else we have at least $n_0$ entries in total coming from $\delta^{(\kappa)}$ with $\kappa \geq 1$. Hence, for $k \leq n_0$, $y_k$ is a sum of expressions of type \eqref{eqnilpform2} with are zero by the definition of $m_{0, k}$, and expressions which are in $(t^{n_0})$. For $k \geq n_0$, we also have $y_k \in (t^{n_0})$. This finishes the proof.
\end{proof}

Put $\Phi = \{ \mu^{(n)} \}_{n \in \N}$. Consider $(M, \delta^{(0)}) \in \Tw(\AAA_0)$ and suppose $\delta^{(0)}$ is $\Phi$-allowable. We consider $\delta^{(0)} \in \Hom^1(M,M)$ as a connection on $M \in \Free(\AAA)$ and by Lemma \ref{lemlem}, we obtain $(M, \delta^{(0)}) \in \Tw(\AAA)$. We calculate the curvature
$$
c_M = \embr_{\delta^{(0)}}(\mu) = \sum_{k = 0}^{\infty}(\sum_{n = 0}^{\infty}\mu_k^{(n)}({\delta^{(0)}}^{\otimes k}) t^n) = \sum_{n = 0}^{\infty}c_M^{(n)} t^n
$$
for $$c_M^{(n)} = \sum_{k = 0}^{\infty} \mu_k^{(n)}({\delta^{(0)}}^{\otimes k}).$$
Now suppose $c_M^{(0)} = 0$, i.e. $(M, \delta^{(0)})$ is an uncurved object in $\Tw(\AAA_0)$. Then we have 
$$c_M = t\frac{c_M}{t} \hspace{0,5cm} \text{for} \hspace{0,5cm} \frac{c_M}{t} = \sum_{n = 0}^{\infty} c_M^{(n+1)} t^n.$$
Put $M_t = M \oplus \Sigma^{-1}M \in \Free(\AAA)$ endowed with the connection
$$\delta_{{M}_t} = \begin{pmatrix} \delta^{(0)} & \frac{c_M}{t} \\ -t & - \delta^{(0)} \end{pmatrix} \in \Hom^1({M}_t, {M}_t)$$
and consider the associated connection
$$\delta^{(0)}_{{M_t}} = \begin{pmatrix} \delta^{(0)} & c_M^{(1)} \\ 0 & - \delta^{(0)}  \end{pmatrix} \in \Hom^1_0({M}_t, {M}_t).$$
By definition, $\delta_{{M}_t}$ is an $\AAA$-defomation of $\delta^{(0)}_{{M_t}}$.
Since $t$ is regular with respect to $\Hom_0(M,M)[[t]]$, Proposition \ref{key} yields:

\begin{proposition}\label{propformdef}
Consider an uncurved object $(M, \delta^{(0)}) \in \Tw(\AAA_0)$ such that $\delta^{(0)}$ is $\Phi$-allowable.
We obtain uncurved objects $$M_t = (M \oplus \Sigma^{-1}M, \delta_{{M}_t}) = \cone(\frac{c_M}{t}, -t) \in \Tw(\AAA)$$ and $$M' = (M \oplus \Sigma^{-1}M, \delta^{(0)}_{{M}_t}) = \cone(c_M^{(1)}) \in \Tw(\AAA_0).$$ 
The result applies to the objects in the subcategory $\Tw_{ut}(\AAA_0)_{\infty} \subseteq \Tw(\AAA_0)$ of uncurved upper triangular objects in the sense of Example \ref{exiln}.
In particular, If $\AAA_0$ is an $A_{\infty}$-category, the result applies to the objects $A \in \AAA_0$ (endowed with the zero connection).
\end{proposition}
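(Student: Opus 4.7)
The plan is to apply Proposition \ref{key} with $R = k[[t]]$, $r = t$, to the $cA_\infty$-category $(\AAA, \mu) = (\AAA_0[[t]], \mu)$. First I would verify that $(M, \delta^{(0)})$ makes sense as an object of $\Tw(\AAA)$: since $\delta^{(0)}$ is locally $\Phi$-allowable in the discrete sense, Lemma \ref{lemlem}(2) shows it is locally $\mu$-allowable for the discrete topology on $\AAA$, hence a fortiori for the $t$-adic topology used to build $\Tw(\AAA)$. The curvature then admits the expansion $c_M = \sum_{n \geq 0} c_M^{(n)} t^n$ displayed just before the statement of the proposition. The hypothesis that $(M, \delta^{(0)})$ is uncurved in $\Tw(\AAA_0)$ translates exactly into $c_M^{(0)} = 0$, so $c_M = t \cdot \tfrac{c_M}{t}$ with $\tfrac{c_M}{t} = \sum_{n \geq 0} c_M^{(n+1)} t^n \in \Hom^2(M, M)$.

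With this factorization in hand, I would invoke Proposition \ref{key}(3). The required regularity is immediate: $\Hom^3(M, M) = \Hom_0^3(M, M)[[t]]$ is a free extension of scalars from $k$, hence flat over $R = k[[t]]$, and $t$ is a non-zero-divisor in $k[[t]]$. Proposition \ref{key}(1) simultaneously furnishes the local $\mu$-allowability of the pair $(\delta_\oplus, \delta_{M_t} - \delta_\oplus)$ that makes the cone construction legitimate in $\Tw(\AAA)$, while parts (2)--(3) yield the uncurved twisted object $M_t = \cone(\tfrac{c_M}{t}, -t\, 1_M) \in \Tw(\AAA)$ with the displayed connection $\delta_{M_t}$.

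To produce $M'$, I would simply reduce modulo $t$. The $cA_\infty$-structure $\mu$ specializes to $\mu^{(0)}$, while the connection $\delta_{M_t}$ specializes to $\delta^{(0)}_{M_t}$: the lower-left entry $-t$ becomes $0$, the upper-right $\tfrac{c_M}{t}$ becomes $c_M^{(1)}$, and the diagonal entries are already $t$-independent. This is precisely the connection defining $\cone(c_M^{(1)}) \in \Tw(\AAA_0)$ in the sense of Example \ref{excone}. Since the computation of curvature is compatible with the quotient $\AAA \to \AAA_0$, and $c_{M_t} = 0$ by the previous step, the curvature of $M'$ in $\Tw(\AAA_0)$ vanishes. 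The two listed special cases then follow immediately: upper triangular connections are locally $\CC(\AAA_0)$-allowable by Example \ref{exiln}, hence locally $\Phi$-allowable, and for an $A_\infty$-category $\AAA_0$ the zero connection on any $A \in \AAA_0$ is trivially locally allowable and satisfies $c_A = (\mu^{(0)}_0)_A = 0$.

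There is not really a serious obstacle here, as the proposition is designed as a direct specialization of the cone-construction of \S\ref{parremcurv}; the main bookkeeping point is carefully tracking the three topologies in play (discrete on $\AAA_0$, discrete on $\AAA$, and $t$-adic on $\AAA$) and using Lemma \ref{lemlem} to transfer allowability between them.
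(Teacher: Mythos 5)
Your proposal is correct and follows essentially the same route as the paper, which derives the proposition directly from Proposition \ref{key} with $r=t$ after using Lemma \ref{lemlem} to transfer allowability and noting that $t$ is regular with respect to $\Hom_0(M,M)[[t]]$. Your additional reduction-mod-$t$ argument for the uncurvedness of $M'$ is a valid filling-in of a step the paper leaves implicit.
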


\subsection{Infinitesimal deformations}\label{parinfdef}
Let $\AAA_0$ be a small $k$-quiver. Put $\AAA =  k[t]/t^{n+1} \hat{\otimes}_k \AAA_0 = \AAA_0[t]/t^{n+1}$. In analogy with the formal case from \S \ref{parformdef}, we obtain a 
brace algebra isomorphism
$$\CC(\AAA_0)[t]/{t^{n+1}} \cong \CC(\AAA_0[t]/t^{n+1})$$
and we write a $cA_{\infty}$-structure $\mu$ on $\AAA_n = \AAA_0[t]/t^{n+1}$ as
$$\mu = \mu^{(0)} + \mu^{(1)}t + \mu^{(2)}t^2 + \dots + \mu^{(n)}t^n$$
for $\mu^{(k)} \in \CC^2(\AAA_0)$.

For $M, N \in \Free(\AAA_n)$, we have
$$\Hom_n(M,N) = \Hom_0(M,N)[t]/t^{n+1}$$
and we write a connection on $M \in \Free(\AAA_n)$ as
$$\delta = \delta^{(0)} + \delta^{(1)}t + \delta^{(2)}t^2 + \dots + \delta^{(n)}t^n$$
for $\delta^{(k)} \in \Hom_0^1(M,M)$.

For $M \in \Free(\AAA_0)$ and connections $\delta, \psi \in \Hom_n^1(M,M)$, we consider $\delta^{(0)}, \psi^{(0)} \in \Hom_0^1(M, M) \subseteq \Hom_n^1(M,M)$. In analogy with Lemma \ref{lemlem}, we have:
\begin{lemma}\label{lemlem2}
If for all $0 \leq k \leq n$, $\delta^{(0)}$ (resp. $(\delta^{(0)}, \psi^{(0)})$) is locally $\mu^{(k)}$-allowable, then $\delta^{(0)}$ (resp. $(\delta^{(0)}, \psi^{(0)})$) is locally $\mu$-allowable.
\end{lemma}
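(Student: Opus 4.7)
The plan is to adapt the proof of Lemma \ref{lemlem}, exploiting the fact that $R = k[t]/t^{n+1}$ is truncated so that all $t$-series become finite and convergence issues disappear. Fix $\alpha \in \N$ and $i \in I$; the goal is to produce $m_0 \in \N$, uniform over the remaining data in Definition \ref{defnilp}, such that expression \eqref{eqnilp} with $\phi = \mu$ and $\delta = \delta^{(0)}$ lies in $T^{\alpha}\AAA$ for every $m \geq m_0$.

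The key step is to expand out the $t$-dependence. Since $\delta^{(0)} \in \Hom_0^1(M,M)$ carries no $t$-component, whereas $\mu = \sum_{k = 0}^n \mu^{(k)} t^k$ and each $b_s, c_s \in \AAA = \AAA_0[t]/t^{n+1}$ decomposes as $b_s = \sum_{j_s} b_s^{(j_s)} t^{j_s}$ and $c_s = \sum_{l_s} c_s^{(l_s)} t^{l_s}$, multilinearity rewrites expression \eqref{eqnilp} as a finite $R$-linear combination
\[
\sum_{\substack{k,\, j_\bullet,\, l_\bullet \\ k + \sum j_s + \sum l_s \leq n}} t^{\, k + \sum j_s + \sum l_s} \cdot Y_{k, j_\bullet, l_\bullet},
\]
where $Y_{k, j_\bullet, l_\bullet}$ is precisely the expression \eqref{eqnilp} in the quiver $\AAA_0$, with cochain $\mu^{(k)}$ and $\AAA_0$-inputs $b_s^{(j_s)},\, c_s^{(l_s)}$.

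Now for each $k \in \{0, \dots, n\}$, local $\mu^{(k)}$-allowability of $\delta^{(0)}$ in the discrete $\AAA_0$-setting supplies an integer $m_0^{(k)}$ depending only on $i$ (applied at any level $\geq 1$, since $T^{1}\AAA_0 = 0$ in the discrete topology), such that every $Y_{k, j_\bullet, l_\bullet}$ vanishes as soon as $m \geq m_0^{(k)}$. The uniformity over $j, B_\bullet, C_\bullet$ and $\AAA_0$-inputs built into Definition \ref{defnilp} is exactly what one needs to handle all the finitely many choices of $b_s^{(j_s)},\, c_s^{(l_s)}$ simultaneously. Setting $m_0 := \max_{0 \leq k \leq n} m_0^{(k)}$, which is a finite maximum, every $Y_{k, j_\bullet, l_\bullet}$ in the above expansion vanishes for $m \geq m_0$, hence the entire expression \eqref{eqnilp} vanishes, and a fortiori lies in $T^\alpha \AAA$ for every $\alpha$. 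The case of $(\delta^{(0)}, \psi^{(0)})$ is identical with \eqref{eqnilp2} replacing \eqref{eqnilp}.

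The statement presents no genuine difficulty: truncation reduces $T$-adic convergence for $\mu$ to the vanishing of finitely many discrete expressions where the hypothesis applies directly, so the more delicate uniform estimate over infinitely many $\mu^{(k)}$ that is carried out in Lemma \ref{lemlem}(1) is not needed here.
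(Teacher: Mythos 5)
Your proof is correct and matches what the paper intends: the paper omits the proof of this lemma entirely, asserting only that it holds ``in analogy with Lemma \ref{lemlem}'', and your argument is precisely the right instantiation of that analogy. You also correctly identify the one genuine simplification --- that the finite range $0 \leq k \leq n$ lets you take a plain maximum of the $m_0^{(k)}$, so the multiplicative estimate $m_0 = n_0 \cdot \max_k m_{0,k}$ used in the formal case is not needed.
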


Consider $(M, \delta^{(0)}) \in \Tw(\AAA_0)$ and suppose $\delta^{(0)}$ is locally $\mu^{(l)}$-allowable for all $0 \leq l \leq n$. We consider $\delta^{(0)} \in \Hom_n^1(M,M)$ as a connection on $M \in \Free(\AAA_n)$ and by Lemma \ref{lemlem}, we obtain $(M, \delta^{(0)}) \in \Tw(\AAA_n)$. We calculate the curvature
$$
c_M = \embr_{\delta^{(0)}}(\mu) = \sum_{k = 0}^{\infty}(\sum_{l = 0}^n\mu_k^{(l)}({\delta^{(0)}}^{\otimes k}) t^l) = \sum_{l = 0}^nc_M^{(l)} t^l
$$
for $$c_M^{(l)} = \sum_{k = 0}^{\infty} \mu_k^{(l)}({\delta^{(0)}}^{\otimes k}).$$
Suppose $c_M^{(0)} = 0$, i.e. $(M, \delta^{(0)})$ is an uncurved object in $\Tw(\AAA_0)$. Then we have 
$$c_M = t\frac{c_M}{t} \hspace{0,5cm} \text{for} \hspace{0,5cm} \frac{c_M}{t} = \sum_{l = 0}^{n-1} c_M^{(l+1)} t^l.$$
Put $M_t = M \oplus \Sigma^{-1}M \in \Free(\AAA_n)$ endowed with the connection
$$\delta_{{M}_t} = \begin{pmatrix} \delta^{(0)} & \frac{c_M}{t} \\ -t & - \delta^{(0)} \end{pmatrix} \in \Hom_n^1({M}_t, {M}_t)$$
and consider the associated connection
$$\delta^{(0)}_{{M_t}} = \begin{pmatrix} \delta^{(0)} & c_M^{(1)} \\ 0 & - \delta^{(0)}  \end{pmatrix} \in \Hom^1_0({M}_t, {M}_t).$$
By definition, $\delta_{{M}_t}$ is an $\AAA$-defomation of $\delta^{(0)}_{{M_t}}$.
By Proposition \ref{key} we obtain
$$(M_t, \delta_{{M}_t}) = \cone(\frac{c_M}{t}, -t) \in \Tw(\AAA_n)$$ deforming the uncurved object
$$(M_t, \delta^{(0)}_{{M}_t}) = \cone(c_M^{(1)}) \in \Tw(\AAA_0),$$ 
but since $t$ is not regular in $k[t]/t^{n+1}$, we cannot conclude that $(M_t, \delta_{{M}_t})$ is uncurved.

\begin{example}\label{example}
The following example from \cite[Example 3.19]{kellerlowen} shows that the infinitesimal analogue of Proposition \ref{propformdef} is false. Let $(A_0 = k[u,v] = k\langle u, v \rangle/(uv -vu, v^2), \mu^{(0)}_2)$ be the free super commutative graded algebra on generators $u$ in degree $2$ and $v$ in degree $3$. Let $A = (A_0[\epsilon], \mu)$ be the first order deformation with non-zero contributions $\mu^{(1)}_0 = u$, $\mu^{(1)}_1(u) = v$, $\mu^{(1)}_1(v) = 0$. Then $c_A = u\epsilon$ and we can put $\frac{c_A}{\epsilon} = u$. Now $\mu_1(c_A) = \mu_1(u\epsilon) = v\epsilon^2 = 0$ but $\mu_1(\frac{c_A}{\epsilon}) = \mu_1(u) = v\epsilon \neq 0$.
\end{example}

Now suppose $\mu$ extends to an $n+1$-th order deformation of $\mu^{(0)}$ on $\AAA_{n+1} = \AAA_0[t]/t^{n+2}$:
$$\bar{\mu} = \mu^{(0)} + \mu^{(1)}t + \mu^{(2)}t^2 + \dots + \mu^{(n+1)}t^{n+1}.$$
Consider $\bar{M} = (M, \delta^{(0)}) \in \Tw(\AAA_{n+1})$. We have $c_{\bar{M}} = \sum_{l = 0}^{n+1} c_M^{(l)} t^l$
for $$c_M^{(l)} = \sum_{k = 0}^{\infty} \mu_k^{(l)}({\delta^{(0)}}^{\otimes k}).$$
We have 
$$c_{\bar{M}} = t\frac{c_{\bar{M}}}{t} \hspace{0,5cm} \text{for} \hspace{0,5cm} \frac{c_{\bar{M}}}{t} = \sum_{l = 0}^{n} c_M^{(l+1)} t^l.$$
Clearly, we can naturally interpret 
$$\frac{c_{\bar{M}}}{t} =  \sum_{l = 0}^{n} c_M^{(l+1)} t^l \in \Hom^2_n(M,M).$$

\begin{proposition}\label{propinfdef}
Suppose $\mu$ extends to an $n+1$-th order deformation $\bar{\mu}$ of $\mu^{(0)}$.
Consider an uncurved object $(M, \delta^{(0)}) \in \Tw(\AAA_0)$ such that $\delta^{(0)}$ is $\mu^{(l)}$-allowable for $0 \leq l \leq n+1$.
We obtain uncurved objects $$M_{(n)} = (M \oplus \Sigma^{-1}M, \delta_{{M}_{(n)}}) = \cone(\frac{c_{\bar{M}}}{t}, -t) \in \Tw(\AAA_n)$$ and $$M' = (M \oplus \Sigma^{-1}M, \delta_{M'} = \delta^{(0)}_{{M}_{(n)}}) = \cone(c_M^{(1)}) \in \Tw(\AAA_0).$$ 
The result applies to the objects in the subcategory $\Tw_{ut}(\AAA_0)_{\infty} \subseteq \Tw(\AAA_0)$ of uncurved upper triangular objects in the sense of Example \ref{exiln}.
In particular, If $\AAA_0$ is an $A_{\infty}$-category, the result applies to the objects $A \in \AAA_0$ (endowed with the zero connection).
\end{proposition}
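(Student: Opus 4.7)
The plan is to mirror the formal argument of Proposition \ref{propformdef}, with the crucial twist that the failure of $t$ to be regular in $\AAA_n = \AAA_0[t]/t^{n+1}$ is compensated by working one step higher, in $\AAA_{n+1}$, where the extension $\bar\mu$ is available. The existence of $M_{(n)} = \cone(\frac{c_{\bar{M}}}{t}, -t) \in \Tw(\AAA_n)$ follows from Proposition \ref{key}(1) combined with Lemma \ref{lemlem2}: the hypothesis that $\delta^{(0)}$ is locally $\mu^{(l)}$-allowable for $0 \leq l \leq n+1$ (the $l = n+1$ part is necessary because $\frac{c_{\bar{M}}}{t}$ contains a $c_M^{(n+1)} t^n$ summand) gives local $\bar\mu$-allowability of $\delta^{(0)}$, and then Proposition \ref{propfg} applied to $g = -t \cdot 1_M$ (using (Un) to bound the number of $g$-insertions) provides the local $\mu$-allowability of the two-sided cone datum.

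The heart of the argument is to show $c_{M_{(n)}} = 0$ in $\Hom_n^2(M_{(n)}, M_{(n)})$. I would apply Proposition \ref{key}(2), which reduces the task to proving $\eta_1(\frac{c_{\bar{M}}}{t}) = 0$ in $\Hom_n^3(M,M)$, where $\eta = \embr_{\delta^{(0)}}(\mu)$ is the induced structure on $\{M\} \subseteq \Tw(\AAA_n)$. Working one order higher with $\bar\eta = \embr_{\delta^{(0)}}(\bar\mu)$ on $\AAA_{n+1}$, the $cA_\infty$ identity $\bar\eta \bullet \bar\eta = 0$ evaluated at $M$ gives $\bar\eta_1(c_{\bar{M}}) = 0$, that is, $t \cdot \bar\eta_1(\frac{c_{\bar{M}}}{t}) = 0$ in $\Hom_{n+1}^3(M,M) = \Hom_0^3(M,M)[t]/t^{n+2}$. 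On this free $k[t]/t^{n+2}$-module, multiplication by $t$ has kernel exactly $t^{n+1}\Hom_0^3(M,M)$, so $\bar\eta_1(\frac{c_{\bar{M}}}{t})$ lies in $t^{n+1}\Hom_{n+1}^3(M,M)$. Reducing modulo $t^{n+1}$ yields $\eta_1(\frac{c_{\bar{M}}}{t}) = 0$ in $\Hom_n^3(M,M)$, as required. This is the main obstacle: it is precisely the step that consumes the extra $(n{+}1)$-st order of deformation data and explains why $n$-th order uncurved objects require $(n{+}1)$-th order curved input.

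For the uncurvedness of $M' = \cone(c_M^{(1)}) \in \Tw(\AAA_0)$, Example \ref{excone} presents $c_{M'}$ as a $2 \times 2$ matrix whose diagonal entries are the curvatures of $M$ and $\Sigma^{-1}M$ in $\Tw(\AAA_0)$ (both vanishing because $c_M^{(0)} = 0$ by assumption), and whose upper-right entry is $\eta^{(0)}_1(c_M^{(1)})$, where $\eta^{(0)} = \embr_{\delta^{(0)}}(\mu^{(0)})$. Extracting the coefficient of $t^1$ in the identity $\eta_1(c_M) = 0$ (the $0$-ary part of $\eta \bullet \eta = 0$ over $\AAA_n$) gives $\eta^{(0)}_1(c_M^{(1)}) + \eta^{(1)}_1(c_M^{(0)}) = 0$, and $c_M^{(0)} = 0$ forces $\eta^{(0)}_1(c_M^{(1)}) = 0$.

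The final two assertions are immediate: upper triangular connections in the sense of Example \ref{exiln} are locally $\Phi$-allowable for any subset $\Phi \subseteq \CC(\AAA)$, and for $A \in \AAA_0$ with the zero connection both allowability and uncurvedness ($c_A^{(0)} = \mu^{(0)}_0 = 0$ since $\AAA_0$ is $A_\infty$) are automatic.
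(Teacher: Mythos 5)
Your proposal is correct and follows essentially the same route as the paper: lift to $\AAA_{n+1}$ where $\bar\mu$ lives, use Proposition \ref{key}(2) to reduce uncurvedness of the two-sided cone to the vanishing of $\eta_1(\frac{c_{\bar M}}{t})$, and extract that vanishing from the identity $\bar\eta_1(c_{\bar M}) = t\,\bar\eta_1(\frac{c_{\bar M}}{t}) = 0$ — the paper phrases this last step as a direct comparison of $t$-adic coefficients, while you phrase it via the kernel of multiplication by $t$ on the free $k[t]/t^{n+2}$-module $\Hom_0^3(M,M)[t]/t^{n+2}$, which is the same computation. Your additional verifications (allowability via Lemma \ref{lemlem2} and Proposition \ref{propfg}, and the uncurvedness of $M'$ from the $t^1$-coefficient of $\eta_1(c_M)=0$) are correct and consistent with what the paper leaves implicit.
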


\begin{proof}
We are to show that the curvature of $\delta_{{M}_{(n)}}$ vanishes. Put $\bar{\eta} = \embr_{\delta}(\bar{\mu}) = \sum_{k = 0}^{n+1} \eta^{(k)} t^k$ and $\eta = \embr_{\delta}(\mu) = \sum_{k = 0}^{n} \eta^{(k)} t^k$. 
We consider the object $$M_t = \cone(\frac{c_{\bar{M}}}{t}, -t) \in \Tw(\AAA_{n+1}).$$ Clearly, the object $M_t$ reduces to $M_{(n)} \in \Tw(\AAA_n)$, so it suffices to show that the curvature of $M_t$ is zero modulo $t^{n+1}$. 
On the one hand, according to Proposition \ref{key}, the curvature $c_{M_t}$ has the only possibly non-zero component given by:
$$\begin{aligned}
\bar{\eta}_1(\frac{c_{{\bar{M}}}}{t}) & = (\sum_{k = 0}^{n+1} \eta_1^{(k)} t^k)(\sum_{l = 0}^n c_M^{(l+1)} t^{l})\\
& = \sum_{p = 0}^{n+1} (\sum_{k + l = p} \eta_1^{(k)}(c_M^{(l+1)})) t^p.
\end{aligned}$$
On the other hand we know that
$$\begin{aligned}
0 = ((\bar{\eta} \bullet \bar{\eta})_0)_{\bar{M}} = \bar{\eta}_1(c_{{\bar{M}}}) & = (\sum_{k = 0}^{n+1} \eta_1^{(k)} t^k)(\sum_{l = 0}^{n} c_M^{(l+1)} t^{l+1}) \\
& = \sum_{q = 0}^{n+1} (\sum_{k + l +1 = q} \eta^{(k)}_1(c_M^{(l+1)})) t^q.
\end{aligned}$$
From comparing the coefficients in these two expressions we see that 
$$\bar{\eta}_1(\frac{c_{{\bar{M}}}}{t}) = \sum_{k + l = n+1} \eta_1^{(k)}(c_M^{(l+1)})) t^{n+1}.$$ 
which finishes the proof.
\end{proof}

\subsection{$A_{\infty}$-deformations}\label{parinfty}
Put $R = k[[t]]$. Let $(\AAA_0, \mu^{(0)})$ be a small $k$-linear $A_{\infty}$-category and let $(\AAA = \AAA_0[[t]], \mu)$ be an $R$-linear $A_{\infty}$-category deforming $\AAA_0$. 
Consider an object $(M, \delta^{(0)}) \in \Tw_{ut}(\AAA_0)_{\infty}$.
Let $$(M_t, \delta_{{M}_t}) = \cone(\frac{c_M}{t}, -t) \in \Tw(\AAA)$$ 
be the uncurved object obtained form Proposition \ref{propformdef}.
An uncurved twisted object $N$ over an $A_{\infty}$-category $\BBB$ naturally determines an $A_{\infty}$-module $\tilde{N}$ over $\BBB$ ($\BBB$-module for short) with $\tilde{N}(B) = \Hom(B, N)$ for $B \in \BBB$. See for instance \cite{lefevre}.
We thus consider the $\AAA_0$-module $\tilde{M}$ and the $\AAA$-module $\tilde{M_t}$. Further, the reduction map $\AAA \lra \AAA_0$ allows us to view $\tilde{M}$ as an $\AAA$-module $\tilde{M}_{\AAA}$. The natural maps
$$\Hom(A, M_t) \cong \Hom(A, M) \oplus \Hom(A, \Sigma^{-1}M) \lra \Hom_0(A, \Sigma^{-1}M)$$
given by projection onto the second factor followed by reduction determine a morphism of $\AAA$-modules
$$\theta_M: \tilde{M}_t \lra \Sigma^{-1}\tilde{M}_{\AAA}.$$

\begin{proposition}\label{propder}
\begin{enumerate}
\item For every $M \in \Tw_{ut}(\AAA_0)_{\infty}$, the morphism $\theta_M$ is a quasi-isomorphism of $\AAA$-modules. 
\item If $\tilde{M} \in D(\AAA_0)$ is perfect, the same holds for $\tilde{M}_t \cong \Sigma^{-1}\tilde{M}_{\AAA} \in D(\AAA)$.
\item The object $\tilde{M}_t \cong \Sigma^{-1}\tilde{M}_{\AAA} \in D(\AAA)$ is a derived lift of $\tilde{\cone}(c^{(1)}_M) \in D(\AAA_0)$, i.e. $k \otimes^L_R \Sigma^{-1}\tilde{M}_{\AAA} = k \otimes_R \tilde{M}_t = \tilde{\cone}(c^{(1)}_M)$.
\end{enumerate}
\end{proposition}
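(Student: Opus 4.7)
The plan is to treat the three statements in order, with part (1) carrying almost all the technical weight.

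For part (1), my strategy is to exhibit the kernel of $\theta_M$ as an acyclic subcomplex, so that $\theta_M$ becomes the strict quotient $\tilde{M}_t \twoheadrightarrow \tilde{M}_t/K \cong \Sigma^{-1}\tilde{M}_\AAA$. On the graded level, this kernel is
\[
K(A) = \Hom_0(A, M)[[t]] \oplus t\, \Hom_0(A, \Sigma^{-1}M)[[t]].
\]
The first step is to check that $K$ is closed under the full $\AAA$-module structure on $\tilde{M}_t$ and that the induced quotient structure is that of $\Sigma^{-1}\tilde{M}_\AAA$. The key observation is that the off-diagonal entry $-t\cdot 1_M$ of $\delta_{M_t}$ can appear inside an embraced term $\mu_k(\dots, -t\cdot 1_M, \dots)$ only as an argument of $\mu_2$, because the unit axioms (U1) and (Un) kill every contribution with $k \neq 2$; the surviving contribution $\mu_2(-t\cdot 1_M, -) = -t\cdot(-)$ accounts exactly for the extra factor of $t$ forced into the second summand of $K$. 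Modulo $K$ only the diagonal entry $-\delta^{(0)}$ of $\delta_{M_t}$ and the reduction $\mu \bmod t = \mu^{(0)}$ survive, so the quotient $\AAA$-module structure matches $\Sigma^{-1}\tilde{M}_\AAA$.

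The heart of part (1) is then the acyclicity of $K$, which I propose to establish via the decreasing $t$-adic filtration $F^p K = t^p K$. Each associated graded piece $F^p K/F^{p+1}K$ has underlying graded module a copy of $\tilde{M} \oplus \Sigma^{-1}\tilde{M}$ and a differential of the form
\[
\begin{pmatrix} \eta^{(0)}_1 & 0 \\ -1_M & \eta^{(0)}_1 \end{pmatrix},
\]
where the off-diagonal entry, coming from $(-t\cdot 1_M)/t$ under the identification of successive $t^n$-quotients with $k$, is a shift of the identity of $\tilde{M}$ and hence an isomorphism of chain complexes. Each graded piece is thus a cone on an isomorphism, so acyclic. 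Because the filtration is complete and separated on each $K(A)$ (which is $t$-adically complete by the definition of $\Tw(\AAA)$), the associated spectral sequence converges from $E_1 = 0$, yielding $H^\ast(K) = 0$. The short exact sequence $0 \to K \to \tilde{M}_t \xrightarrow{\theta_M} \Sigma^{-1}\tilde{M}_\AAA \to 0$ then shows $\theta_M$ is a quasi-isomorphism.

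For part (2), the reduction $\tilde{\cone}(c^{(1)}_M)$ is perfect in $D(\AAA_0)$ as a cone of a morphism between perfect objects. By part (3) below, $k \otimes^L_R \tilde{M}_t = \tilde{\cone}(c^{(1)}_M)$, and since each $\tilde{M}_t(A) = \Hom_0(A, M \oplus \Sigma^{-1}M)[[t]]$ is $R$-free, $\tilde{M}_t$ is $R$-flat; derived Nakayama for perfect complexes in formal deformations then lifts perfectness of the reduction to perfectness of $\tilde{M}_t$ in $D(\AAA)$, which transfers to $\Sigma^{-1}\tilde{M}_\AAA$ via part (1). For part (3), the identification $k \otimes_R \tilde{M}_t = \tilde{\cone}(c^{(1)}_M)$ is immediate from reducing $\delta_{M_t}$ modulo $t$, which produces exactly the connection on $\cone(c^{(1)}_M) \in \Tw(\AAA_0)$; the equality $k \otimes^L_R \tilde{M}_t = k \otimes_R \tilde{M}_t$ follows from the $R$-flatness just noted; combining with part (1) then gives the full chain of equivalences in the statement. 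The main obstacle is the bookkeeping in part (1): verifying stability of $K$ under all the higher operations $\mu^{\tilde{M}_t}_n$ (not only the differential $\mu^{\tilde{M}_t}_1$) requires a careful case analysis of which embraced terms $\mu_k(\dots, \delta_{M_t}^{\otimes \ast}, \dots)$ can involve the $(2,1)$-entry of $\delta_{M_t}$, and the convergence of the filtration spectral sequence relies critically on the $t$-adic completeness built into the definition of $\Tw(\AAA)$.
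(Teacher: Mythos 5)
Your proof of part (1) is correct but takes a genuinely different route from the paper's. The paper first verifies the claim for the representable objects $A \in \AAA$ with zero connection, where $A_t \cong \Sigma^{-1}\cone(t\colon \AAA(-,A) \to \AAA(-,A))$ and the quasi-isomorphism comes from the short exact sequence $0 \to R \to R \to k \to 0$; it then extends to all of $\Tw_{ut}(\AAA_0)_{\infty}$ by a d\'evissage, showing via the diagram of Lemma \ref{lemconee} that the class of $M$ for which $\theta_M$ is a quasi-isomorphism is closed under direct sums and cones. You instead treat a general $M$ in one step: you identify $\Kern(\theta_M)$ explicitly and kill it with the $t$-adic filtration, the associated graded pieces being cones on isomorphisms, with completeness of the filtration giving $H^{\ast}(K)=0$ (most cleanly via the Milnor $\lim^1$-sequence applied to the acyclic quotients $K/t^pK$, whose acyclicity follows by induction from the graded pieces). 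Your observation that strict unitality confines the $(2,1)$-entry $-t\cdot 1_M$ to a single $\mu_2$-contribution is exactly the mechanism driving Proposition \ref{key}, so the verification that $K$ is a dg submodule and that the quotient is $\Sigma^{-1}\tilde{M}_{\AAA}$ is sound. Your approach avoids the reduction to representables and Lemma \ref{lemconee}; the paper's concentrates all the work in the rank-one computation for $A_t$ and then rides the triangulated structure.

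The one place where you lean on an unproved input is part (2): you deduce perfectness of $\tilde{M}_t$ from perfectness of its derived reduction by invoking ``derived Nakayama for perfect complexes.'' That statement does hold here (the values of $\tilde{M}_t$ are $R$-free and $t$-adically complete), but it is itself a nontrivial lifting result requiring an argument, and without the completeness hypothesis it is false (an object with vanishing derived reduction need not vanish). The paper sidesteps this entirely: the exact sequences \eqref{eqexact} exhibit each restricted representable $\AAA_0(-,A)_{\AAA}$ as a two-term complex of free $\AAA$-modules, hence perfect, and since restriction is exact and preserves summands, $\tilde{M}_{\AAA}\cong \Sigma\tilde{M}_t$ is perfect for every perfect $\tilde{M}$. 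I would replace your Nakayama step by this direct argument. Your part (3) coincides with the paper's: graded $R$-freeness of the values gives cofibrancy over the regular ring $R$, so the derived and underived reductions agree, and reducing $\delta_{M_t}$ modulo $t$ yields the connection of $\cone(c_M^{(1)})$.
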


\begin{proof}
(1) First, we look at the claim for $A \in \AAA$, endowed with the zero connection.
We thus have $A_t = A \oplus \Sigma^{-1}A$ endowed with
$$\delta_{{A}_t} = \begin{pmatrix} 0 & 0 \\ -t & 0 \end{pmatrix}.$$
On the other hand, for $-t \in \Hom^1(A, \Sigma^{-1}{A})$, we have $\cone(-t) = \Sigma^{-1}{A} \oplus {A}$ endowed with
$$\delta_{\cone(-t)} = \begin{pmatrix} 0 & -t \\ 0 & 0 \end{pmatrix}.$$
Hence, there is a canonical isomorphism $A_t \cong \cone(-t)$ given by permuting the two factors of the direct sum.
It then easily follows that for the induced $\AAA$-modules, $\tilde{A}_t = \Hom(-, {A}_t)$ is isomorphic to the cone of 
$$-t: \Sigma^{-1}{\AAA(-,A)} \lra \Sigma^{-1}{\AAA(-,A)}.$$
On the other hand, the exact sequence
$$\xymatrix{ 0 \ar[r] & R \ar[r]_{-t} & R \ar[r] & k \ar[r] & 0}$$
gives rise to an exact sequence of $\AAA$-modules
\begin{equation} \label{eqexact}
\xymatrix{0 \ar[r] & \Sigma^{-1}{\AAA(-,A)} \ar[r]_-{-t} & \Sigma^{-1}{\AAA(-,A)} \ar[r] & \Sigma^{-1}{\AAA_0(-,A)} \ar[r] & 0.}
\end{equation}
Consequently, we obtain a quasi-isomorphism of $\AAA$-modules
$$\cone(-t) = \Sigma^{-1}\AAA(-,A) \oplus \AAA(-,A) \lra \Sigma^{-1}\AAA_0(-,A)$$
given by projection onto the first factor followed by reduction.
It follows that the isomorphic morphism $\theta_A$ is a quasi-isomorphism too.

To finish the proof of (1), it now suffices to show that the objects $M$ in $\Tw_{ut}(\AAA_0)$ for which $\theta_M$ is a quasi-isomorphism are closed under taking direct sums and cones. 
First, consider such objects $M, N \in \Tw_{ut}(\AAA_0)$ and a closed morphism $f \in \Hom_0^0(M,N)$, and consider $f$ as a (not necessarily closed) morphism $\ovl{f} \in \Hom^0({M}, {N})$.
We can extend the diagram from Lemma \ref{lemconee} applied to $\ovl{f}$ to a diagram of $\BBB$-modules
$$\xymatrix{{\tilde{M}_t = {M} \tilde{\oplus} \Sigma^{-1} {M}} \ar[d]_{\ovl{f}_t} \ar[r] & {\Sigma^{-1} \tilde{M}} \ar[d]^{-\ovl{f}} \ar[r] & {\Sigma^{-1}\tilde{M}_{\AAA}} \ar[d]^{-f} \\
{\tilde{N}_t = {N} \tilde{\oplus} \Sigma^{-1}{N}} \ar[r] \ar[d] & {\Sigma^{-1} \tilde{N}} \ar[d] \ar[r] & {\Sigma^{-1}\tilde{N}_{\AAA}} \ar[d]\\ 
{\cone(\ovl{f}_t) \cong_{\sigma} \cone(\ovl{f})_t} \ar[r] & {\Sigma^{-1} \cone(\ovl{f})} \ar[r] & {\Sigma^{-1} \cone(f)_{\AAA}}, }$$
where in the middle column we encounter $cA_{\infty}$-modules rather than $A_{\infty}$-modules. 
Forgetting about the middle column, the horizontal maps from top to bottom are precisely $\theta_M$, $\theta_N$ and $\theta_{\cone(f)}$.
It follows that if $\theta_M$ and $\theta_N$ are quasi-isomorphisms, the same holds for $\theta_{\cone(f)}$.
The argument for direct sums is similar.

(2) From the exact sequences \eqref{eqexact}, we deduce that the modules $\AAA_0(-,A)_{\AAA}$ are perfect $\AAA$-modules. It readily follows that every perfect $\AAA_0$-module remains perfect when considered as an $\AAA$-module. 

(3) It suffices to show that for the twisted object ${M}_t$, the corresponding $\AAA$-module $\tilde{M}_t$ is cofibrant over $R$. By construction, the values $\Hom(A, {M}_t)$ are graded free over $R$. Since $R$ is regular, this ensures cofibrancy of the complexes.
\end{proof}

\begin{remark}\label{remint}
In the setup of infinitesimal deformations considered in \S \ref{parinfdef}, analogous results to Proposition \ref{propder} do not hold. In particular, for $\AAA = \AAA_0[t]/t^{n+1}$, the construction of $M_t \in \Tw(\AAA)$ does not guarantee that the $\AAA$-module $\tilde{M}_t$ is $k[t]/t^{n+1}$-cofibrant, so it is not necessarily a derived lift. The conclusion that $\tilde{M}_t$ is a derived lift of  $\tilde{\cone}(c^{(1)}_M)$ can however be taken if all relevant $k[t]/t^{n+1}$-modules are (cohomologically) bounded above.
\end{remark}

\subsection{Linear deformations}\label{parlin}
Let $\AAA_0$ be a $k$-linear category, i.e. $\AAA_0(A,A') = \AAA_0^0(A,A')$ and  $\mu^{(0)} = \mu_2^{(0)}$. In this case every $R$-linear $cA_{\infty}$-deformation is necessarily an $R$-linear category structure $\mu$ on $\AAA = R \hat{\otimes}_k \AAA_0$, i.e $\mu = \mu_2$, which is a forteriori uncurved.
A twisted object $(M = \oplus_i \Sigma^{n_i} A_i, \delta_M) \in \Tw(\AAA)$ can be written as $M = \oplus_{n \in \Z} (\oplus_{i \in I_{n}} \Sigma^{n}A_i)$ with $\delta_M$ determined by the non-zero entries
$$\delta_M^n \in\Hom^1(\oplus_{i \in I_n} \Sigma^nA_i, \oplus_{i \in I_{n+1}} \Sigma^{n+1}A_i) \cong \Hom^0(\oplus_{i \in I_n} A_i, \oplus_{i \in I_{n+1}} A_i).$$ 
Note that by Example \ref{exmun}, every connection actually determines a twisted object.
The $cA_{\infty}$-structure $\eta = \mathrm{embr}_{\delta}(\mu)$ on $\Tw(\AAA)$ reduces to
$$c_M = (\eta_0)_M = \mu(\delta_M, \delta_M);$$
$$\eta_1 = \mu(\delta_N \otimes 1) + \mu(1 \otimes \delta_M);$$
$$\eta_2 = \mu_2.$$
Put $R = k[[t]]$. For $(M, \delta^{(0)}) \in \Tw(\AAA_0)$, the object
$$({M}_t, \delta_{M_t}) = \cone(\frac{c_M}{t}, -t)$$ from Proposition \ref{propformdef} has
\begin{equation}\label{eqdesdelta}
\delta_{M_t} = \begin{pmatrix}   \delta^{(0)} & \frac{\mu(\delta^{(0)}, \delta^{(0)})}{t} \\ -t & - \delta^{(0)} \end{pmatrix}.
\end{equation}
As in \S \ref{parinfty}, we can interpret the twisted objects as modules. Concretely, an object $N \in \Tw(\AAA)$ has an associated precomplex $\tilde{N}$ of free $\AAA$-modules, and if $N$ is uncurved, then $\tilde{N}$ is an actual complex. Thus, in our situation we obtain the complex $\tilde{M}_t$ of free $\AAA$-modules. The fact that $\tilde{M}_t$ is a complex amounts to the computation
$$\eta_2(\delta_t, \delta_t) = \mu(\delta_t, \delta_t) = 0$$
which is a special case of the proof of Proposition \ref{propformdef}.

 Put $R = k[t]/t^{n+1}$. Under the hypotheses of Proposition \ref{propinfdef}, we once again obtain the uncurved object $({M}_t, \delta_{M_t}) \in \Tw(\AAA)$ with the same description of $\delta_{M_t}$ as above in \eqref{eqdesdelta}. Since we no longer have the exact sequences \eqref{eqexact}, the derived interpretation of $M_t$ as given in Proposition \ref{propder}(1) no longer holds. We end this section with some observations which still hold for infinitesimal deformations in the specific setup of this section, see Remark \ref{remint}.

\begin{proposition}
\begin{enumerate}
\item If $\tilde{M}$ is bounded (resp. bounded above, resp. bounded below), the same holds for $\tilde{M}_t$.
\item If $\tilde{M}$ is a bounded complex of finite free $\AAA_0$-modules, then $\tilde{M}_t$ is a bounded complex of finite free $\AAA$-modules.
\item If $\tilde{M}$ is bounded above, then the object $\tilde{M}_t \in D(\AAA)$ is a derived lift of $\tilde{\cone}(c^{(1)}_M) \in D(\AAA_0)$, i.e. $k \otimes^L_R \tilde{M}_t = k \otimes_R \tilde{M}_t = \tilde{\cone}(c^{(1)}_M)$.
\end{enumerate}
\end{proposition}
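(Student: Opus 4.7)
The plan is to exploit the concrete description $M_t = M \oplus \Sigma^{-1}M$ together with the explicit connection formula \eqref{eqdesdelta}, and then pass to the associated complex of $\AAA$-modules.

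Parts (1) and (2) will be essentially formal. The underlying graded free object of $M_t$ over $\AAA$ is $M \oplus \Sigma^{-1}M$, so for each $A \in \AAA_0$ one has a degree-wise decomposition
$$\tilde{M}_t^p(A) \;\cong\; \tilde{M}_{\AAA}^p(A) \;\oplus\; \tilde{M}_{\AAA}^{p+1}(A).$$
Boundedness above, below, or on both sides for $\tilde{M}$ therefore transfers immediately to $\tilde{M}_t$, settling (1). For (2), each summand on the right is finite free over $\AAA$ as soon as the corresponding $\tilde{M}^p$ and $\tilde{M}^{p+1}$ are finite free over $\AAA_0$; the differential mixes them via a $2 \times 2$ matrix whose entries are maps of free $\AAA$-modules.

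For part (3), I would split the claim $k \otimes^L_R \tilde{M}_t = k \otimes_R \tilde{M}_t = \tilde{\cone}(c_M^{(1)})$ into two steps. First, I would check K-flatness of $\tilde{M}_t$ over $R$: each value $\tilde{M}_t^p(A)$ is a direct sum of copies of $\AAA(A,-) = R \otimes_k \AAA_0(A,-)$, hence an $R$-free module. Under the bounded above hypothesis on $\tilde{M}$, the complex $\tilde{M}_t$ is a bounded above complex of $R$-flat $\AAA$-modules, and such complexes are K-flat over $R$. This yields $k \otimes^L_R \tilde{M}_t \cong k \otimes_R \tilde{M}_t$ in $D(\AAA)$. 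Second, I would identify the underived reduction. Reducing the connection matrix \eqref{eqdesdelta} modulo $t$ sends the entry $-t$ to zero and collapses $\frac{c_M}{t} = \sum_{l=0}^{n-1} c_M^{(l+1)} t^l$ to its constant term $c_M^{(1)}$. The resulting connection on $M \oplus \Sigma^{-1}M$ is precisely the connection $\delta_{M'}$ from Proposition \ref{propinfdef} defining $M' = \cone(c_M^{(1)}) \in \Tw(\AAA_0)$. Since $N \mapsto \tilde{N}$ commutes with reduction modulo $t$ at the level of underlying graded modules and of differentials, this gives $k \otimes_R \tilde{M}_t \cong \tilde{\cone}(c_M^{(1)})$ as complexes of $\AAA_0$-modules.

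The main obstacle is the K-flatness step in (3). Because $R = k[t]/t^{n+1}$ is not regular --- its residue field $k$ has infinite projective dimension over $R$ for $n \geq 1$ --- one cannot drop the bounded-above hypothesis and still expect $\tilde{M}_t$ to compute the correct derived reduction. This is precisely the caveat articulated in Remark \ref{remint}, and is the reason why the formal setup of \S \ref{parformdef} (where $R = k[[t]]$ is regular, so every graded free $R$-module is flat in a well-behaved way) is considerably better behaved than the infinitesimal one treated here.
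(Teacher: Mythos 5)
Your proof is correct. The paper states this proposition without giving a proof, but your argument is exactly the one the authors indicate via Remark \ref{remint} and the proof of Proposition \ref{propder}(3): parts (1) and (2) are immediate from the degreewise decomposition of $M_t = M \oplus \Sigma^{-1}M$, and for (3) the bounded-above hypothesis is precisely what supplies K-flatness of the termwise $R$-free complex $\tilde{M}_t$ over the non-regular ring $R = k[t]/t^{n+1}$, so that the underived reduction --- which kills the $-t$ entry and truncates $\frac{c_M}{t}$ to its constant term $c_M^{(1)}$ --- computes $k \otimes^L_R \tilde{M}_t$.
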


\section{Torsion derived categories}\label{parpartor}

Put $R = k[[t]]$. In this section we introduce the torsion derived category of an $R$-cofibrant dg category $\AAA$ in \S \ref{partor}. For us, the torsion derived category is mainly a technical tool which allows us to prove our main result Theorem \ref{thmmain} in \S \ref{parcurv}. Putting $\AAA_0 = k \otimes_R \AAA$, the forgetful functor $\iota: D(\AAA_0) \lra D(\AAA)$ has a left adjoint $k \otimes^L_R -$ and a right adjoint $\RHom_R(k,-)$ for which $\Kern(k \otimes^L_R -) = \Kern(\RHom_R(k,-))$. In contrast with the infinitesimal situation ($R = k[t]/t^n$), in general this kernel is non-zero, so we cannot apply a suitable version of Nakayama's Lemma. Roughly speaking, for our purpose, we force a derived version of Nakayama's Lemma to hold by working with the torsion derived category $D_{tor}(\AAA) \subseteq D(\AAA)$, which is equivalent to the quotient $D(\AAA)/\Kern(\RHom_R(k,-))$. In \S \ref{parcompgen}, we prove some basic results, like the fact that $D_{tor}(\AAA)$ is compactly generated by the objects $\iota(A)$ for $A \in \AAA$ (Proposition \ref{propcompgen}), and in \S \ref{partormor} we introduce the notion of torsion Morita equivalence. 

The results in this section are related to various works on torsion and complete derived categories, which have been developed in different contexts and levels of generality, see for instance \cite{alonsojeremiaslipman}, \cite{kashiwarashapira}, \cite{portashaulyekutieli}, \cite{PSY2}, \cite{positselski3}, \cite{positselski:contra}, \cite{positselski:MGM}. Although we only develop a rudimentary torsion side of the story here, it is likely that a richer picture including an appropriate version of MGM duality exists in this setting. The details of this picture will be investigated elsewhere.

\subsection{The torsion derived category}\label{partor}

Let $\AAA$ be an $R$-cofibrant dg category.
The external Hom and tensor functors
$$- \otimes_R -: \Mod(R) \times \Mod(\AAA) \lra \Mod(\AAA): (X, M) \longmapsto X \otimes_R M$$
$$\Hom_R(-,-): \Mod(R) \times \Mod(\AAA) \lra \Mod(\AAA): (X, M) \longmapsto \Hom_R(X, M)$$
give rise to derived functors
$$- \otimes^L_R -: D(R) \times D(\AAA) \lra D(\AAA): (X, M) \longmapsto X \otimes^L_R M$$
$$\RHom_R(-,-): D(R) \times D(\AAA) \lra D(\AAA): (X, M) \longmapsto \RHom_R(X, M)$$
which are balanced, i.e. they can be calculated in either argument.

\begin{lemma}\label{lembas}
Let $X \in \per(R)$ be a perfect object and consider $M, N \in D(\AAA)$. We have
$$X \otimes^L_R \RHom_{\AAA}(M,N) = \RHom_{\AAA}(M, X \otimes^L_R N) = \RHom_{\AAA}(\RHom_R(X,M), N).$$
\end{lemma}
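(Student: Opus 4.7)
The strategy is a d\'evissage on the perfect complex $X$. Recall that $\per(R)$ is the smallest thick subcategory of $D(R)$ containing $R$. All three functors $X \mapsto X \otimes^L_R \RHom_\AAA(M,N)$, $X \mapsto \RHom_\AAA(M, X \otimes^L_R N)$ and $X \mapsto \RHom_\AAA(\RHom_R(X,M), N)$ are covariantly triangulated and preserve finite direct summands: the first two manifestly so, and the third because it is the composition of two contravariant triangulated functors.

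Next I would exhibit canonical natural transformations
$$\alpha_X \colon X \otimes^L_R \RHom_\AAA(M, N) \lra \RHom_\AAA(M, X \otimes^L_R N),$$
$$\beta_X \colon \RHom_\AAA(\RHom_R(X, M), N) \lra X \otimes^L_R \RHom_\AAA(M, N)$$
that become identities when $X = R$ under the canonical identifications $R \otimes^L_R Y \cong Y$ and $\RHom_R(R, Y) \cong Y$. The cleanest way to produce these is to invoke the perfect duality $X^{\vee\vee} \cong X$ (where $X^\vee = \RHom_R(X, R)$ is again perfect) together with the isomorphisms $X \otimes^L_R Y \cong \RHom_R(X^\vee, Y)$ and $\RHom_R(X, M) \cong X^\vee \otimes^L_R M$, valid for $X$ perfect. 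Both $\alpha_X$ and $\beta_X$ then reduce to instances of the standard Hom-tensor adjunction and the commutation of internal Homs.

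With these maps in hand, the d\'evissage concludes quickly: the full subcategory of $X \in D(R)$ for which both $\alpha_X$ and $\beta_X$ are isomorphisms is triangulated and closed under direct summands, and it contains $R$, hence contains all of $\per(R)$. The main conceptual step is setting up the natural transformations and checking that they are honestly natural in $X$ (not merely pointwise isomorphisms for each $X$); once this is arranged via the perfect-duality rewrite, the verification for $X = R$ is tautological and the d\'evissage is formal.
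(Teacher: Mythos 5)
Your proposal is correct and follows essentially the same route as the paper, whose entire proof is the one-line d\'evissage ``all three expressions define exact functors in $X$ which agree on $X = R$.'' You in fact supply the one detail the paper elides --- the construction of genuine natural comparison maps $\alpha_X$, $\beta_X$ (rather than mere pointwise isomorphisms), without which the subcategory of good $X$ need not be triangulated --- so your write-up is, if anything, more complete than the original.
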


\begin{proof}
All three expressions define exact functors in $X$ which agree on $X = R$.
\end{proof}

Consider $M \in \Mod(\AAA)$, $A \in \AAA$ and $X = R/t^n$. We have
$$\Hom_R(R/t^n, M)^i(A)  \cong \{ x \in M^i(A) \,\, |\,\, t^nx = 0\} \subseteq M^i(A).$$
The natural projection map $R/t^n \lra R/t^{n-1}$ induces a morphism
$$\Hom_R(R/t^{n-1}, M) \lra \Hom_R(R/t^{n}, M)$$
which corresponds to the natural inclusion between submodules of $M$.
We thus obtain the \emph{torsion submodule} of $M$, which is
$$\Gamma(M) = \colim_{n \geq 0} \Hom_R(R/t^n, M).$$
The torsion functor $\Gamma: \Mod(\AAA) \lra \Mod(\AAA)$ is left exact and we obtain its right derived functor
$R\Gamma: D(R) \lra D(R)$ which satisfies
$$R\Gamma(M) = \colim_{n \geq 0} \RHom_{R}(R/t^n, M).$$

\begin{definition}
Let $M \in D(\BBB)$.
\begin{enumerate}
\item $M$ is \emph{derived torsionfree} if $M \in \Kern(R\Gamma)$.
\item $M$ is \emph{derived torsion} if $\RHom_{\BBB}(M,N) = 0$ for all $N$ with $R\Gamma(N) = 0$. 
\end{enumerate}
\end{definition}

We thus obtain the full subcategory $D_{tor}(\AAA) = ^{\perp}\Kern(R\Gamma) \subseteq D(\AAA)$ of torsion modules, which we call the \emph{torsion derived category}. Clearly, $D_{tor}(\AAA)$ is a triangulated subcategory closed under coproducts.

For $M \in D(\AAA)$, there is a canonical morphism $\varphi_M: R\Gamma(M) \lra M$ induced by the morphisms $\RHom_R(R/t^{n-1}, M) \lra \RHom_R(R, M) = M$. 

\begin{lemma}\label{lemidpot}
For $M \in D(\AAA)$, the morphism $R\Gamma(\varphi_M): R\Gamma(R\Gamma(M)) \lra R\Gamma(M)$ is an isomorphism.
\end{lemma}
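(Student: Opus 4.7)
The strategy is to reduce the claim to showing that $\cone(\varphi_M)$ lies in $\Kern(R\Gamma)$ (i.e., is derived torsion-free in the terminology of the paper). If so, applying $R\Gamma$ to the triangle $R\Gamma(M) \xrightarrow{\varphi_M} M \to \cone(\varphi_M) \to$ immediately forces $R\Gamma(\varphi_M)$ to be an isomorphism. Since $R\Gamma = \colim_n \RHom_R(R/t^n,-)$, the vanishing $R\Gamma(\cone(\varphi_M)) = 0$ is equivalent to $\RHom_R(R/t^n, \varphi_M)$ being an isomorphism for every $n \geq 1$, and this is what I plan to verify directly.

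For the computation, note that $R/t^n$ is perfect in $D(R)$ (being $\cone(R \xrightarrow{t^n} R)$), so $\RHom_R(R/t^n, -)$ commutes with filtered colimits. Combined with the tensor-Hom adjunction in Lemma \ref{lembas}, this yields
\[
\RHom_R(R/t^n, R\Gamma(M)) \cong \colim_m \RHom_R(R/t^n \otimes^L_R R/t^m, M).
\]
For $m \geq n$, the resolution $R \xrightarrow{t^n} R$ of $R/t^n$ represents $R/t^n \otimes^L_R R/t^m$ by the two-term complex $[R/t^m \xrightarrow{t^n} R/t^m]$, whose kernel and cokernel are both canonically $R/t^n$. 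Since $\Ext^2_R(R/t^n, R/t^n) = 0$ (visible from the same resolution), the associated boundary class $R/t^n \to R/t^n[2]$ vanishes and we obtain a splitting $R/t^n \otimes^L_R R/t^m \cong R/t^n \oplus R/t^n[1]$.

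The crucial calculation is how the transition maps of the colimit system behave under this splitting. The surjection $R/t^m \twoheadrightarrow R/t^{m-1}$ induces a chain map $R/t^n \otimes^L R/t^m \to R/t^n \otimes^L R/t^{m-1}$ which is the identity on the cokernel summand, but on the kernel summand sends the generator $t^{m-n} \bmod t^m$ to $t^{m-n} \bmod t^{m-1} = t\cdot t^{m-1-n}$, hence acts as multiplication by $t$. Dualizing and taking the colimit, the $R/t^n[1]$-factor contributes $\colim(N \xrightarrow{t} N \xrightarrow{t} \cdots)$ with $N = \RHom_R(R/t^n, M)[-1]$; because $t^n$ annihilates $R/t^n$ and hence $N$, this colimit is zero. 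The remaining $R/t^n$-factor contributes $\RHom_R(R/t^n, M)$, and a direct check shows that the induced map to $\RHom_R(R/t^n, M)$ coincides with $\RHom_R(R/t^n, \varphi_M)$ and is the identity, since the canonical unit $R/t^n = R/t^n \otimes^L_R R \to R/t^n \otimes^L_R R/t^m$ is precisely the inclusion of the cokernel summand.

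The main obstacle is identifying the transition maps under the splitting: the decomposition $R/t^n \otimes^L R/t^m \cong R/t^n \oplus R/t^n[1]$ is not natural in $m$, and tracking that the kernel component carries a nontrivial multiplication by $t$ is what makes the proof work, since it produces the nilpotent action whose colimit vanishes in the filtered system. At a higher level, this establishes the idempotence $R\Gamma \circ R\Gamma \simeq R\Gamma$, which is a standard colocalization property but which must be proved by hand here since the full torsion/completion adjunction is not yet set up in the paper.
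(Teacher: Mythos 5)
The paper states Lemma~\ref{lemidpot} without proof, so there is no argument of the authors to compare against; judged on its own terms, your proof is essentially correct. The reduction to $R\Gamma(\cone(\varphi_M))=0$, the identification $\RHom_R(R/t^n,R\Gamma(M))\cong \colim_m\RHom_R(R/t^n\otimes^L_RR/t^m,M)$ via compactness of $R/t^n$ and Lemma~\ref{lembas}, the computation of $H^0$ and $H^{-1}$ of $R/t^n\otimes^L_RR/t^m$ as two copies of $R/t^n$, the identification of the induced transition maps as $1$ on $H^0$ and multiplication by $t$ on $H^{-1}$, the vanishing of $\colim(N\xrightarrow{t}N\xrightarrow{t}\cdots)$ because $t^n$ annihilates $N$, and the identification of the surviving component with $\RHom_R(R/t^n,\varphi_M)$ are all correct.

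The one step that is not fully closed is exactly the one you flag: since the splittings $R/t^n\otimes^L_RR/t^m\cong R/t^n\oplus R/t^n[1]$ cannot be chosen compatibly with the transition maps, the colimit system is not literally a direct sum of two systems. The transition maps are only triangular, with a possibly nonzero component in $\Ext^1_R(R/t^n,R/t^n)\cong R/t^n\neq 0$ from the $H^0$-summand to the shifted $H^{-1}$-summand, and ``tracking the multiplication by $t$'' does not by itself dispose of that component. The clean repair is to discard the splitting and use instead the canonical truncation triangle $H^{-1}(K_m)[1]\lra K_m\lra H^0(K_m)$ for $K_m=R/t^n\otimes^L_RR/t^m$, which \emph{is} natural in $m$: applying $\RHom_R(-,M)$ and passing to the filtered colimit of the long exact cohomology sequences, the $H^{-1}$-system vanishes for your reason and the $H^0$-system yields $\RHom_R(R/t^n,M)$ with identity transitions. (Equivalently, the triangular shape gives a short exact sequence of directed systems whose sub- and quotient systems are the two you describe.) A shorter route, closer to the paper's own toolkit, is to use Lemma~\ref{lemcone} to write $R\Gamma\cong K\otimes^L_R-$ with $K=\colim_n\Sigma^{-1}R/t^n=\Sigma^{-1}(R[t^{-1}]/R)$; then $R/t^n\otimes_RR[t^{-1}]=0$ gives $K\otimes^L_RR[t^{-1}]=0$, and the triangle $R\to R[t^{-1}]\to\Sigma K$ yields $K\otimes^L_RK\cong K$ at once. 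Either way the gap is routine to fill, so I would count your proof as correct up to this naturality point.
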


\begin{lemma}\label{lemnak}
The following are equivalent for $M \in D(\AAA)$:
\begin{enumerate}
\item $R\Gamma(M) = 0$;
\item $\RHom_R(R/t^n, M) = 0$ for all $n \geq 0$;
\item $\RHom_R(R/t^n, M) = 0$ for some $n \geq 0$;
\item $\RHom_R(k, M) = 0$.
\end{enumerate}
\end{lemma}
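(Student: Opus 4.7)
The strategy is to prove the cycle $(4) \Rightarrow (2) \Rightarrow (3) \Rightarrow (4)$ together with the trivial $(2) \Rightarrow (1)$, leaving $(1) \Rightarrow (4)$ as the essential step.

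The equivalences $(2) \Leftrightarrow (3) \Leftrightarrow (4)$ are standard. For $(4) \Rightarrow (2)$, I would induct on $n$ using the short exact sequence $0 \to k \xrightarrow{1 \mapsto t^n} R/t^{n+1} \to R/t^n \to 0$; applying $\RHom_R(-,M)$ yields a triangle whose outer terms vanish by the induction hypothesis and hypothesis $(4)$, forcing the middle to vanish. The implication $(2) \Rightarrow (3)$ is trivial (take any $n \geq 1$). For $(3) \Rightarrow (4)$, the resolution $R \xrightarrow{t^n} R$ of $R/t^n$ identifies $\RHom_R(R/t^n, M)$ with the fiber of $t^n$ acting on $M$, so vanishing means $t^n$ acts invertibly on $M$ in $D(\AAA)$; at the level of cohomology, $t^n$ iso on each $H^i(M)$ easily forces $t$ iso on each $H^i(M)$ by an elementary $R$-module argument, hence $t$ iso on $M$ in $D(\AAA)$, which via the analogous identification $\RHom_R(k, M) \cong \mathrm{fib}(t: M \to M)$ yields $(4)$. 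Finally, $(2) \Rightarrow (1)$ is immediate from the colimit definition of $R\Gamma$.

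For the main implication $(1) \Rightarrow (4)$, the plan is to establish the fiber sequence
\[R\Gamma(M) \to M \to M[t^{-1}]\]
where $M[t^{-1}] := R[t^{-1}] \otimes^L_R M$. This is obtained by assembling the triangles $\RHom_R(R/t^n, M) \to M \xrightarrow{t^n} M$ into a compatible tower and passing to the filtered colimit, using the telescope presentation $M[t^{-1}] \simeq \colim(M \xrightarrow{t} M \xrightarrow{t} \cdots)$. Applying $\RHom_R(k, -)$ gives a triangle
\[\RHom_R(k, R\Gamma(M)) \to \RHom_R(k, M) \to \RHom_R(k, M[t^{-1}]).\]
The crucial observation is that $\RHom_R(k, M[t^{-1}]) = 0$. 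Indeed, $k$ has the finite projective resolution $R \xrightarrow{t} R$, hence is compact in $D(R)$, so $\RHom_R(k, -)$ commutes with filtered colimits; moreover $t$ acts as zero on $\RHom_R(k, N)$ for any $N$ (since $k$ is $t$-torsion), so $\RHom_R(k, M[t^{-1}]) \cong \colim\bigl(\RHom_R(k, M) \xrightarrow{0} \RHom_R(k, M) \xrightarrow{0} \cdots\bigr) = 0$. Therefore $\RHom_R(k, R\Gamma(M)) \simeq \RHom_R(k, M)$, and vanishing of $R\Gamma(M)$ forces vanishing of $\RHom_R(k, M)$.

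The main technical obstacle is the rigorous construction of the fiber sequence $R\Gamma(M) \to M \to M[t^{-1}]$. This requires choosing coherent morphisms of triangles compatible with the projections $R/t^n \to R/t^{n-1}$, which is classical in local cohomology but calls for some bookkeeping in identifying which transition maps on the ``outer'' copies of $M$ yield the identity (producing $M$) versus multiplication by $t$ (producing the localization $M[t^{-1}]$).
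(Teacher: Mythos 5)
Your proposal is correct. The paper offers no actual argument here --- its proof is the single sentence ``This follows from a derived version of Nakayama's Lemma'' --- so you have supplied in full what the authors take as standard. Your dévissage for $(4)\Rightarrow(2)$, the identification $\RHom_R(R/t^n,M)\cong\mathrm{fib}(t^n\colon M\to M)$ for $(3)\Rightarrow(4)$ (which is exactly the paper's later Lemma \ref{lemcone}), and the telescope/localization triangle $R\Gamma(M)\to M\to M[t^{-1}]$ with $\RHom_R(k,M[t^{-1}])=0$ for $(1)\Rightarrow(4)$ is precisely the classical local-cohomology proof (as in Alonso--Jerem\'{\i}as--Lipman, which the paper cites), carried out objectwise over $\AAA$. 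The bookkeeping you flag for the transition maps comes out as you describe: under the resolution $R\xrightarrow{t^n}R$ of $R/t^n$, the surjection $R/t^{n+1}\to R/t^n$ lifts to $(\mathrm{id},\,t)$, giving the identity on the copy of $M$ and multiplication by $t$ on the shifted copy, whence the colimit is $\mathrm{fib}(M\to M[t^{-1}])$. If you want to avoid the homotopy-colimit bookkeeping altogether, an equivalent elementary route for $(1)\Rightarrow(4)$ is to compute $H^iR\Gamma(M)(A)$ from the short exact sequences $0\to H^{i-1}M(A)/t^n\to H^i\RHom_R(R/t^n,M)(A)\to H^iM(A)[t^n]\to 0$: vanishing of the colimit forces the $t$-power torsion of each $H^iM(A)$ to vanish and $\colim(H^iM(A)/t^n,\times t)=0$, which together give $H^iM(A)=tH^iM(A)$ and $H^iM(A)[t]=0$, i.e.\ $\RHom_R(k,M)=0$. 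The only cosmetic point is that item (3) should be read with $n\geq 1$, since $R/t^0=0$ makes the $n=0$ case vacuous.
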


\begin{proof}
This follows from a derived version of Nakayama's Lemma.
\end{proof}

For ${M} = ({M}, d) \in D({\AAA})$, put ${M}_{t^n} = {M} \oplus \Sigma^{-1}{M}$ with differential
$$\begin{pmatrix} d & 0 \\ -t^n & -d \end{pmatrix}.$$
Clearly, ${M}_{t^n} \cong \Sigma^{-1} \cone(t^n: {M} \lra {M})$.

\begin{lemma}\label{lemcone}
We have $$\RHom_R(R/t^n, {M}) = {M}_{t^n} = \Sigma^{-1}R/t^n \otimes^L_R {M}.$$ The canonical map ${M} \lra R/t^n \otimes^L_R {M}$ is given by injection from the second factor ${M} \cong \Sigma \Sigma^{-1} {M} \lra \Sigma {M}_{t^n}$ and the canonical map $\RHom_R(R/t^n, {M}) \lra {M}$ is given by projection on the first factor ${M}_{t^n} \lra {M}$.
\end{lemma}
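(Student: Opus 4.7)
The plan is to use the standard length-two free resolution $P = (R \xrightarrow{\,t^n\,} R)$ of $R/t^n$, with the two copies of $R$ placed in cohomological degrees $-1$ and $0$ so that the quasi-isomorphism $P \to R/t^n$ is the quotient map on $P^0 = R$ and zero on $P^{-1} = R$. Since $\AAA$ is $R$-cofibrant, we may compute $\RHom_R(R/t^n, M) \simeq \Hom_R(P, M)$ and $R/t^n \otimes^L_R M \simeq P \otimes_R M$ directly from this resolution. The entire proof then reduces to unwinding these two totalizations and tracing the canonical comparison maps.

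For the first identification, $\Hom_R(P, M)$ in cohomological degree $j$ is the product $\Hom_R(R, M^j) \oplus \Hom_R(R, M^{j-1}) = M^j \oplus M^{j-1}$, which is precisely $(M \oplus \Sigma^{-1}M)^j$. The total differential has the usual $d_M$ on each factor together with the off-diagonal term induced by the internal differential $t^n$ of $P$; with the sign convention of the paper this produces exactly the matrix $\bigl(\begin{smallmatrix} d & 0 \\ -t^n & -d \end{smallmatrix}\bigr)$, so $\Hom_R(P, M) = M_{t^n}$. For the tensor side, $P \otimes_R M$ has degree-$j$ part $M^{j+1} \oplus M^j = (\Sigma M \oplus M)^j$, and a one-step shift yields $\Sigma^{-1}(P \otimes_R M) = M \oplus \Sigma^{-1}M$ with the same two-by-two differential, whence $\Sigma^{-1} R/t^n \otimes^L_R M = M_{t^n}$.

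It remains to identify the two canonical maps. The quotient $R \to R/t^n$ admits the evident cochain lift $R \hookrightarrow P$ that includes $R$ as $P^0$. Applying $\Hom_R(-, M)$ turns this into the projection $\Hom_R(P, M) = M \oplus \Sigma^{-1}M \to \Hom_R(R, M) = M$ onto the first summand, which by construction realizes the canonical map $\RHom_R(R/t^n, M) \to M$. Tensoring the same inclusion $R \hookrightarrow P$ with $M$ gives $M = R \otimes_R M \to P \otimes_R M = \Sigma M \oplus M$, inclusion into the second summand; after the shift normalizing $\Sigma^{-1}(P \otimes_R M) = M_{t^n}$, this is precisely the inclusion $M \cong \Sigma\Sigma^{-1}M \hookrightarrow \Sigma M_{t^n}$ described in the statement.

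There is no real obstacle here beyond bookkeeping: once the resolution $P$ is fixed and a sign convention compatible with the definition of $M_{t^n}$ is chosen, both equalities and both maps read off directly. The only place where one has to pay attention is pinning down the sign in the off-diagonal entry, which amounts to matching the convention for $\Sigma$ and for the Koszul rule in $\Hom_R(P, M)$ with the matrix written in the paragraph preceding the lemma.
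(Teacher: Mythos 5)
Your proof is correct and follows essentially the same route as the paper: the paper's entire argument is to invoke the short exact sequence $0 \to R \xrightarrow{t^n} R \to R/t^n \to 0$, i.e.\ exactly the two-term free resolution $P$ you use, and you have simply carried out the resulting computation of $\Hom_R(P,M)$ and $P\otimes_R M$ and of the comparison maps in full detail. No discrepancy beyond the sign bookkeeping you already flag.
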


\begin{proof}
This follows by using the exact sequence
$$\xymatrix{ 0 \ar[r] & {R} \ar[r]_{t} & {R} \ar[r] & R/t^n \ar[r] & 0}.$$
\end{proof}

\begin{corollary}
The following are equivalent for $M \in D(\AAA)$:
\begin{enumerate}
\item $R\Gamma(M) = 0$;
\item $R/t^n \otimes^L_R M = 0$ for all $n \geq 0$;
\item $R/t^n \otimes^L_R M = 0$ for some $n \geq 0$;
\item $k \otimes^L_R M = 0$.
\end{enumerate}
\end{corollary}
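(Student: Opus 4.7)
The plan is to derive this Corollary directly from Lemma \ref{lemnak} together with Lemma \ref{lemcone}, which provides the bridge between the $\RHom_R(R/t^n, -)$ picture (used in Lemma \ref{lemnak}) and the $R/t^n \otimes^L_R -$ picture (stated in the Corollary).

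First, I observe that Lemma \ref{lemcone} gives a shift-isomorphism
\[
\RHom_R(R/t^n, M) \;=\; M_{t^n} \;=\; \Sigma^{-1}(R/t^n \otimes^L_R M)
\]
in $D(\AAA)$. Since $\Sigma^{-1}$ is an equivalence of triangulated categories, $\RHom_R(R/t^n, M) = 0$ if and only if $R/t^n \otimes^L_R M = 0$. Applying this for every $n \geq 0$ and for the single value $n$ appearing in Lemma \ref{lemnak}, the equivalences $(1)\Leftrightarrow(2)\Leftrightarrow(3)$ of the Corollary follow immediately from the corresponding equivalences in Lemma \ref{lemnak}.

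For the equivalence with condition $(4)$, I note that $k = R/t$, so $k \otimes^L_R M = 0$ is a special instance of condition $(3)$, namely the case $n = 1$. Conversely, taking $n = 1$ in Lemma \ref{lemcone} gives $\RHom_R(k, M) = \Sigma^{-1}(k \otimes^L_R M)$, so $k \otimes^L_R M = 0$ is equivalent to $\RHom_R(k, M) = 0$, which by Lemma \ref{lemnak} is equivalent to $(1)$. This closes the cycle of implications.

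There is no real obstacle here; the content of the Corollary is essentially a restatement of Lemma \ref{lemnak} under the duality supplied by Lemma \ref{lemcone}, which identifies the Koszul-type complex computing $\RHom_R(R/t^n, M)$ with the shifted tensor product $\Sigma^{-1}(R/t^n \otimes^L_R M)$. The one point worth emphasising in the write-up is that the identification of Lemma \ref{lemcone} is natural in $M$ and commutes with the transition maps $R/t^{n-1} \to R/t^n$, so passing between the two pictures is entirely formal and the derived Nakayama Lemma argument of Lemma \ref{lemnak} transports verbatim.
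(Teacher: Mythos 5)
Your proof is correct and follows exactly the paper's own route: the paper deduces the corollary from Lemma \ref{lemnak} combined with the identification $\RHom_R(R/t^n, M) = \Sigma^{-1}(R/t^n \otimes^L_R M)$ of Lemma \ref{lemcone}, which is precisely your argument. Your extra remark on naturality in $M$ is a sensible point to make explicit but does not change the substance.
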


\begin{proof}
This follows from Lemma \ref{lemnak} and Lemma \ref{lemcone}.
\end{proof}

\begin{lemma}\label{key2}
Let $M, N \in D(\AAA)$. We have
$$\begin{aligned}
\RHom_{\AAA}(R\Gamma(M), N) & = \RHom_{\AAA}(\colim_{n \geq 0} \RHom_R(R/t^n, M), N) \\
& = \lim_{n \geq 0} \RHom_{\AAA}(\RHom_R(R/t^n, M), N)\\
& = \lim_{n \geq 0} \RHom_{\AAA}(M, R/t^n \otimes^L_R N)\\
\end{aligned}$$
\end{lemma}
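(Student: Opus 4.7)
The plan is to prove the three equalities in sequence, each following from a standard formal property.

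The first equality is simply the definition of $R\Gamma(M)$ as $\colim_{n \geq 0} \RHom_R(R/t^n, M)$ given just before the statement. For the second equality, I would invoke the fact that the functor $\RHom_{\AAA}(-, N)$, being a (derived) contravariant right adjoint functor (i.e.\ it is right adjoint to the opposite of $\RHom_{\AAA}(-,N)$ itself), sends homotopy colimits in its first variable to homotopy limits. Concretely, writing the colimit as a mapping telescope (the homotopy colimit of the sequential diagram $\RHom_R(R/t^n, M)$), one obtains a corresponding Milnor-type sequence that identifies $\RHom_{\AAA}(\mathrm{hocolim}_n \RHom_R(R/t^n, M), N)$ with $\mathrm{holim}_n \RHom_{\AAA}(\RHom_R(R/t^n, M), N)$.

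For the third (and main) equality, I would apply Lemma \ref{lembas} termwise with $X = R/t^n$. The point is that $R/t^n$ is a perfect object of $D(R)$, since it admits the finite free resolution $0 \to R \xrightarrow{t^n} R \to R/t^n \to 0$; therefore Lemma \ref{lembas} gives
\[
\RHom_{\AAA}(\RHom_R(R/t^n, M), N) \;=\; \RHom_{\AAA}(M, R/t^n \otimes^L_R N)
\]
for each $n$. Taking the limit in $n$ yields the desired equality.

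The only delicate point is the interchange in the second step: one needs that the structural maps $R/t^{n+1} \to R/t^n$ really do turn the colimit defining $R\Gamma(M)$ into a filtered (in fact sequential) homotopy colimit, so that $\RHom_{\AAA}(-,N)$ converts it into the corresponding sequential homotopy limit. Since $\AAA$ is $R$-cofibrant and the transition maps come from the short exact sequences $0 \to R/t \to R/t^{n+1} \to R/t^n \to 0$, this is a routine homotopical check and poses no real obstacle; the remaining identifications are purely formal.
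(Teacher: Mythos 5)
Your proposal is correct and follows essentially the same route as the paper, whose entire proof is the single line ``This follows from Lemma \ref{lembas}'': the substantive step in both cases is applying Lemma \ref{lembas} with $X = R/t^n$ perfect, while the first two equalities are the definition of $R\Gamma$ and the formal conversion of a (homotopy) colimit in the first variable of $\RHom_{\AAA}(-,N)$ into a (homotopy) limit. Your additional care about the telescope/Milnor sequence just makes explicit what the paper leaves implicit.
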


\begin{proof}
This follows from Lemma \ref{lembas}.
\end{proof}

\begin{lemma}\label{lemcc}
Let $M, N \in D(\AAA)$.
\begin{enumerate}
\item $R\Gamma(M) \in D_{tor}(\AAA)$.
\item $M \in D_{tor}(\AAA)$ if and only if $\varphi_M: R\Gamma(M) \lra M$ is an isomorphism.
\end{enumerate}
\end{lemma}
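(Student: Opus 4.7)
The plan is to reduce both parts to the formula in Lemma \ref{key2} and the idempotency statement in Lemma \ref{lemidpot}, together with the characterisation of $\Kern(R\Gamma)$ from Lemma \ref{lemnak} and Lemma \ref{lemcone}.

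For (1), I would check the defining right-orthogonality condition directly. Take any $N \in D(\AAA)$ with $R\Gamma(N) = 0$. By Lemma \ref{key2},
\[
\RHom_{\AAA}(R\Gamma(M), N) = \lim_{n \geq 0} \RHom_{\AAA}(M, R/t^n \otimes^L_R N).
\]
Lemma \ref{lemcone} identifies $R/t^n \otimes^L_R N$ with (a shift of) $\RHom_R(R/t^n, N)$, and Lemma \ref{lemnak} says this vanishes whenever $R\Gamma(N) = 0$. Hence the whole limit vanishes, so $R\Gamma(M) \in {}^{\perp}\Kern(R\Gamma) = D_{tor}(\AAA)$.

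For (2), the ``if'' direction is immediate from (1): if $\varphi_M$ is an isomorphism then $M \cong R\Gamma(M) \in D_{tor}(\AAA)$. For the ``only if'' direction, complete $\varphi_M$ to a distinguished triangle
\[
R\Gamma(M) \xrightarrow{\varphi_M} M \lra C \lra \Sigma R\Gamma(M).
\]
Applying the triangulated functor $R\Gamma$ and invoking Lemma \ref{lemidpot}, which says $R\Gamma(\varphi_M)$ is an isomorphism, gives $R\Gamma(C) = 0$. Now apply $\RHom_{\AAA}(-,C)$ to the above triangle: the term $\RHom_{\AAA}(M,C)$ vanishes because $M \in D_{tor}(\AAA)$ and $R\Gamma(C) = 0$, while the term $\RHom_{\AAA}(R\Gamma(M), C)$ vanishes by part (1) applied to $C$. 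The resulting long exact sequence forces $\RHom_{\AAA}(C,C) = 0$, hence $\mathrm{id}_C = 0$ and $C = 0$, so $\varphi_M$ is an isomorphism.

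The only mildly subtle step is the cone argument in (2); everything else is formal manipulation of the adjunction-style identities already recorded in Lemmas \ref{lembas}--\ref{key2}.
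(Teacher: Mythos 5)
Your proof is correct and follows essentially the same route as the paper: part (1) via Lemma \ref{key2} together with the vanishing of $R/t^n\otimes^L_R N$ for $N\in\Kern(R\Gamma)$, and part (2) by showing the cone of $\varphi_M$ lies in both $D_{tor}(\AAA)$ and $\Kern(R\Gamma)$ (via Lemma \ref{lemidpot}) and hence vanishes. One trivial slip: the vanishing of $\RHom_{\AAA}(R\Gamma(M),C)$ uses part (1) applied to $M$ (giving $R\Gamma(M)\in D_{tor}(\AAA)$) together with $R\Gamma(C)=0$, not part (1) applied to $C$.
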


\begin{proof}
(1) Consider $N \in \Kern(R\Gamma)$. By Lemma \ref{key2} we have $\RHom_{\AAA}(R\Gamma(M), N) =  \lim_{n \geq 0} \RHom_{\BBB}(M, R/t^n \otimes^L_R N) = 0$.
(2) If $\varphi_M$ is an isomorphism then $M \in D_{tor}(\AAA)$ by (1). Conversely, if $M \in D_{tor}(\AAA)$ then also $\cone(\varphi_M) \in D_{tor}(\AAA)$. But by Lemma \ref{lemidpot}, $R\Gamma(\cone(\varphi_M)) = 0$. Hence $\cone(\varphi_M) = 0$ as desired.
(3) For $M \in D_{tor}(\AAA)$ we have $\RHom_{\AAA}(M, N) \cong \RHom_{\AAA}(R\Gamma(M), N) \cong L \Lambda(\RHom_{\AAA}(M, N))$ by Lemma \ref{key2}.
\end{proof}

Consider the Verdier quotient $D(\AAA)/\Kern(R\Gamma)$. We obtain an induced functor $R\Gamma: D(\AAA)/\Kern(R\Gamma) \lra D_{tor}(\AAA)$.

\begin{proposition}
The composition $D_{tor}(\AAA) \lra D(\AAA) \lra D(\AAA)/\Kern(R\sigma)$ is an equivalence of categories with inverse equivalence given by $R\Gamma: D(\AAA)/\Kern(R\Gamma) \lra D_{tor}(\AAA)$.
\end{proposition}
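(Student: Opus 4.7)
The plan is to recognize this as a standard Bousfield-style localization and verify three pieces in turn: (i) $R\Gamma$ descends to the Verdier quotient and takes values in $D_{tor}(\AAA)$; (ii) the composition $\iota : D_{tor}(\AAA) \hookrightarrow D(\AAA) \to D(\AAA)/\Kern(R\Gamma)$ is essentially surjective; (iii) it is fully faithful. All the technical input has already been assembled in Lemmas \ref{lemidpot} and \ref{lemcc}, so the work is really one of bookkeeping against the Verdier calculus.

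First I would observe that $R\Gamma$ vanishes tautologically on $\Kern(R\Gamma)$, so by the universal property of the Verdier quotient it descends to a functor $\overline{R\Gamma}: D(\AAA)/\Kern(R\Gamma) \to D(\AAA)$, whose image lies in $D_{tor}(\AAA)$ by Lemma \ref{lemcc}(1). This $\overline{R\Gamma}$ is the candidate inverse. For essential surjectivity, given $M \in D(\AAA)$ I would complete the canonical map $\varphi_M$ to a distinguished triangle
$$R\Gamma(M) \xrightarrow{\varphi_M} M \longrightarrow \cone(\varphi_M) \longrightarrow \Sigma R\Gamma(M).$$
Applying $R\Gamma$ and invoking Lemma \ref{lemidpot}, which says $R\Gamma(\varphi_M)$ is invertible, gives $R\Gamma(\cone(\varphi_M)) = 0$; hence $\varphi_M$ becomes an isomorphism in $D(\AAA)/\Kern(R\Gamma)$, and every object of the quotient is isomorphic to $\iota(R\Gamma(M))$ for some $M$.

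For fully faithfulness, I would exploit the defining orthogonality $D_{tor}(\AAA) = {}^{\perp}\Kern(R\Gamma)$. A morphism in the quotient from $M \in D_{tor}(\AAA)$ to $N$ is represented by a roof $M \xleftarrow{s} X \to N$ with $\cone(s) \in \Kern(R\Gamma)$. Applying $\Hom_{D(\AAA)}(M,-)$ to the triangle $X \xrightarrow{s} M \to \cone(s)$ and using that all groups $\Hom_{D(\AAA)}(M, \Sigma^i \cone(s))$ vanish, the map $s_\ast$ is an isomorphism; pulling back $1_M$ produces a canonical section $t: M \to X$, and the roof factors through $M \to X \to N$ as an honest morphism. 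A routine uniqueness check then identifies $\Hom_{D(\AAA)}(M,N)$ with $\Hom_{D(\AAA)/\Kern(R\Gamma)}(M,N)$.

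Finally, the two composites are naturally isomorphic to identities: $\overline{R\Gamma}\circ\iota$ by Lemma \ref{lemcc}(2), and $\iota \circ \overline{R\Gamma}$ by the triangle above, with $\varphi_M$ providing the natural isomorphism in the quotient. The only step requiring genuine care is the Verdier-calculus manipulation in fully faithfulness, but given the clean left-orthogonality of $D_{tor}(\AAA)$ this is a standard maneuver rather than a real obstacle, so I would expect the whole proof to be short once the diagram of implications is laid out.
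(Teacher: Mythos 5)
Your proof is correct, and it uses exactly the ingredients the paper assembles for this purpose (Lemma \ref{lemidpot} for essential surjectivity via the triangle on $\varphi_M$, Lemma \ref{lemcc} for the two composites, and the left-orthogonality defining $D_{tor}(\AAA)$ for fully faithfulness); the paper itself omits the proof, evidently regarding it as this standard Bousfield-colocalization argument. The only cosmetic remark is that the $\Kern(R\sigma)$ in the statement is the same subcategory as $\Kern(R\Gamma)$ by Lemma \ref{lemnak}, which your write-up implicitly and harmlessly assumes.
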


\subsection{Compact objects and generators}\label{parcompgen}

Let $\AAA$ be an $R$-cofibrant dg category and put $\AAA_0 = k \otimes_R \AAA$. The forgetful functor $\iota: D(\AAA_0) \lra D(\AAA)$
has a left adjoint $L\tau: D(\AAA) \lra D(\AAA_0)$ and a right adjoint $R\sigma: D(\AAA) \lra D(\AAA_0)$ with
$\iota L\tau = k \otimes^L_R -$ and $\iota R\sigma = \RHom_R(k, -)$. According to Lemma \ref{lemcone}, we have 
\begin{equation}\label{eqshift}
\iota R\sigma \cong \Sigma^{-1} \iota L\tau.
\end{equation}

\begin{proposition}\label{proptor}
\begin{enumerate}
\item For $X \in D(\AAA_0)$, $\iota X \in D_{tor}(\AAA)$.
\item If the objects $X_i \in \Mod(\AAA_0)$ generate $D(\AAA_0)$, then the objects $\iota X_i \in \Mod(\AAA)$ generate $D_{tor}(\AAA)$.
\end{enumerate}
\end{proposition}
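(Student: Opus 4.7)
The plan is to leverage the adjunction $\iota \dashv R\sigma$ together with the characterization of $\Kern(R\Gamma)$ given by Lemma \ref{lemnak}. Recall that $\iota R\sigma(-) = \RHom_R(k,-)$, so by Lemma \ref{lemnak}, $N \in \Kern(R\Gamma)$ is equivalent to $\iota R\sigma(N) = 0$. Since $\iota$ is the forgetful functor along $\AAA_0 \to \AAA$ and preserves the underlying complexes of $k$-modules, this is in turn equivalent to $R\sigma(N) = 0$. I will use this reformulation as the central reduction throughout.

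For part (1), I would pick any $N \in \Kern(R\Gamma)$ and apply the $\iota \dashv R\sigma$ adjunction to obtain
\[
\RHom_{\AAA}(\iota X, N) \;\cong\; \RHom_{\AAA_0}(X, R\sigma N) \;=\; 0,
\]
since $R\sigma N = 0$ by the reduction above. Therefore $\iota X$ lies in ${}^{\perp}\Kern(R\Gamma) = D_{tor}(\AAA)$, which is the content of (1).

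For part (2), I would test generation in the standard way: take $N \in D_{tor}(\AAA)$ assumed to satisfy $\RHom_{\AAA}(\iota X_i, N[n]) = 0$ for every $i$ and every $n \in \Z$, and aim to conclude $N = 0$. By the same adjunction, this hypothesis translates into $\RHom_{\AAA_0}(X_i, R\sigma(N)[n]) = 0$ for all $i,n$. Since the $X_i$ generate $D(\AAA_0)$ by assumption, this forces $R\sigma(N) = 0$, so by the initial reduction $N \in \Kern(R\Gamma)$. Combining $N \in D_{tor}(\AAA) = {}^{\perp}\Kern(R\Gamma)$ with $N \in \Kern(R\Gamma)$ and applying the defining orthogonality to the pair $(N,N)$ itself yields $\RHom_{\AAA}(N,N) = 0$, hence $N = 0$.

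I do not anticipate any real obstacle: the argument is entirely formal, relying only on the $\iota \dashv R\sigma$ adjunction, the generation hypothesis on the $X_i$, and Lemma \ref{lemnak}. The only small point worth flagging is the identification $\iota R\sigma \simeq \RHom_R(k,-)$ and the fact that $\iota$ being faithful on underlying complexes lets one pass from $\iota R\sigma(N) = 0$ to $R\sigma(N) = 0$; both are immediate from the constructions recalled just before the proposition.
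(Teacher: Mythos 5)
Your proposal is correct and follows essentially the same route as the paper: both parts rest on the adjunction $\RHom_{\AAA}(\iota X, M) \cong \RHom_{\AAA_0}(X, R\sigma M)$ together with the identification of $\Kern(R\Gamma)$ with the vanishing locus of $R\sigma$ via Lemma \ref{lemnak}, and part (2) concludes exactly as in the paper by applying the orthogonality defining $D_{tor}(\AAA)$ to the pair $(N,N)$. The only difference is that you spell out the reduction from $\iota R\sigma(N)=0$ to $R\sigma(N)=0$, which the paper leaves implicit.
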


\begin{proof}
(1) For $M \in D(\AAA)$ we have
$$\RHom_{\AAA}(\iota X, M) = \RHom_{\AAA_0}(X, R\sigma M).$$
Hence, $\iota X \in D_{tor}(\AAA)$. (2) If $M \in D_{tor}(\AAA)$ satisfies $\RHom_{\AAA}(\iota X_i, M) = 0$, it follows that $R\sigma M = 0$ since $(X_i)_i$ generates $D(\AAA_0)$. But then $\RHom_{\AAA}(M,M) = 0$ whence $M = 0$.
\end{proof}

\begin{lemma}\label{lemcoprod}
The functor $R\sigma: D(\AAA) \lra D(\AAA_0)$ preserves coproducts.
\end{lemma}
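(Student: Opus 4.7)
The plan is to reduce the statement to the fact that the \emph{left} adjoint $L\tau$ preserves coproducts, by exploiting the identification \eqref{eqshift}, namely $\iota R\sigma \cong \Sigma^{-1}\iota L\tau$.

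First, given a family $(M_i)_{i \in I}$ in $D(\AAA)$, one forms the canonical comparison map
$$\alpha: \coprod_{i} R\sigma(M_i) \lra R\sigma\bigl(\coprod_{i} M_i\bigr)$$
in $D(\AAA_0)$, arising by adjunction from the coproduct of the counits $\iota R\sigma(M_i) \to M_i$. It suffices to show that $\alpha$ is an isomorphism.

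Second, I apply the forgetful functor $\iota : D(\AAA_0) \lra D(\AAA)$. On the one hand, $\iota$ preserves coproducts, since coproducts in both $D(\AAA_0)$ and $D(\AAA)$ are computed as underlying direct sums of complexes. On the other hand, using \eqref{eqshift} together with the fact that $k \otimes^L_R -$ commutes with arbitrary coproducts, the composition $\iota R\sigma \cong \Sigma^{-1}(k \otimes^L_R -)$ also preserves coproducts. Consequently both sides of $\iota(\alpha)$ are naturally isomorphic to $\coprod_{i} \Sigma^{-1}(k \otimes^L_R M_i)$, and tracing through the identifications one sees that $\iota(\alpha)$ coincides with this natural isomorphism.

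Finally, I conclude by invoking conservativity of $\iota$: if $X \in D(\AAA_0)$ satisfies $\iota X = 0$, then the cohomology groups of $X$, which coincide with those of $\iota X$ up to the change of module structure, all vanish, so $X = 0$. Applied to the cone of $\alpha$, this shows that $\alpha$ itself is an isomorphism in $D(\AAA_0)$, which is exactly what we want. The only potentially delicate point is the identification of $\iota(\alpha)$ with the evident isomorphism of coproducts, but this is a diagram-chase using naturality of the counit and of \eqref{eqshift}; there is no real obstacle.
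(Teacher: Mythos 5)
Your proposal is correct and follows essentially the same route as the paper: form the comparison map, apply the forgetful functor $\iota$, use the identification $\iota R\sigma \cong \Sigma^{-1}\iota L\tau$ from \eqref{eqshift} together with the fact that the left adjoint preserves coproducts, and conclude because $\iota$ reflects isomorphisms. You merely spell out the naturality diagram-chase that the paper leaves implicit.
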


\begin{proof}
For a collection of objects $M_i \in D(\AAA)$, consider the comparison map $\lambda: \oplus_i R\sigma(M_i) \lra R\sigma (\oplus_i M_i)$ in $D(\AAA_0)$. Since $\iota: D(\AAA_0) \lra D(\AAA)$ reflects isomorphisms, it suffices that $ \iota(\lambda)$ is an isomorphism in $D(\AAA)$. But this is indeed the case by \eqref{eqshift}.
\end{proof}

\begin{proposition}\label{propcompgen}
\begin{enumerate}
\item The functor $\iota: D(\AAA_0) \lra D(\AAA)$ preserves compact objects.
\item If the objects $X_i$ compactly generate $D(\AAA_0)$, then the objects $\iota X_i$ compactly  generate $D_{tor}(\AAA)$ inside $D(\AAA)$.
\end{enumerate}
\end{proposition}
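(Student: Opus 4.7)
The plan is to deduce both parts directly from the adjunctions and from the preceding lemmas, essentially by invoking the standard principle that a left adjoint preserves compact objects precisely when the right adjoint preserves coproducts.

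For part (1), let $X \in D(\AAA_0)$ be compact. I would show that $\Hom_{D(\AAA)}(\iota X, -)$ commutes with arbitrary coproducts. Given a family $(M_j)_j$ in $D(\AAA)$, the adjunction $(\iota, R\sigma)$ gives
\begin{equation*}
\Hom_{D(\AAA)}(\iota X, \textstyle\coprod_j M_j) \cong \Hom_{D(\AAA_0)}(X, R\sigma(\textstyle\coprod_j M_j)).
\end{equation*}
By Lemma \ref{lemcoprod}, $R\sigma$ preserves coproducts, so this becomes $\Hom_{D(\AAA_0)}(X, \coprod_j R\sigma(M_j))$. Since $X$ is compact in $D(\AAA_0)$, this in turn equals $\coprod_j \Hom_{D(\AAA_0)}(X, R\sigma(M_j)) \cong \coprod_j \Hom_{D(\AAA)}(\iota X, M_j)$, as desired.

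For part (2), suppose the $X_i$ compactly generate $D(\AAA_0)$. Compactness of each $\iota X_i$ in $D(\AAA)$ is immediate from part (1). Generation in $D_{tor}(\AAA)$ is exactly Proposition \ref{proptor}(2): if $M \in D_{tor}(\AAA)$ satisfies $\Hom_{D(\AAA)}(\iota X_i, \Sigma^n M) = 0$ for all $i$ and $n$, then by adjunction $\Hom_{D(\AAA_0)}(X_i, \Sigma^n R\sigma M) = 0$, so $R\sigma M = 0$, and the argument in Proposition \ref{proptor}(2) forces $M = 0$. Finally, since $D_{tor}(\AAA)$ is closed under coproducts in $D(\AAA)$, compactness of $\iota X_i$ in the ambient $D(\AAA)$ yields compactness in the subcategory $D_{tor}(\AAA)$ as well.

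I do not expect serious obstacles here: both statements are formal consequences of the already-established properties of the adjoint triple $(L\tau, \iota, R\sigma)$ together with Lemma \ref{lemcoprod} and Proposition \ref{proptor}. The only mildly delicate point is the phrasing ``inside $D(\AAA)$'', which I interpret as asking for compactness in the ambient category $D(\AAA)$ (hence a priori stronger than compactness in $D_{tor}(\AAA)$); part (1) handles exactly this stronger notion.
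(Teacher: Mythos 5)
Your proof is correct and follows exactly the paper's route: part (1) is the standard fact that a left adjoint preserves compact objects when its right adjoint preserves coproducts, using the adjunction $\iota \dashv R\sigma$ together with Lemma \ref{lemcoprod}, and part (2) combines (1) with Proposition \ref{proptor}(2). The paper states this in one line; you have merely spelled out the same argument in detail.
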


\begin{proof}
(1) follows from Lemma \ref{lemcoprod} and adjunction. (2) follows from (1) and Proposition \ref{proptor} (2).
\end{proof}

\begin{lemma}\label{lemhomtor}
If $N \in D({\AAA})$ is compact and $M \in D_{tor}({\AAA})$, then $\RHom_{{\AAA}}(N,M) \in D_{tor}(R)$.
\end{lemma}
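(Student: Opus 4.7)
The plan is to show directly that the natural morphism $R\Gamma(\RHom_\AAA(N,M)) \to \RHom_\AAA(N,M)$ is an isomorphism in $D(R)$; by Lemma \ref{lemcc}(2) this is exactly what it means for $\RHom_\AAA(N,M)$ to lie in $D_{tor}(R)$.

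\smallskip

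First, since $M \in D_{tor}(\AAA)$, by Lemma \ref{lemcc}(2) the canonical map $\varphi_M\colon R\Gamma(M) \to M$ is an isomorphism, so
\[
\RHom_\AAA(N,M) \cong \RHom_\AAA\bigl(N,\, \colim_{n \geq 0} \RHom_R(R/t^n, M)\bigr).
\]
Since $N$ is compact in $D(\AAA)$, the functor $\RHom_\AAA(N,-)$ commutes with the sequential homotopy colimit on the right (the telescope is built from coproducts and cones). Thus
\[
\RHom_\AAA(N,M) \cong \colim_{n \geq 0} \RHom_\AAA\bigl(N, \RHom_R(R/t^n, M)\bigr).
\]

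\smallskip

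Next I would identify each term of the colimit. By Lemma \ref{lemcone} we have $\RHom_R(R/t^n, M) \cong \Sigma^{-1}(R/t^n) \otimes_R^L M$. Since $R/t^n$ (and hence $\Sigma^{-1}R/t^n$) is perfect in $D(R)$, Lemma \ref{lembas} applied with $X = \Sigma^{-1}(R/t^n)$ yields
\[
\RHom_\AAA\bigl(N, \Sigma^{-1}(R/t^n) \otimes_R^L M\bigr) \cong \Sigma^{-1}(R/t^n) \otimes_R^L \RHom_\AAA(N,M) \cong \RHom_R\bigl(R/t^n, \RHom_\AAA(N,M)\bigr),
\]
using Lemma \ref{lemcone} again in the opposite direction. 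Combining the previous two displays,
\[
\RHom_\AAA(N,M) \cong \colim_{n \geq 0} \RHom_R\bigl(R/t^n,\, \RHom_\AAA(N,M)\bigr) = R\Gamma\bigl(\RHom_\AAA(N,M)\bigr),
\]
and one checks this isomorphism is the canonical morphism $\varphi$. Applying Lemma \ref{lemcc}(2) to the $R$-module $\RHom_\AAA(N,M)$ concludes the proof.

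\smallskip

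The only mildly delicate step is the commutation of $\RHom_\AAA(N,-)$ with the filtered colimit defining $R\Gamma$: this is where compactness of $N$ enters. Everything else is a bookkeeping exercise combining Lemma \ref{lembas}, Lemma \ref{lemcone}, and Lemma \ref{lemcc}(2).
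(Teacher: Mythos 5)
Your proof is correct, but it follows a genuinely different route from the paper's. The paper argues by d\'evissage: it first checks the claim for $M = \iota X$ with $X \in D(\AAA_0)$ and $N = A$ representable (where $\RHom_{\AAA}(A,\iota X) = \iota(X(A))$ is visibly torsion), and then extends in $M$ using the fact that $\RHom_{\AAA}(A,-)$ is triangulated and preserves coproducts together with the generation of $D_{tor}(\AAA)$ by the objects $\iota X$ (Proposition \ref{proptor}(2)), and finally extends in $N$ by a similar thick-subcategory argument over the representables. Your argument instead computes directly: writing $M \cong R\Gamma(M) = \colim_n \RHom_R(R/t^n,M)$ via Lemma \ref{lemcc}(2), pulling $\RHom_{\AAA}(N,-)$ through the sequential homotopy colimit by compactness, and identifying each term via Lemma \ref{lembas} and Lemma \ref{lemcone}, you establish the projection-formula-type identity $\RHom_{\AAA}(N,M) \cong R\Gamma(\RHom_{\AAA}(N,M))$. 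This is somewhat stronger and more self-contained: it does not invoke the compact generation of $D_{tor}(\AAA)$, and compactness of $N$ enters only through the (standard, but worth stating) fact that for a compact object the comparison map $\colim_n \RHom_{\AAA}(N,M_n) \to \RHom_{\AAA}(N,\mathrm{hocolim}_n\, M_n)$ is a quasi-isomorphism. One small simplification: your final naturality check that the composite isomorphism agrees with the canonical $\varphi$ is not actually needed, since by Lemma \ref{lemcc}(1) the object $R\Gamma(\RHom_{\AAA}(N,M))$ lies in $D_{tor}(R)$, and $D_{tor}(R)$ is closed under isomorphism, so any isomorphism $\RHom_{\AAA}(N,M) \cong R\Gamma(\RHom_{\AAA}(N,M))$ already suffices to conclude.
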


\begin{proof}
First we take $M = \iota X$ for $X \in D(\AAA_0)$ and $N = A \in {\AAA}$. Then $\RHom_{{\AAA}}({A}, \iota X) = \iota (X(A)) \in D_{tor}(R)$. Since $A$ is compact, $\RHom_{\AAA}(A, -)$ is triangulated and preserves coproducts. Hence, by Proposition \ref{proptor} (2), it follows that $\RHom_{\AAA}(A, M) \in D_{tor}(R)$ for all $M \in D_{tor}(\AAA)$. The claim follows by a similar reasoning for the first argument.  \end{proof}

\begin{definition}\label{deftorsion}
An $R$-linear dg category ${\AAA}$ is called \emph{torsion} if $D_{tor}({\AAA}) = D({\AAA})$.
\end{definition}

Let ${\AAA}_t \subseteq D_{dg}({\AAA})$ be the full dg subcategory spanned by the objects ${A}_t$ for $A \in \AAA$. Put $A' = A \oplus \Sigma^{-1}A \in D(\AAA_0)$ and let $\AAA' \subseteq D_{dg}(\AAA_0)$ be the full dg subcategory spanned by the objects $A'$ for $A \in \AAA$. 

\begin{proposition}\label{propattor}
The $R$-cofibrant dg category ${\AAA}_t$ is torsion and satisfies $k \otimes_R \AAA_t = \AAA'$.
\end{proposition}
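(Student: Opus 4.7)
The plan is to establish the two assertions separately, with the key technical fact being that the scalar morphism $t \cdot 1_{A_t}$ is null-homotopic in the dg category $\AAA_t$.

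For torsion, I would first verify that the degree $-1$ element $h = \begin{pmatrix} 0 & -1_A \\ 0 & 0 \end{pmatrix} \in \AAA_t(A_t, A_t)^{-1}$ satisfies $d(h) = \delta_{A_t} \circ h + h \circ \delta_{A_t} = t \cdot 1_{A_t}$, which is a direct matrix calculation using $\delta_{A_t} = \begin{pmatrix} d_A & 0 \\ -t & -d_A \end{pmatrix}$ and the fact that the $A$-component and $\Sigma^{-1}A$-component contributions to $dh + hd$ each reduce to $-t$ on the diagonal. Since every $M \in \Mod(\AAA_t)$ is $R$-linear, applying $M$ to this identity yields $t \cdot \mathrm{id}_{M(X)} = d(M(h))$ for every $X = A_t$, so multiplication by $t$ on $M(X)$ is the zero morphism in $D(R)$.

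To conclude that $D_{tor}(\AAA_t) = D(\AAA_t)$ it is enough to show $\Kern(R\Gamma) = 0$, after which $D_{tor}(\AAA_t) = {}^\perp\Kern(R\Gamma) = D(\AAA_t)$. So suppose $R\Gamma(M) = 0$. Since $R\Gamma$ is computed pointwise, $R\Gamma(M(X)) = 0$ for every $X \in \AAA_t$, and the corollary to Lemma \ref{lemcone} gives $\cone(t \colon M(X) \to M(X)) = R/t \otimes^L_R M(X) = 0$, meaning $t$ is an \emph{isomorphism} on $M(X)$ in $D(R)$. Combined with the nullhomotopy ``$t = 0$'' established above, this forces $M(X) = 0$ for each $X$, hence $M = 0$ in $D(\AAA_t)$, as required.

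For the identification $k \otimes_R \AAA_t = \AAA'$ the computation is essentially direct. Reducing $\delta_{A_t}$ modulo $t$ kills the off-diagonal entry and yields the block-diagonal differential $\begin{pmatrix} d_A & 0 \\ 0 & -d_A \end{pmatrix}$, so on objects $k \otimes_R A_t = A \oplus \Sigma^{-1}A = A'$ in $\AAA_0 = k \otimes_R \AAA$. For Hom complexes, $R$-cofibrancy of $\AAA$ implies that every matrix entry $\AAA(X,Y)$ appearing in $\AAA_t(A_t, B_t)$ is $R$-flat; hence $k \otimes_R -$ commutes with the total-complex construction, and the reduced differential obtained from $\delta_{A_t}, \delta_{B_t}$ mod $t$ coincides with the differential defining $\AAA_0(A', B') = \AAA'(A', B')$.

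I expect the main place where care is needed is the sign/shift bookkeeping in the homotopy calculation $d(h) = t \cdot 1_{A_t}$, and the observation that $R$-linearity of dg functors is exactly what is needed to transport this nullhomotopy to arbitrary modules. Once those two points are in hand, the ``zero \emph{and} isomorphism'' argument in $D(R)$, together with the pointwise nature of $R\Gamma$, does the rest.
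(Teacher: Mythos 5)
Your proof is correct, but the torsion part takes a genuinely different route from the paper's. The paper argues extrinsically: it embeds $D(\AAA_t)$ into $D(\AAA)$ via the tensor functor induced by the inclusion $\AAA_t \subseteq D_{dg}(\AAA)$, notes that the image lands in $D_{tor}(\AAA)$ (each $A_t \cong \Sigma^{-1}\cone(t)$ is, up to shift, of the form $\iota(-)$, so Proposition \ref{proptor} applies), and then uses the relation $\iota R\sigma \cong \Sigma^{-1}\iota L\tau$ of \S \ref{parcompgen} to deduce that $L\tau M = 0$ forces $R\sigma M = 0$ and hence $M = 0$ by the orthogonality defining $D_{tor}(\AAA)$; implicitly it also uses that this embedding reflects zero objects (the $A_t$ being compact). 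Your argument is intrinsic and more elementary: the explicit homotopy $h = \bigl(\begin{smallmatrix} 0 & -1_A \\ 0 & 0\end{smallmatrix}\bigr)$ with $d(h) = t\cdot 1_{A_t}$ (which does check out against the convention $\delta_{A_t} = \bigl(\begin{smallmatrix} d & 0 \\ -t & -d \end{smallmatrix}\bigr)$, the off-diagonal terms cancelling because $1_A$ is closed) shows that $t$ acts null-homotopically on every representable, hence on $M(X)$ for every dg module $M$; on the other hand $R\Gamma(M) = 0$ forces $t$ to act invertibly on each $M(X)$ in $D(R)$ by the Corollary following Lemma \ref{lemcone}; together these give $M = 0$, so $\Kern(R\Gamma) = 0$ and $D_{tor}(\AAA_t) = {}^{\perp}0 = D(\AAA_t)$. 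What your route buys is self-containedness — no appeal to Proposition \ref{proptor}, to the shift relation \eqref{eqshift}, or to conservativity of $D(\AAA_t) \lra D(\AAA)$ — and it isolates the conceptual reason $\AAA_t$ is torsion, namely that $t$ is already a coboundary in the endomorphism complexes of the generators. What the paper's route buys is that the commutative square relating $L\tau$ on $D(\AAA_t)$ and on $D(\AAA)$ is set up once and reused. The identification $k \otimes_R \AAA_t = \AAA'$ is handled the same way in both: direct reduction of the twisted differentials modulo $t$ (the paper leaves this implicit in its commutative square).
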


\begin{proof}
We have a commutative square
$$\xymatrix{ {D({\AAA}_t)} \ar[r] \ar[d]_{L\tau} & {D({\AAA})} \ar[d]^{L\tau} \\ {D(\AAA')} \ar[r] & {D(\AAA_0)} }$$
in which the horizontal maps are the tensor functors induced by the inclusions of subcategories ${\AAA}_t \subseteq D_{dg}({\AAA})$ and $\AAA' \subseteq D_{dg}(\AAA_0)$.
Now suppose $M \in {D({\AAA}_t)}$ satisfies $L\tau M = 0 \in D(\AAA')$. Then the image of $M$ in $D_{tor}({\AAA}) \subseteq D({\AAA})$ satisfies $L\tau M = 0 \in D(\AAA_0)$ and hence also $R\sigma M = 0$. It follows that $M = 0$. As a consequence, $D_{tor}({\AAA}_t) = D({\AAA}_t)$ as desired.
\end{proof}

\subsection{Torsion Morita equivalences}\label{partormor}
In this section we describe how a derived Morita equivalence can be lifted to a derived torsion Morita equivalence under deformation.
Let ${\AAA}$, ${\BBB}$ be cofibrant $R$-linear dg categories and $M$ a cofibrant $\AAA$-$\BBB$-bimodule (taking values $M(B, A) \in \Mod(R)$). Put $\AAA_0 = k \otimes_R {\AAA}$, $\BBB_0 = k \otimes_R {\BBB}$, $M_0 = k \otimes_R {M}$.

We consider the tensor functors 
$${M}_0 \otimes^L_k -: D({\AAA}_0) \lra D({\BBB}_0): {A} \longmapsto {M_0}(-, {A})$$
and
$${M} \otimes^L_R -: D({\AAA}) \lra D({\BBB}): {A} \longmapsto {M}(-, {A}).$$

\begin{definition}\label{deftormor}
\begin{enumerate}
\item $M_0$ (resp. ${M}$) is a \emph{Morita bimodule} if $M_0 \otimes^L_k -$ (resp. ${M} \otimes^L_R -$) is an equivalence of categories.
\item ${M}$ is a \emph{torsion Morita bimodule} if ${M} \otimes^L_k -$ induces an equivalence of categories
$${M} \otimes^L_R -: D_{tor}({\AAA}) \lra D_{tor}({\BBB}).$$
\end{enumerate}
\end{definition}

The aim of this section is to prove the following:

\begin{proposition}\label{proptorlift}
If $M_0$ is a Morita bimodule, then ${M}$ is a torsion Morita bimodule.
\end{proposition}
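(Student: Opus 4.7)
The strategy is to verify that $F = M \otimes^L_R -$ restricts to a coproduct-preserving triangulated functor $D_{tor}(\AAA) \to D_{tor}(\BBB)$ sending a set of compact generators to a set of compact generators and inducing isomorphisms on all Hom spaces between such generators; the standard equivalence criterion for compactly generated triangulated categories will then finish the proof.

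To see that $F$ preserves torsion, I would use the adjunction $F \dashv G$ with $G = \RHom_{\BBB}(M,-)$ together with the identity $D_{tor} = {}^{\perp}\Kern(R\Gamma)$ to reduce to showing that $G$ maps $\Kern(R\Gamma_{\BBB})$ into $\Kern(R\Gamma_{\AAA})$. For $N \in \Kern(R\Gamma_{\BBB})$, Lemma \ref{lembas} combined with Lemmas \ref{lemcone} and \ref{lemnak} gives
$$\RHom_R(R/t^n, GN) \cong \RHom_{\BBB}(M, \RHom_R(R/t^n, N)) = 0$$
for every $n$, and hence $R\Gamma(GN) = \colim_n \RHom_R(R/t^n, GN) = 0$ as required.

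The crucial ingredient is the base-change identity $F \circ \iota \cong \iota \circ (M_0 \otimes^L_{\AAA_0} -)$ together with its right-adjoint counterpart $G \circ \iota \cong \iota \circ G_0$, where $G_0 = \RHom_{\BBB_0}(M_0,-)$; both follow from $M_0 = k \otimes^L_R M$ by associativity of derived tensor products. By Proposition \ref{propcompgen}(2) the $\iota A$ for $A \in \AAA$ compactly generate $D_{tor}(\AAA)$, and since $M_0 \otimes^L_{\AAA_0} -$ is a Morita equivalence the objects $F(\iota A) \cong \iota(M_0 \otimes^L_{\AAA_0} A_0)$ form the $\iota$-image of a set of compact generators of $D(\BBB_0)$, so they compactly generate $D_{tor}(\BBB)$ by Proposition \ref{propcompgen}(2) once more. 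For full faithfulness on these generators, adjunction and $G\iota \cong \iota G_0$ yield
$$\RHom_{\BBB}(F\iota A, F\iota A') \cong \RHom_{\AAA}(\iota A, \iota G_0(M_0 \otimes^L_{\AAA_0} A'_0)) \cong \RHom_{\AAA}(\iota A, \iota A'),$$
the last isomorphism being $\iota$ applied to the Morita unit $A'_0 \to G_0(M_0 \otimes^L_{\AAA_0} A'_0)$. The composite agrees with the action of $F$ on $\RHom$, since under the adjunction $F \dashv G$ it is $\RHom(\iota A, \eta_{\iota A'})$ for $\eta$ the unit, and $\eta_{\iota A'}$ reduces (via $G \iota \cong \iota G_0$) to the $\iota$-image of the Morita unit. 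Invoking the standard criterion then identifies $F|_{D_{tor}(\AAA)}$ as an equivalence onto $D_{tor}(\BBB)$. I expect the main obstacle to lie in cleanly establishing the base-change identity and its right-adjoint analogue in a way that makes the naturality statements above precise; once these compatibilities are in place the remainder of the argument is essentially formal.
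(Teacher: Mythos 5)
Your argument is correct, and it shares the paper's overall skeleton: both proofs rest on Proposition \ref{propcompgen}(2) to compactly generate $D_{tor}(\AAA)$ and $D_{tor}(\BBB)$ by $\iota$-images of representables (your generators $\iota A$ are, by Lemma \ref{lemcone}, just shifts of the paper's $A_t$), and both reduce the Morita property to the special fibre via $k\otimes_R M=M_0$. Where you genuinely diverge is in the full-faithfulness step. The paper writes down the comparison map $\lambda\colon \AAA(A_t,C_t)\to\Hom_{\BBB}(M(-,A_t),M(-,C_t))$ on the explicit two-term generators, observes that $k\otimes^L_R\lambda$ is the quasi-isomorphism $\lambda'$ supplied by the Morita bimodule $M_0$, and then kills $\cone(\lambda)$ by a derived Nakayama argument: Lemma \ref{lemhomtor} guarantees that source and target of $\lambda$ lie in $D_{tor}(R)$, so $\cone(\lambda)$ is torsion and vanishes because it dies after applying $k\otimes^L_R-$. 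You instead transport the unit isomorphism of the Morita adjunction $F_0\dashv G_0$ along the base-change isomorphisms $F\iota\cong\iota F_0$ and $G\iota\cong\iota G_0$; this dispenses with Lemma \ref{lemhomtor} and with the explicit cone computation, and your preliminary observation that $F$ preserves $D_{tor}$ because the right adjoint $G$ preserves $\Kern(R\Gamma)$ makes explicit something the paper leaves implicit. The price, as you note yourself, is that you must verify the compatibility of the unit of $F\dashv G$ with the two base-change isomorphisms (the standard ``mate'' identity); both base-change identities do hold here because $M$ is $R$-cofibrant, so once that naturality check is written out your proof goes through.
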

\begin{proof}
By Lemma \ref{lemcone} and Proposition \ref{propcompgen}(2), the category $D_{tor}({\AAA})$ is compactly generated by the objects $${A}_t = \Sigma^{-1}\cone(t: {A} \lra {A})$$
in $D({\AAA})$.
The object ${M}(-, {A})$ is cofibrant in $D({\BBB})$ and satisfies $k \otimes_R {M}(-, {A}) = M_0(-, A)$.
On the other hand $${M}(-,{A}_t) = \Sigma^{-1}\cone(t: {M}(-, {A}) \lra {M}(-, {A})) = {M}(-, {A})_t.$$
Since $M_0$ is a Morita bimodule, the objects $M_0(-,A)$ compactly generate $D(\BBB_0)$. Hence, by Proposition \ref{propcompgen} (2), the objects ${M}(-, {A})_t$ compactly generate $D_{tor}({\BBB})$ inside $D({\BBB})$.
It remains to show that the morphisms
$$\lambda: {\AAA}({A}_t, C_t) \lra \Hom_{{\BBB}}({M}(-, {A}_t), {M}(-, C_t))$$
are quasi-isomorphisms for $A, C \in \AAA$. 
Put $A' = A \oplus \Sigma^{-1}A$, $C' = C \oplus \Sigma^{-1}C \in D(\AAA_0)$.
We have $k \otimes_R {A}_t = A'$, $k \otimes_R {C}_t = C'$, $k \otimes_R {M}(-, {A}_t) = M_0(-,A')$, and using lemma \ref{lembas} and adjunction, $k \otimes_R^L \lambda$ is given by the canonical morphism
$$\lambda': \AAA_0(A', C') \lra \Hom_{\BBB_0}(M_0(-, A'), M_0(-, C')).$$
By assumption, $\lambda'$ is a quasi-isomorphism. By Lemma \ref{lemhomtor}, the domain and codomain of $\lambda$ are in $D_{tor}(R)$ and hence the same holds for $\cone(\lambda)$. On the other hand, $k \otimes^L_R \cone(\lambda) = \cone(k \otimes_R^L \lambda) = \cone(\lambda') = 0$. Since $\Kern(R\sigma) = \Kern(\RHom_R(k,-)) =\Kern(k \otimes_R^L -)$, it follows that $\cone(\lambda) = 0$ and $\lambda$ is a quasi-isomorphism as desired.
\end{proof}

\section{The curvature problem}\label{parcurv}

In this section, we solve the curvature problem for formal deformations of small dg categories, that is, the phenomenon that Maurer-Cartan elements in the formal Hochschild complex naturally parameterize \emph{curved} $A_{\infty}$-deformations rather than dg or $A_{\infty}$-deformations. Precisely, we show that for a cohomologically bounded above dg category, these Maurer-Cartan elements up to gauge equivalence are in bijection with Morita deformations up to torsion Morita equivalence. This result is stated in Theorem \ref{thmmain}, and an infinitesimal counterpart is stated in Theorem \ref{thmmaininf}. The formal result is illustrated by the example of the graded field in \S \ref{pargraded}.

\subsection{Transport of $cA_{\infty}$-structures}\label{partransp}Put $R = k[[t]]$.
Let $\AAA_0 = (\AAA_0, \mu^{(0)})$ be a small $k$-linear $A_{\infty}$-category and put ${\AAA} = R \hat{\otimes}_k \AAA_0 = \AAA_0[[t]]$. 
Consider
$$\mathrm{MC}(A_0; k[[t]]) = \{ \phi \in \Sigma\CC(A_0)[[t]]^1 \,\,| \,\, [\mu^{(0)}, \phi t] + \phi t \bullet \phi t = 0 \}/ \sim,$$
the set of elements $\phi$ of shifted degree $1$ for which $\phi t \in \Sigma\CC(A_0)[[t]]$ is a solution of the Maurer Cartan equation up to gauge equivalence of the elements $\phi t$. 
For details on the pronilpotent MC formalism, the reader may wish to consult for instance \cite{yekutieli:MC}.

Let $\tw(\AAA_0)$ be the classical category of (uncurved, finite) twisted complexes over $\AAA_0$. For each $(M, \delta^{(0)}_M) \in \tw(\AAA_0)$, let
$${\delta}_{{M}} = \delta^{(0)}_M + \delta^{(1)}_Mt + \dots + \delta^{(n)}_Mt^n + \dots$$
be a fixed connection which defines an object $({M}, {\delta}_{{M}}) \in \Tw(\AAA)$. We choose $\delta_{{A}} = 0$ for $A \in \AAA_0$. Let $\ovl{\tw(\AAA_0)} \subseteq \Tw(\AAA)$ be the full subquiver consisting of the objects $({M}, \delta_M)$ for $(M, \delta^{(0)}_M) \in \tw(\AAA_0)$. We thus have $\AAA \subseteq \ovl{\tw(\AAA_0)}$.

We have a diagram:
$$\xymatrix{ {\Sigma \CC (\tw(\AAA_0))[[t]]} \ar[d]_{\pi'} \ar[r]^t & {\Sigma \CC (\ovl{\tw(\AAA_0)})} \ar[d]_{\pi} \ar[r] & {\Sigma \CC(\tw(\AAA_0))} \ar[d]_{\pi_0} \\
{\Sigma\CC(\AAA_0)[[t]]} \ar[r]_t & {\Sigma \CC(\AAA)} \ar[r] & {\Sigma \CC(\AAA_0)} }$$

The restriction maps $\pi_0$ and $\pi'$ are induced by the inclusion $\AAA_0 \subseteq \tw(\AAA_0)$ and $\pi$ is induced by $\AAA \subseteq \ovl{\tw(\AAA_0)}$.

The maps $\mathrm{embr}_{\delta^{(0)}}$ and $\mathrm{embr}_{\delta}$ define $B_{\infty}$-sections of $\pi_0$ and $\pi$ respectively (see \cite{lowen9}), making the resulting right hand square commute. We thus obtain an induced $B_{\infty}$-section $s_{\delta}: \Sigma\CC(\AAA_0)[[t]] \lra \Sigma \CC (\tw(\AAA_0)[[t]]$ of $\pi'$. Now $\pi_0$ and $\mathrm{embr}_{\delta^{(0)}}$ are in fact inverse $B_{\infty}$-quasi-isomorphims. As a consequence, 
$$\mathrm{MC}(\pi'): \mathrm{MC}(\tw(\AAA_0); k[[t]]) \lra \mathrm{MC}(\AAA_0; k[[t]])$$
is a bijection with inverse necessarily given by $\mathrm{MC}(s_{\delta})$. Hence, every choice $\delta$ yields a different interpretation of a representative $\phi \in \mathrm{MC}(\Sigma\CC(\AAA_0)[[t]])$ as a $cA_{\infty}$-deformation of $\tw(\AAA_0)$, but all these choices correspond to gauge equivalent solutions of $\mathrm{MC}(\Sigma \CC (\tw(\AAA_0)[[t]])$.

\subsection{The curvature problem and torsion Morita deformations}\label{parmortor}
Put $R = k[[t]]$.
Let $\AAA_0 = (\AAA_0, \mu^{(0)})$ be a small $k$-linear $A_{\infty}$-category and let
$$\mathrm{MC}(A_0; k[[t]]) = \{ \phi \in \Sigma\CC(A_0)[[t]]^1 \,\,| \,\, [\mu^{(0)}, \phi t] + \phi t \bullet \phi t = 0 \}/ \sim$$
be as before.
The natural ``curvature'' projections
$$c_A: \Sigma\CC(\AAA_0) \lra \Sigma\AAA_0(A,A): \xi \longmapsto c_A(\xi)$$
give rise to a curvature projection
$$c: \Sigma\CC(\AAA_0)[[t]] \lra \prod_{A \in \AAA_0}\Sigma\AAA_0(A,A)[[t]]: \phi \longmapsto c(\phi)= (c_A(\phi))_A$$
with $$c_A(\sum_{n = 0}^{\infty} \phi^{(n)} t^n) = \sum_{n = 0}^{\infty} c_A(\phi^{(n)}) t^n.$$  
An element $\sum_{n = 0}^{\infty} \phi^{(n)} t^n \in \mathrm{MC}(\AAA_0; k[[t]])$ corresponds to a $cA_{\infty}$-structure $\mu = \mu^{(0)} + \phi t = \mu^{(0)} + \sum_{n = 0}^{\infty}\phi^{(n)}t^{n+1}$ on ${\AAA} = R \hat{\otimes}_k \AAA_0$, and gauge equivalence of solutions corresponds to isomorphism of curved deformations.
If $\phi$ is such that $c(\phi) = 0$, then the corresponding $cA_{\infty}$-structure $\mu = \mu^{(0)} + \phi t$ is in fact an $A_{\infty}$-structure on $\AAA$. 

The curvature projection $c$ is a morphism of dg Lie algebras for the abelian dg Lie structure on the codomain. The kernel of $c$ is given by the shifted truncated Hochschild complex $\Sigma\CC_{\mathrm{tr}}(\AAA_0)[[t]]$ which controlls $A_{\infty}$-deformations of $\AAA_0$. 

Although it may be tempting to work with the truncated Hochschild complex from the point of view of deformations, this is not a good idea from the point of view of invariance properties. Indeed, in contrast with the full Hochschild complex, the truncated Hochschild complex is not invariant under Morita equivalence. In fact, we may use this observation in order to try to eliminate curvature in the following way. Suppose we are given an element $\phi \in \mathrm{MC}(\AAA_0; k[[t]])$ with $c(\phi) \neq 0$, then we may look for a Morita equivalent category $\BBB_0$ such that the corresponding element $\psi \in \mathrm{MC}(\BBB_0; k[[t]])$ satisfies $c(\psi) = 0$. The remainder of this section is devoted to making this idea precise.

The following construction is crucial. Consider $\phi \in \mathrm{MC}(\AAA_0; k[[t]])$ with corresponding $cA_{\infty}$-structure $\mu = \mu^{(0)} + \phi t$ on $\AAA$. For every $A \in \AAA_0$, we consider the element $c_A(\phi^{(0)}) \in \Sigma\AAA_0(A,A)^1$ and the twisted object $A' \in \tw(\AAA_0)$ given by $A' = A \oplus \Sigma^{-1} A$ with 
$$\delta_{A'} = \begin{pmatrix} 0 & c_A(\phi^{(0)}) \\ 0 & 0 \end{pmatrix},$$
which is $\cone(c_A(\phi^{(0)})) \in \tw(\AAA_0)$.
We also consider the element $c_A(\phi) \in \Sigma\AAA_0(A,A)[[t]]^1$ and the twisted object $A_t \in \tw(\AAA)$ given by $A_t = A \oplus \Sigma^{-1} A$ with
$$\delta_{A_t} = \begin{pmatrix} 0 & c_A(\phi) \\ - t & 0 \end{pmatrix}.$$
Let ${\AAA}_t \subseteq \tw({\AAA})$ be the full dg subcategory spanned by the objects ${A}_t$ for $A \in \AAA$ and let $\AAA' \subseteq \tw(\AAA_0)$ be the full subcategory spanned by the objects $A'$ for $A \in \AAA$. 

We obtain a morphism of dg Lie algebras
$$\mathrm{embr}_{\delta'}: \Sigma \CC(\AAA_0)[[t]] \lra \Sigma \CC(\tw(\AAA_0)[[t]] \lra \Sigma \CC(\AAA')[[t]].$$
According to \S \ref{partransp}, the image $\mathrm{embr}_{\delta'}(\phi) \in MC(\AAA'; k[[t]])$ corresponds to the $A_{\infty}$-deformation $\AAA_t$ of $\AAA'$.

\begin{lemma}\label{lemnilp}
If $c_A(\phi^{(0)}) \in \Sigma\AAA_0(A,A)^1$ is nilpotent in $H^{\ast}\Sigma\AAA_0(A,A)$, there is a canonical Morita equivalence $\AAA_0 \cong \AAA'$ with corresponding $B_{\infty}$-quasi-isomorphism $\mathrm{embr}_{\delta'}$ and bijection $\mathrm{MC}(\mathrm{embr}_{\delta'}): \mathrm{MC}(\AAA_0; k[[t]]) \cong \mathrm{MC}(\AAA'; k[[t]])$. Under this bijection, the $cA_{\infty}$-deformation $\AAA$ of $\AAA_0$ corresponds to the $A_{\infty}$-deformation $\AAA_t$ of $\AAA'$.
\end{lemma}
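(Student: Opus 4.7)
The plan is to derive the lemma in three steps: first, establish the derived Morita equivalence $\AAA_0 \simeq \AAA'$ inside $D(\AAA_0)$ using the nilpotency hypothesis; second, transfer this to a $B_\infty$-quasi-isomorphism on shifted Hochschild complexes via the diagram of \S\ref{partransp}; third, identify the corresponding formal deformation using Proposition \ref{propformdef}.

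For the first step, set $c := c_A(\phi^{(0)})$ and observe that the Maurer--Cartan equation $[\mu^{(0)}, \phi t] + \tfrac{1}{2}[\phi t, \phi t] = 0$, collected in the coefficient of $t$, yields $[\mu^{(0)}, \phi^{(0)}] = 0$; projecting to the $0$-th Hochschild component then gives $\mu^{(0)}_1(c) = 0$, so $c$ defines a class $[c] \in H^{\ast}\Sigma\AAA_0(A,A)$. By hypothesis $[c]^N = 0$ for some $N \geq 1$, so the composition $c^N$ is null-homotopic in $D(\AAA_0)$. Iterating the octahedral axiom along the decompositions $c^n = c \circ c^{n-1}$ exhibits $\cone(c^N)$ as a finite iterated extension of shifts of $A' = \cone(c)$, hence $\cone(c^N) \in \langle A'\rangle_{\mathrm{thick}}$. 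On the other hand, null-homotopy of $c^N$ gives $\cone(c^N) \cong \Sigma A \oplus \Sigma^N A$ in $D(\AAA_0)$, so $A$ is a direct summand of an object of $\langle A'\rangle_{\mathrm{thick}}$ and therefore lies in it. Since trivially $A' \in \per(\AAA_0)$, we conclude $\langle \AAA'\rangle_{\mathrm{thick}} = \langle \AAA_0\rangle_{\mathrm{thick}} = \per(\AAA_0)$, i.e.\ the inclusions $\AAA_0, \AAA' \hookrightarrow \tw(\AAA_0)$ give a canonical Morita equivalence $\AAA_0 \simeq \AAA'$.

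For the remaining steps, by Keller's theorem \cite{keller} the Morita equivalence induces $B_\infty$-quasi-isomorphisms on shifted Hochschild complexes. Concretely, the restriction $\pi_0 : \Sigma\CC(\tw(\AAA_0)) \to \Sigma\CC(\AAA_0)$ and the further restriction to $\Sigma\CC(\AAA')$ are both $B_\infty$-quasi-isomorphisms, and hence so is $\mathrm{embr}_{\delta^{(0)}}$, which is a $B_\infty$-section of $\pi_0$. The composite $\mathrm{embr}_{\delta'}|_{t = 0} : \Sigma\CC(\AAA_0) \to \Sigma\CC(\AAA')$ is then a $B_\infty$-quasi-isomorphism; since source and target are $t$-adically complete and the map respects the $t$-adic filtration, an associated graded argument extends this to the $k[[t]]$-linear $B_\infty$-quasi-isomorphism $\mathrm{embr}_{\delta'}$. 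The induced bijection $\mathrm{MC}(\mathrm{embr}_{\delta'})$ on gauge classes is then standard pronilpotent Maurer--Cartan formalism. Finally, by construction in \S\ref{partransp} the image $\mathrm{embr}_{\delta'}(\phi)$ corresponds to the restriction of $\embr_\delta(\mu)$ to the subquiver $\AAA'[[t]] \subseteq \Tw(\AAA)$ equipped with the connections $\delta_{A_t}$, which is by definition $\AAA_t$; Proposition \ref{propformdef} ensures that each $A_t$ is uncurved, so the deformation is a genuine $A_\infty$-deformation of $\AAA'$. The principal obstacle is the nilpotency argument in the first step; the transport through \S\ref{partransp} is essentially bookkeeping once the Morita equivalence is in hand.
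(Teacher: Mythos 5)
Your proof is correct and follows essentially the same route as the paper: the paper's entire proof is the one-line appeal to the octahedral axiom (citing \cite[Proposition 3.16]{kellerlowen}) to get $\mathrm{thick}(\AAA')=\mathrm{thick}(\AAA_0)$ from nilpotency of $c_A(\phi^{(0)})$, which is exactly the iterated-cone argument you spell out, with the $B_{\infty}$/Maurer--Cartan transport and the identification of the image with $\AAA_t$ already supplied by \S\ref{partransp} and Proposition \ref{propformdef} as you use them.
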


\begin{proof}
Since the elements $c_A(\phi^{(0)})$ are nilpotent in the derived category of $\AAA_0$, $\AAA'$ is canonically Morita equivalent to $\AAA_0$ by the octahedral axiom (see \cite[Proposition 3.16]{kellerlowen}).
\end{proof}

Now that we have found a way to replace curved deformations by uncurved deformations (under the nilpotency hypothesis on $c_A(\phi^{(0)})$), we should investigate to which extent such a replacement is well defined. In order to arrive at the statement in Theorem \ref{thmmain}, we make use of torsion Morita deformations. 
For technical convenience, in the sequel we suppose $\AAA_0$ is a dg category.

\begin{definition}
Let $\AAA_0$ be a small $k$-linear dg category.
\begin{enumerate}
\item  A \emph{(torsion) Morita $R$-deformation} of $\AAA_0$ is given by a $k$-linear dg category $\BBB_0$, a Morita $\BBB_0$-$\AAA_0$-bimodule $M_0$, and an $R$-cofibrant (torsion - in the sense of Definition \ref{deftorsion}) dg category $\BBB$ with $\BBB_0 = k \otimes_R \BBB$.
\item Let $\BBB$ with $\BBB_0$-$\AAA_0$-bimodule $M_0$ and $\CCC$ with $\CCC_0$-$\AAA_0$-bimodule $N_0$ be two Morita $R$ deformations of $\AAA_0$. 
A \emph{(torsion) Morita equivalence of deformations} between $\BBB$ and $\CCC$ is given by an $R$-cofibrant $\BBB$-$\CCC$ (torsion) Morita bimodule $M$ with $k \otimes_R M = N^{-1}_0 \otimes_{\AAA_0} M_0$.
\end{enumerate}
\end{definition}

Let $\Def_{\mathrm{tMor}}(\AAA_0;R)$ be the set of Morita deformations of $\AAA_0$ up to torsion Morita equivalence of deformations and let $\Def_{\mathrm{ttMor}}(\AAA_0;R)$ be the set of torsion Morita deformations of $\AAA_0$ up to Morita equivalence of deformations. Note that for torsion Morita deformations, a torsion Morita equivalence of deformations is the same thing as a Morita  equivalence of deformations, so we have a natural injection
$$\Theta_0: \Def_{\mathrm{ttMor}}(\AAA_0;R) \lra \Def_{\mathrm{tMor}}(\AAA_0;R).$$

\begin{lemma}
The map $\Theta_0$ is a bijection.
\end{lemma}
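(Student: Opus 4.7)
The plan is to prove well-definedness together with injectivity in one stroke, and then to prove surjectivity via a canonical construction.

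For well-definedness, the key observation is that any $R$-cofibrant Morita bimodule $M$ between $\BBB$ and $\CCC$ is automatically a torsion Morita bimodule, since $X \in D_{tor}$ is characterised by $k \otimes^L_R X = 0$ and, by Lemma \ref{lembas} (using that $k$ is perfect over $R = k[[t]]$), this vanishing is preserved under $M \otimes^L_R -$. Hence Morita equivalence of deformations is in particular torsion Morita equivalence, and $\Theta_0$ is well defined. For injectivity, if $\BBB, \CCC$ are torsion Morita deformations related by a torsion Morita bimodule $M$, then $M \otimes^L_R -$ restricts to an equivalence $D_{tor}(\CCC) \to D_{tor}(\BBB)$; since $D_{tor} = D$ on both sides by Definition \ref{deftorsion}, $M$ is in fact a Morita bimodule, so $\BBB$ and $\CCC$ are already Morita equivalent as deformations.

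For surjectivity, given a Morita deformation $(\BBB_0, M_0, \BBB)$ of $\AAA_0$, I would apply Proposition \ref{propattor} to $\BBB$ to obtain the $R$-cofibrant torsion dg category $\BBB_t \subseteq D_{dg}(\BBB)$ spanned by the objects $B_t$, whose reduction $\BBB' = k \otimes_R \BBB_t$ is the full subcategory of $D_{dg}(\BBB_0)$ spanned by the objects $B' = B \oplus \Sigma^{-1}B$. The inclusion $\BBB' \hookrightarrow D_{dg}(\BBB_0)$ is itself a Morita equivalence, since the $B'$ compactly generate $D(\BBB_0)$ as sums of shifts of representables; let $J_0$ be the corresponding Morita $\BBB'$-$\BBB_0$-bimodule. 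Setting $N_0 = J_0 \otimes_{\BBB_0} M_0$, the triple $(\BBB', N_0, \BBB_t)$ is a torsion Morita deformation of $\AAA_0$, and the goal reduces to exhibiting a torsion Morita equivalence of deformations between $(\BBB_0, M_0, \BBB)$ and $(\BBB', N_0, \BBB_t)$.

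To produce such an equivalence, consider the $\BBB$-$\BBB_t$-bimodule $P$ given by $P(B, B_t) = \Hom_{D_{dg}(\BBB)}(B_t, B)$, i.e.\ the bimodule arising from the two inclusions $\BBB, \BBB_t \hookrightarrow D_{dg}(\BBB)$. By Proposition \ref{propcompgen} the objects $B_t$ compactly generate $D_{tor}(\BBB)$, so $P \otimes^L_R -$ induces an equivalence $D(\BBB_t) \xrightarrow{\sim} D_{tor}(\BBB)$ and $P$ is a torsion Morita bimodule. The step I expect to be the main obstacle is identifying $k \otimes_R P$ with $N_0^{-1} \otimes_{\AAA_0} M_0$ canonically as $\BBB_0$-$\BBB'$-bimodules. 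Using Lemma \ref{lembas} together with $R$-cofibrancy of $\BBB$ and $\BBB_t$, the reduction $k \otimes_R P$ computes to the inclusion bimodule of $\BBB'$ in $D_{dg}(\BBB_0)$, which represents the realization functor $J_0^{-1} \colon D(\BBB') \to D(\BBB_0)$; conversely, $N_0^{-1} \otimes_{\AAA_0} M_0$ collapses at the level of functors to $M_0 \circ M_0^{-1} \circ J_0^{-1} = J_0^{-1}$. The remaining technical work is bookkeeping of bimodule conventions (left/right structure, composition order) needed to promote this agreement of functors to a canonical isomorphism of bimodules, completing the verification that $P$ realises a torsion Morita equivalence of deformations.
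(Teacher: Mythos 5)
Your overall route is the paper's own: the paper's entire proof is the citation of Proposition \ref{propattor}, and your surjectivity argument is precisely the intended unwinding of that citation (replace $\BBB$ by the torsion dg category $\BBB_t$, observe that $k\otimes_R\BBB_t=\BBB'$ is Morita equivalent to $\BBB_0$ via its inclusion into $D_{dg}(\BBB_0)$, and connect $\BBB$ to $\BBB_t$ by the hom-bimodule of the two inclusions into $D_{dg}(\BBB)$, which is a torsion Morita bimodule by Proposition \ref{propcompgen}). Your injectivity argument likewise coincides with the paper's remark that on torsion deformations the two equivalence relations agree. The identification $k\otimes_R P\cong N_0^{-1}\otimes_{\AAA_0}M_0$ that you defer to bookkeeping is indeed routine and is left implicit by the paper as well.

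There is, however, one genuine error in your well-definedness step: you assert that $X\in D_{tor}(\BBB)$ is characterised by $k\otimes^L_R X=0$. That condition characterises $\Kern(R\Gamma)$, i.e.\ the \emph{derived torsionfree} objects (the Corollary following Lemma \ref{lemcone}), whereas $D_{tor}(\BBB)$ is by definition the \emph{left orthogonal} ${}^{\perp}\Kern(R\Gamma)$ of that class. Indeed, $\iota X$ for $0\neq X\in D(\BBB_0)$ lies in $D_{tor}(\BBB)$ by Proposition \ref{proptor}, while Lemma \ref{lemcone} gives $k\otimes^L_R\iota X\cong \iota X\oplus\Sigma\,\iota X\neq 0$. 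As written, your justification that a Morita bimodule is automatically a torsion Morita bimodule therefore does not parse. The repair is short and uses exactly the ingredient you invoke: since $M\otimes^L_R-$ is an $R$-linear equivalence commuting with $k\otimes^L_R-$ (Lemma \ref{lembas}), it maps $\Kern(k\otimes^L_R-)=\Kern(R\Gamma)$ isomorphically onto the corresponding kernel on the other side, and hence maps the left orthogonal $D_{tor}(\CCC)$ onto $D_{tor}(\BBB)$; thus a Morita equivalence of deformations is in particular a torsion Morita equivalence, and $\Theta_0$ is well defined. With this correction the proposal is sound.
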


\begin{proof}
This follows from Proposition \ref{propattor}.
\end{proof}

Next we define a map
$$\Theta: \Def_{\mathrm{tMor}}(\AAA_0; k[[t]]) \lra \mathrm{MC}(\AAA_0; k[[t]])$$
in the following way. Consider a Morita $R$-deformation $\BBB$ with $\BBB_0$-$\AAA_0$-bimodule $M_0$. The dg structure on $\BBB$ naturally defines an element $\phi_{\BBB} \in \mathrm{MC}(\BBB_0; k[[t]])$. The Morita bimodule $M_0$ induces a canonical isomorphism
$$\Phi_{M_0}: \mathrm{MC}(\BBB_0; k[[t]]) \lra \mathrm{MC}(\AAA_0; k[[t]]).$$
We define
$$\Theta(\BBB, M_0) = \Phi_{M_0}(\phi_{\BBB}).$$

\begin{theorem} \label{thmmain}
\begin{enumerate}
\item We obtain a well defined injection
$$\Theta: \Def_{\mathrm{tMor}}(\AAA_0; k[[t]]) \lra \mathrm{MC}(\AAA_0; k[[t]]).$$
\item If $\phi = \Sigma_{n \geq 0}\phi^{(n)}t^{n} \in \mathrm{MC}(\AAA_0; k[[t]])$ is such that $c_A(\phi^{(0)}) \in \Sigma\AAA_0(A,A)^1$ is nilpotent in $H^{\ast}\Sigma\AAA_0(A,A)$ for every $A \in \AAA$, then $\phi$ is in the image of $\Theta$.
\item If $H^{2}\Sigma\AAA_0(A,A)$ is nilpotent in $H^{\ast}\Sigma\AAA_0(A,A)$ for all $A \in \AAA$, then $\Theta$ is bijective. In particular, this is the case if $\AAA$ has bounded above cohomology.
\end{enumerate}
\end{theorem}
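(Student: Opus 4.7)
The plan is to establish the three parts in order, with the main conceptual work concentrated in (2).

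For (1), I would first address well-definedness: given a torsion Morita equivalence of deformations $P \colon \BBB \leftrightarrow \CCC$ (with $P_0 = k \otimes_R P$), an extension of Keller's $B_\infty$-quasi-isomorphism $\Gamma_{P_0}$ to the deformed setting (using the torsion derived categories developed in \S\ref{parpartor}) shows that the Morita transports $\Phi_{M_0}$ and $\Phi_{N_0}$ produce the same $\mathrm{MC}$ class. For injectivity, I would take $(\BBB, M_0)$ and $(\CCC, N_0)$ with the same image $\phi$, form the Morita $\BBB_0$-$\CCC_0$-bimodule $P_0 = M_0 \otimes^L_{\AAA_0} N_0^{-1}$ (under which $\phi_{\BBB}$ and $\phi_{\CCC}$ automatically correspond), lift $P_0$ to an $R$-cofibrant $\BBB$-$\CCC$ bimodule $P$ whose deformation obstructions vanish because of this correspondence, and invoke Proposition \ref{proptorlift} to conclude that $P$ is automatically a torsion Morita bimodule.

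Part (2) is the heart of the theorem. Given $\phi$ with $c_A(\phi^{(0)})$ nilpotent in $H^{\ast}\Sigma\AAA_0(A,A)$ for every $A$, I form the $cA_{\infty}$-algebra $(\AAA = \AAA_0[[t]], \mu = \mu^{(0)} + \phi t)$ and exploit the factorisation $c_A(\mu) = t \cdot c_A(\phi)$, with $t$ regular in $R = k[[t]]$. Proposition \ref{key} then provides the uncurved two-sided cone $A_t = \cone(c_A(\phi), -t) \in \Tw(\AAA)$, and assembling these into the full dg subcategory $\AAA_t \subseteq \Tw(\AAA)$ yields, by Proposition \ref{propattor}, an $R$-cofibrant torsion dg category with $k \otimes_R \AAA_t = \AAA'$, where $\AAA' \subseteq \tw(\AAA_0)$ is spanned by the classical cones $A' = \cone(c_A(\phi^{(0)}))$. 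The nilpotency hypothesis, via the octahedral-axiom argument of Lemma \ref{lemnilp}, produces a canonical Morita bimodule $M_0$ identifying $\AAA'$ with $\AAA_0$, so $(\AAA_t, M_0)$ is a genuine torsion Morita deformation of $\AAA_0$. The transport analysis of \S\ref{partransp} then identifies $\embr_{\delta'}(\phi)$ with the $A_{\infty}$-structure on $\AAA_t$ viewed as deformation of $\AAA'$, giving $\Theta(\AAA_t, M_0) = \phi$.

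For (3), I would reduce to (2) by verifying that each $c_A(\phi^{(0)})$ is nilpotent under either hypothesis. Under the assumption that $H^2\Sigma\AAA_0(A,A)$ is nilpotent in $H^\ast\Sigma\AAA_0(A,A)$ there is nothing to check; if instead $\AAA_0$ is cohomologically bounded above with $H^n\AAA_0(A,A) = 0$ for $n > N$, then any $\xi \in H^2(\AAA_0(A,A))$ satisfies $\xi^k \in H^{2k}(\AAA_0(A,A))$ under the Yoneda product, which vanishes once $2k > N$, so $c_A(\phi^{(0)})$ is nilpotent. The main obstacle I anticipate is in fact the injectivity part of (1): one must make the obstruction theory for lifting $P_0$ to an $R$-cofibrant bimodule precise, and verify well-definedness of the resulting torsion Morita equivalence class. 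Conceptually, however, the most delicate ingredient is already packaged in Proposition \ref{key}: the vanishing of the curvature of $A_t$ uses the regularity of $t$ in $k[[t]]$ in an essential way, which is exactly the feature whose failure in the infinitesimal setting (cf.\ Example \ref{example}) necessitates the refined statement of Theorem \ref{thmmaininf}.
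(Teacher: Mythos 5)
Your proposal follows essentially the same route as the paper: part (2) via the uncurved two-sided cone $A_t = \cone(c_A(\phi), -t)$ of Proposition \ref{key} together with Proposition \ref{propattor} and Lemma \ref{lemnilp}, part (1) via lifting the comparison bimodule and invoking Proposition \ref{proptorlift}, and part (3) by reduction to (1) and (2). The one step you leave schematic --- producing the deformed $R$-cofibrant bimodule in the injectivity argument --- is handled in the paper by passing to the arrow category $\CCC_0 = (\BBB_0 \rightarrow_{M_0} \AAA_0)$, whose Hochschild complex maps $B_{\infty}$-quasi-isomorphically to those of $\AAA_0$ and $\BBB_0$, so that a common Maurer--Cartan lift $\phi_{\CCC}$ exists and its off-diagonal part is exactly the required bimodule.
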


\begin{proof}
(1) The question reduces to the following setup. Consider $R$-cofibrant dg categories $\AAA$ and $\BBB$ with a Morita $\AAA_0$-$\BBB_0$-bimodule $M_0$. Let $\phi_{\AAA} \in \mathrm{MC}(\AAA_0;[[t]])$ and $\phi_{\BBB} \in \mathrm{MC}(\BBB_0;[[t]])$ be the corresponding Maurer Cartan elements. Let $\CCC_0 = (\BBB_0 \rightarrow_{M_0} \AAA)$ be the arrow category. We have quasi-isomorphisms of $B_{\infty}$-algebras $\pi_{\AAA}: \Sigma \CC(\CCC_0) \lra \Sigma \CC(\AAA_0)$ and $\pi_{\BBB}: \Sigma \CC(\CCC_0) \lra \CC(\BBB_0)$ such that $\alpha: \mathrm{MC}(\AAA_0; k[[t]]) \cong \mathrm{MC}(\BBB_0; k[[t]])$ is induced by $\pi_{\BBB} \pi_{\AAA}^{-1}$. 

Suppose first that $\AAA$ and $\BBB$ are torsion Morita equivalent through an $R$-cofibrant $\AAA$-$\BBB$-bimodule $M$ with $k \otimes_R M = M_0$. Then the arrow category $\CCC = (\BBB \rightarrow_{M} \AAA)$ is an $R$-cofibrant dg category with $k \otimes_R \CCC = \CCC_0$ and determines an element $\phi_{\CCC} \in \mathrm{MC}(\CCC_0; k[[t]])$ with $\pi_{\AAA}(\phi_{\CCC}) = \phi_{\AAA}$ and $\pi_{\BBB}(\phi_{\CCC}) = \phi_{\BBB}$. It follows that $\phi_{\AAA}$ and $\phi_{\BBB}$ correspond through $\alpha$. This proves that $\Theta$ is well defined.

Suppose next that $\phi_{\AAA}$ and $\phi_{\BBB}$ correspond through $\alpha$. Then there exists $\phi_{\CCC} \in \mathrm{MC}(\CCC_0; k[[t]])$ with $\pi_{\AAA}(\phi_{\CCC}) = \phi_{\AAA}$ and $\pi_{\BBB}(\phi_{\CCC}) = \phi_{\BBB}$. Let $\CCC$ be the corresponding $cA_{\infty}$-deformation of $\CCC_0$. Then $\Ob(\CCC) = \Ob(\AAA) \coprod \Ob(\BBB)$ and the $cA_{\infty}$ structure reduces to the given dg structures on $\AAA$ and $\BBB$. In particular, it is an $A_{\infty}$-structure and we obtain an $\AAA$-$\BBB$ $A_{\infty}$-bimodule $M$ with $M(B,A) = \CCC(B,A)$ for $A \in \AAA$ and $B \in \BBB$. Without loss of generality, we may suppose that $M$ is an $R$-cofibrant $\AAA$-$\BBB$ dg bimodule with $k \otimes_R M = M_0$. By Proposition \ref{proptorlift}, $M$ is a torsion Morita bimodule. This proves that $\Theta$ is injective.

(2) Let $\phi$ be as in described under (2). Let $\psi = \mathrm{MC}(\mathrm{embr}_{\delta'})(\phi)$ be as in Lemma \ref{lemnilp}, with associated $A_{\infty}$-deformation $\AAA_t$ of $\AAA'$. Replacing  $\AAA_t$ with a quasi-equivalent $R$-cofibrant dg category $\tilde{\AAA}_t$, we arrive at a Morita deformation which gets mapped to $\phi$ under $\Theta$ by Lemma \ref{lemnilp}.

(3) is immediate from (1) and (2).
\end{proof}

\begin{example}
Let $X$ be a quasi-compact quasi-separated scheme. It was shown in \cite{bondalvandenbergh} that the derived category $D_{\mathrm{Qch}}(X)$ is compactly generated by a single perfect complex $M$, and thus equivalent to the derived category $D(A_0)$ of $A_0 = \RHom(M,M)$. The dg algebra $A_0$ is cohomologically bounded above, whence Theorem \ref{thmmain}(2) applies.
\end{example}

We end this section with a brief discussion of the situation for infinitesimal deformations. Roughly speaking, we have that $n+1$-th order curved deformations give rise to $n$-th order uncurved Morita deformations. Let $(\AAA_0, \mu^{(0)})$ be a small $k$-linear dg category as before For $\phi = \sum_{k = 0}^{n-1} \phi^{(k)}t^k \in \Sigma \CC(\AAA_0)[t]/t^n$, we consider $\phi t = \sum_{k = 0}^{n-1} \phi^{(k)}t^{k+1} \in \Sigma \CC(\AAA_0)[t]/t^{n+1}$. Put 
$$\mathrm{MC}(\AAA_0; k[t]/t^{n+1}) = \{ \phi \in (\Sigma \CC(\AAA_0)[t]/t^n)^1 \,\,| \,\, [\mu^{(0)}, \phi t] + \phi t \bullet \phi t = 0 \}/ \sim,$$
the set of elements $\phi$ for which $\phi t \in \Sigma \CC(\AAA_0)[t]/t^{n+1}$ is a solution of the Maurer Cartan equation up to gauge equivalence of the elements $\phi t$. Let $\Def_{\mathrm{Mor}}(\AAA_0; k[t]/t^{n+1})$ be the set of $n$-th order Morita deformations of $\AAA_0$ up to Morita equivalence.
We obtain a map
\begin{equation}
\Theta_n: \Def_{\mathrm{Mor}}(\AAA_0; k[t]/t^{n+1}) \lra \mathrm{MC}(\AAA_0; k[t]/t^{n+1}).
\end{equation}

\begin{theorem}\label{thmmaininf}
Suppose $\AAA_0$ is cohomologically bounded above.
\begin{enumerate}
\item The map $\Theta_n$ is injective.
\item If $\phi  \in \mathrm{MC}(\Sigma \CC(\AAA_0)[t]/t^{n+1})$ is in the image of $\mathrm{MC}(\Sigma \CC(\AAA_0)[t]/t^{n+2})$, then $\phi$ is in the image of $\Theta_n$.
\end{enumerate}
\end{theorem}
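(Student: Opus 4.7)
For part (1), the plan is to mimic the arrow-category argument used in the proof of Theorem \ref{thmmain}(1). Given two Morita $R$-deformations of $\AAA_0$ with the same image under $\Theta_n$, one reduces, after composing with the Morita bimodules, to the situation of two dg deformations $\AAA$, $\BBB$ of $\AAA_0$ linked by a Morita $\AAA_0$-$\BBB_0$-bimodule $M_0$, whose associated MC elements coincide. Matching of these elements along the $B_\infty$-quasi-isomorphisms $\pi_\AAA$ and $\pi_\BBB$ from the arrow category $\CCC_0 = (\BBB_0 \to_{M_0} \AAA_0)$ produces a lift $\phi_\CCC$, and hence a $cA_\infty$-deformation $\CCC$ restricting to the dg structures on $\AAA$ and $\BBB$ and yielding an $R$-deformation $M$ of $M_0$. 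The crucial point is that $M$ is actually a Morita $\AAA$-$\BBB$-bimodule (not merely a torsion Morita bimodule as in Proposition \ref{proptorlift}): in the infinitesimal case this follows from the same compact-generator argument, because derived Nakayama is unconditional over $R = k[t]/t^{n+1}$, so $\Kern(k \otimes^L_R -)$ is trivial on $D(\CCC)$ and no torsion truncation is required.

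For part (2), the main technical input is Proposition \ref{propinfdef}. Let $\bar\phi \in \mathrm{MC}(\AAA_0; k[t]/t^{n+2})$ be the given extension of $\phi$, with corresponding $cA_\infty$-structure $\bar\mu$ on $\bar\AAA = \AAA_0[t]/t^{n+2}$. Applied to each object $A \in \AAA_0$ (endowed with the zero connection), Proposition \ref{propinfdef} produces an uncurved twisted object $A_{(n)} \in \Tw(\AAA)$ for $\AAA = \AAA_0[t]/t^{n+1}$, whose reduction is the uncurved object $\cone(c_A(\phi^{(0)})) \in \Tw(\AAA_0)$. Let $\BBB \subseteq \Tw(\AAA)_{\infty}$ be the full $A_\infty$-subcategory on the $A_{(n)}$, so that $\BBB$ is an $A_\infty$-deformation of the full subcategory $\BBB_0 \subseteq \Tw(\AAA_0)$ on the cones $\cone(c_A(\phi^{(0)}))$. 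Cohomological boundedness of $\AAA_0$ ensures that each class $c_A(\phi^{(0)}) \in H^2\AAA_0(A,A)$ is nilpotent, since sufficiently high iterates land in cohomological degrees beyond the bound; by the octahedral argument of \cite[Proposition 3.16]{kellerlowen} the natural embedding $\AAA_0 \to \BBB_0$ is then a Morita equivalence through a canonical bimodule $M_0$. After rectifying $\BBB$ to a quasi-equivalent $R$-cofibrant dg deformation of $\BBB_0$, the pair $(\BBB, M_0)$ is the desired Morita deformation; tracing through the $B_\infty$-machinery of \S\ref{partransp} shows that its associated MC element is exactly $\phi$.

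The main obstacle I expect to encounter is the rectification step in part (2): Proposition \ref{propinfdef} naturally produces a full $A_\infty$-subcategory of $\Tw(\AAA)$, whereas $\Theta_n$ is defined on honest dg Morita deformations. Replacing $\BBB$ by a quasi-equivalent $R$-cofibrant dg category requires invoking a suitable model structure on $R$-linear dg categories, and one must then track this rectification through the identification of the associated MC element with $\phi$. A secondary point worth emphasizing is the indispensable role of the $(n+1)$-th order extension: it is precisely what provides the ``$\tfrac{c_{\bar M}}{t}$'' term used in Proposition \ref{propinfdef} to kill the curvature at each level simultaneously, and Example \ref{example} shows that the statement genuinely fails without assuming such an extension exists.
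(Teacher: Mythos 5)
Your proposal is correct and matches the paper's intended argument: the paper's own proof is literally the one line ``this is a variant of the proof of Theorem \ref{thmmain}, this time based upon Proposition \ref{propinfdef}'', and you have reconstructed exactly that variant — the arrow-category injectivity argument with torsion Morita replaced by ordinary Morita (justified by unconditional derived Nakayama over $k[t]/t^{n+1}$), and surjectivity via Proposition \ref{propinfdef} plus the nilpotency/octahedron argument of Lemma \ref{lemnilp}. The rectification step you flag is handled identically in the formal case (replacing the $A_{\infty}$-category by a quasi-equivalent $R$-cofibrant dg model), so it poses no additional difficulty here.
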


\begin{proof}
This is a variant of the proof of Theorem \ref{thmmain}, this time based upon Proposition \ref{propinfdef}.
\end{proof}

\subsection{The graded field}\label{pargraded}
Let $k$ be a field and let $A_0 = k[x, x^{-1}]$ be the graded field with $\deg(x) = 2$ (see \cite{kellerlowen}). Put $A= A_0[[t]]$. For every $n \in \N$, the element $$\phi_{(n)} = xt^n \in \Sigma A_0[[t]]^1 \subseteq \Sigma\CC^1(A_0)[[t]]$$
is such that $\phi_{(n)} t = xt^{n+1}$ is a solution of the Maurer-Cartan equation since $A$ is super-commutative. We obviously have 
$$c(\phi_{(n)}^{(0)}) = \phi_{(n)}^{(0)} = \begin{cases} x \,\, &\text{if} \,\, n = 0; \\ 0 \,\, & \text{otherwise.} \end{cases}$$
For $n \neq 0$, $c(\phi_{(n)}^{(0)})$ is thus nilpotent in $H^*\Sigma A_0 = \Sigma A_0$ and Theorem \ref{thmmain}(2) applies. The Morita deformation of $A_0$ corresponding to $\phi_{(n)}$ is obtained as follows. The object $A' \in \tw(A_0)$ is given by $A' = A \oplus \Sigma^{-1}A$ with $\delta_{A'} = 0$, and the corresponding endomorphism dg algebra $\mathrm{End}(A')$ is a matrix algebra (with zero differential). The object $A_t \in \tw(A)$ is given by $A_t = A \oplus \Sigma^{-1}A$ with 
$$\delta_{A_t} = \begin{pmatrix} 0 & xt^n \\ - t & 0 \end{pmatrix}.$$
The corresponding endomorphism dg algebra $\mathrm{End}(A_t)$ is a matrix algebra whose differential is given by the super-commutator with $\delta_{A_t}$.

For $n = 0$, we have a similar description of $A_t$, but this time $A' = A \oplus \Sigma^{-1}A$ is endowed with
$$\delta_{A'} = \begin{pmatrix} 0 & x \\  0 & 0 \end{pmatrix},$$
that is $A' \cong \cone(x)$. Since $x \in A_0$ is invertible (with inverse $x^{-1}$), it is certainly not nilpotent so Theorem \ref{thmmain}(2) does not apply. In fact, $\cone(x)$ is a null-homotopic object in $\tw(A_0)$, a null-homotopy being given by 
$$\delta_{A'} = \begin{pmatrix} 0 & 0 \\  x^{-1} & 0 \end{pmatrix}.$$
Consequently, the endomorphism dg algebra $\mathrm{End}(A')$ is null-homotopic too, and thus quasi-equivalent to the zero category. In particular, $A$ is not Morita equivalent to $\mathrm{End}(A')$, and $\mathrm{End}(A_t)$ does not constitute a deformation of $A$ in any reasonable sense.

Let us show that the element $\phi_{(0)} = x$ is not in the image of the map $\Theta$. Suppose on the contrary that it is in the image. We start from the $cA_{\infty}$-deformation $A = A_0[[t]]$ endowed with $$m = m_2^{(0)} + x\epsilon$$ where $m_2^{(0)}$ is the multiplication of $A_0$. Up to quasi-equivalence, every category $\BBB_0$ Morita-equivalent to $A_0$ can be realized as a full subcategory of the dg model $\Tw_{ut}(A_0)$ of the derived category $D(A_0)$, and for such $\BBB_0$, a $cA_{\infty}$-deformation $\BBB = \BBB_0[[t]]$ of $\BBB_0$ corresponding to a gauge equivalent solution of the Maurer Cartan equation is constructed as follows (see \S \ref{partransp}). For every object $(M, \delta^{(0)}) \in \BBB_0 \subseteq \Tw_{ut}(A_0)$, there is a connection
$$\delta = \sum_{n = 0}^{\infty} \delta^{(n)} t^n$$
for which $(M, \delta)$ is uncurved. We thus have
$$\begin{aligned}
0 = c_M & = m^{(0)}_2(\delta^{(0)}, \delta^{(0)}) \\ 
& + [m^{(0)}_2(\delta^{(0)}, \delta^{(1)}) + m^{(0)}_2(\delta^{(1)}, \delta^{(0)}) + x]t \\
& + \sum_{n = 2}^{\infty} [\sum_{p + q = n} m^{(0)}_2(\delta^{(p)}, \delta^{(q)})]t^n.
\end{aligned}$$
We claim that $(M, \delta^{(0)})$ is null-homotopic. Put
$$h = x^{-1}\delta^{(1)} \in \Hom^{-1}_0(M,M).$$
We have
$$\begin{aligned}
\embr_{\delta^{(0)}}(m^{(0)}_2)(h) & = m^{(0)}_2(\delta^{(0)}, h) + m^{(0)}_2(h, \delta^{(0)}) \\
& = x^{-1}[m^{(0)}_2(\delta^{(0)}, \delta^{1)}) + m_2^{(0)}(\delta^{(1)}, \delta^{(0)})]\\
& = x^{-1} x = 1.
\end{aligned}$$
As before, it then follows that $\BBB_0$ is quasi-equivalent to the zero category, in contradiction with the fact that $\BBB_0$ is Morita equivalent to $A_0$.

Note that from perturbation theory, it follows that $(M, \delta)$ is null-homotopic as well, and the only uncurved deformation one can obtain from (subcategories of) $\Tw_{ut}(A_0)$ is a zero category. For a precise discussion in the case of first order deformations, see \cite{dedekenlowen2}. For other arguments why the only uncurved ``deformation'' of the graded field is the zero category, we refer to \cite{kellerlowennicolas}.

\def\cprime{$'$} \def\cprime{$'$}
\providecommand{\bysame}{\leavevmode\hbox to3em{\hrulefill}\thinspace}
\providecommand{\MR}{\relax\ifhmode\unskip\space\fi MR }
\providecommand{\MRhref}[2]{%
  \href{http://www.ams.org/mathscinet-getitem?mr=#1}{#2}
}
\providecommand{\href}[2]{#2}

\end{document}